\def\ds{\displaystyle}
\def\R{{\mathbb R}}
\def\N{{\mathbb N}}
\def\Z{{\mathbb Z}}
\def\tcut{t_{\operatorname{cut}}}
\def\tt{\mathbf t}
\def\then{\quad\Rightarrow\quad}
\newcommand{\E}{\operatorname{E}\nolimits}
\newcommand{\Exp}{\operatorname{Exp}\nolimits}
\newcommand{\sgn}{\operatorname{sgn}\nolimits}
\newcommand{\am}{\operatorname{am}\nolimits}
\newcommand{\sn}{\operatorname{sn}\nolimits}
\newcommand{\cn}{\operatorname{cn}\nolimits}
\newcommand{\dn}{\operatorname{dn}\nolimits}
\newcommand{\Max}{\operatorname{Max}\nolimits}
\newcommand{\cl}{\operatorname{cl}\nolimits}
\newcommand{\Id}{\operatorname{Id}\nolimits}
\newcommand{\Cut}{\operatorname{Cut}\nolimits}
\newcommand{\Conj}{\operatorname{Conj}\nolimits}
\newcommand{\glob}{\operatorname{glob}\nolimits}
\newcommand{\loc}{\operatorname{loc}\nolimits}
\newcommand{\SE}{\operatorname{SE}\nolimits}
\newcommand{\intt}{\operatorname{int}\nolimits}
\newcommand{\SO}{\operatorname{SO}\nolimits}
\def\a{\alpha}
\def\b{\beta}
\def\g{\gamma}
\def\th{\theta}
\def\lam{\lambda}
\def\f{\varphi}
\def\p{\psi}
\def\eps{\varepsilon}
\def\Del{\Delta}
\def\G{\Gamma}
\def\dq{\dot q}
\def\hk{\widehat k}
\def\wht{\widehat t}
\def\hq{\widehat q}
\def\hlam{\widehat \lambda}
\def\hf{\widehat \f}
\def\hp{\widehat p}
\def\htau{\widehat \tau}
\def\hN{\widehat N}
\def\hM{\widehat M}
\def\tN{\widetilde{N}}
\def\tM{\widetilde{M}}
\def\bnu{\bar{\nu}}
\def\vh{\vec h}
\def\vH{\vec H}
\def\eqdef{:=}
\def\ts{\,{\sn \tau}\,}
\def\tc{\,{\cn \tau}\,}
\def\td{\,{\dn \tau}\,}
\def\ss{\,{\sn  p}\,}
\def\cc{\,{\cn  p}\,}
\def\dd{\,{\dn  p}\,}
\def\tsp{\,{\sn^2 \tau}\,}
\def\tcp{\,{\cn^2 \tau}\,}
\def\ssp{\,{\sn^2  p}\,}
\def\ccp{\,{\cn^2  p}\,}
\def\ddp{\,{\dn^2  p}\,}
\def\ssf{\,{\sn^4  p}\,}
\def\Eo{\,{\E(p)}\,}
\def\bu{\bar{u}}
\def\bv{\bar{v}}
\def\tu{\widetilde{u}}
\def\Ho{$(\mathbf{H1})$ } 
\def\Ht{$(\mathbf{H2})$ } 
\def\Hth{$(\mathbf{H3})$ } 
\def\Hf{$(\mathbf{H4})$ } 
\def\tconj{t_1^{\operatorname{conj}}}
\def\Ncut{N_{\operatorname{cut}}}
\def\Nconj{N_{\operatorname{conj}}}
\def\NMax{N_{\operatorname{Max}}}
\def\Nrest{N_{\operatorname{rest}}}
\def\Mcut{M_{\operatorname{cut}}}
\def\Mconj{M_{\operatorname{conj}}}
\def\MMax{M_{\operatorname{Max}}}
\def\Mrest{M_{\operatorname{rest}}}
\def\pal{p_1^{\a_1}}
\newcommand{\pder}[2]{\frac{\partial \, #1}{\partial \, #2} }
\newcommand{\der}[2]{\frac{d \, #1}{d\, #2} }
\newcommand{\pdder}[2]{\frac{\partial^2 #1}{\partial \, {#2}^2} }
\newcommand{\be}[1]{\begin{equation}\label{#1}}
\newcommand{\ee}{\end{equation}}
\newcommand{\eq}[1]{$(\protect\ref{#1})$}
\newcommand{\map}[3]{#1 \, : \, #2 \to #3}
\newcommand{\ddef}[1]{{#1}}
\newcommand{\hypoth}[2]{\medskip\noindent{#1}{\em #2}\medskip}
\newcommand{\restr}[2]{\left. #1 \right|_{#2}}
\newcommand{\partproof}[1]{\medskip\textbf{(#1)}{\ }}
\newcommand{\arth}{\operatorname{artanh}\nolimits}
\newtheorem{theorem}{Theorem}[section]
\newtheorem{lemma}{Lemma}[section]
\newtheorem{corollary}{Corollary}[section]
\newtheorem{proposition}{Proposition}[section]
\theoremstyle{remark}
\newcommand{\onefiglabel}[3]
{
\begin{figure}[htbp]
\begin{center}
\includegraphics[width=0.5\textwidth]{#1}
\\
\parbox[t]{0.5\textwidth}{\caption{#2}\label{#3}}
\end{center}
\end{figure}
}
\newcommand{\onefiglabelrotate}[4]
{
\begin{figure}[htbp]
\begin{center}
\includegraphics[width=0.5\textwidth, angle = #4]{#1}
\\
\parbox[t]{0.5\textwidth}{\caption{#2}\label{#3}}
\end{center}
\end{figure}
}
\newcommand{\onefiglabelsizen}[4]
{
\begin{figure}[htbp]
\begin{center}
\includegraphics[height=#4cm]{#1}
\\
\parbox[t]{0.7\textwidth}{\caption{#2}\label{#3}}
\end{center}
\end{figure}
}
\newcommand{\twofiglabel}[6]
{
\begin{figure}[htbp]
\includegraphics[width=0.47\textwidth]{#1}
\hfill
\includegraphics[width=0.47\textwidth]{#4}
\\
\parbox[t]{0.45\textwidth}{\caption{#2}\label{#3}}
\hfill
\parbox[t]{0.45\textwidth}{\caption{#5}\label{#6}}
\end{figure}
}
\newcommand{\twofiglabelh}[7]
{
\begin{figure}[htbp]
\includegraphics[height=#7cm]{#1}
\hfill
\includegraphics[height=#7cm]{#4}
\\
\parbox[t]{0.45\textwidth}{\caption{#2}\label{#3}}
\hfill
\parbox[t]{0.45\textwidth}{\caption{#5}\label{#6}}
\end{figure}
}
\newcommand{\twofiglabelsize}[8]
{
\begin{figure}[htbp]
\includegraphics[height=#4cm]{#1}
\hfill
\includegraphics[height=#8cm]{#5}
\\
\hfill
\parbox[t]{0.45\textwidth}{\caption{#2}\label{#3}}
\hfill
\parbox[t]{0.45\textwidth}{\caption{#6}\label{#7}}
\hfill
\end{figure}
}
\newcommand{\twotablabel}[6]
{
\begin{table}[htbp]
\hfill
\parbox[t]{0.45\textwidth}{#1}
\hfill
\parbox[t]{0.45\textwidth}{#4}
\hfill
\\
\hfill
\parbox[t]{0.45\textwidth}{\caption{#2}\label{#3}}
\hfill
\parbox[t]{0.45\textwidth}{\caption{#5}\label{#6}}
\hfill
\end{table}
}
\title{Cut time and optimal synthesis \\ in sub-Riemannian problem \\ on the group of motions of a 
plane\thanks{Work supported by Cariplo Foundation and 
Landau Network --- Centro Volta 2007/2008 Russia/CIS Fellowship
}} 
\author{
Yu. L. Sachkov\\[0.5cm]
Program Systems Institute\\
Pereslavl-Zalessky,  Russia\\
E-mail: sachkov@sys.botik.ru} 
\date{March 4, 2009}
\begin{document}

\maketitle

\begin{abstract}
A solution to the left-invariant sub-Riemannian problem on the group of motions (rototranslations) of a plane $\SE(2)$ is obtained. Local and global optimality of extremal trajectories is characterized.

Lower and upper bounds on the first conjugate time are proved.

The cut time is shown to be equal to the first Maxwell time corresponding to the group of discrete symmetries of the exponential mapping. An explicit global description of the cut locus is obtained. 

Optimal synthesis is described.
\end{abstract}

\bigskip
\noindent
\textbf{\small Keywords:} optimal control, sub-Riemannian geometry, differential-geometric methods, left-invariant problem,  group of motions of a plane, rototranslations, conjugate time, cut time, cut locus

\bigskip
\noindent
\textbf{\small Mathematics Subject Classification:}
49J15, 93B29, 93C10, 53C17, 22E30

\newpage
\tableofcontents

\newpage
\section{Introduction}
This work is devoted to the study of the left-invariant sub-Riemannian problem on the group of motions of a plane. This problem can be stated as follows: given two unit vectors $v_0 = (\cos \th_0, \sin \th_0)$, $v_1 =  (\cos \th_1, \sin \th_1)$ attached respectively at two given points $(x_0, y_0)$, $(x_1, y_1)$ in the plane, one should find an optimal motion in the plane that transfers the vector $v_0$ to the vector $v_1$, see Fig.~\ref{fig:prob_state}. The vector can move forward or backward and rotate simultaneously. The required motion should be optimal in the sense of minimal length in the space $(x,y,\th)$, where $\th$ is the slope of the moving vector. 

\begin{figure}[htbp]
\begin{center}
\includegraphics[width=0.5\textwidth]{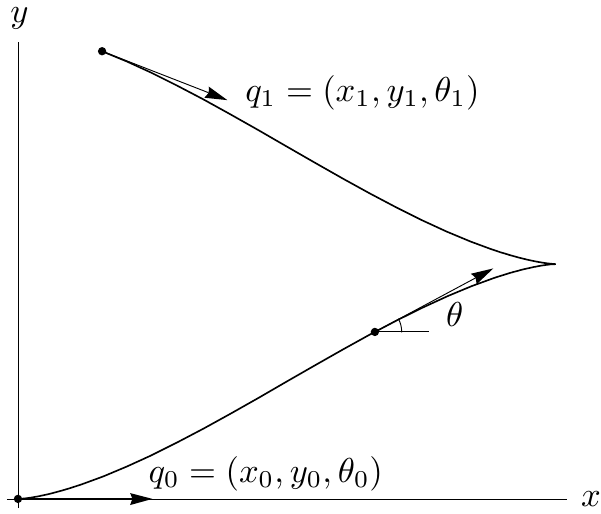}
\\
{\caption{Problem statement}\label{fig:prob_state}}
\end{center}
\end{figure}

The corresponding optimal control problem reads as follows: 
\begin{align}
&\dot x = u_1 \cos{\theta}, \quad  \dot y = u_1 \sin{\theta}, \quad  \dot \theta = u_2, \label{sys1} \\
&q = (x,y,\th) \in M \cong \R^2_{x,y} \times S^1_{\th}, 
\quad u = (u_1, u_2) \in \R^2,
\label{qu} \\
&q(0) = q_0 = (0,0,0), \qquad q(t_1) = q_1 = (x_1,y_1,\th_1),  \label{bound} \\
&l = \int_0^{t_1} \sqrt{u_1^2 + u_2^2} \, d t \to \min, \label{l}\\
\intertext{or, equivalently,}
&J = \frac 12 \int_0^{t_1} (u_1^2 + u_2^2) \, d t \to \min. \label{J}
\end{align}

This work is an immediate continuation of the previous work~\cite{max_sre2}. We use extensively the results obtained in that paper, and now we recall the most important of them. 

Problem~\eq{sys1}--\eq{J} is a left-invariant sub-Riemannian problem on the group of motions of a plane 
$\SE(2) = \R^2 \ltimes \SO(2)$.
The normal Hamiltonian system of
Pontryagin Maximum Principle   becomes triangular in appropriate coordinates on cotangent bundle $T^*M$, and its vertical subsystem is the equation of mathematical pendulum:
\begin{align}
&\dot \g = c, \quad \dot c = -\sin \g, \qquad (\g, c) \in C \cong (2 S^1_{\g}) \times \R_c, \label{ham_vert}\\
&\dot x = \sin \frac{\g}{2} \cos \th, \quad \dot y = \sin \frac{\g}{2} \sin \th, \quad \dot \th = - \cos \frac{\g}{2}. \label{ham_hor}
\end{align}
In elliptic coordinates $(\f, k)$, where $\f$ is the phase, and $k$ a reparametrized energy of pendulum~\eq{ham_vert}, the system ~\eq{ham_vert}, \eq{ham_hor} was integrated in 
Jacobi's functions~\cite{whit_watson}.
The equation of pendulum~\eq{ham_vert} has a discrete group of symmetries 
$G = \{ \Id, \eps^1, \dots, \eps^7\} = \Z_2 \times \Z_2 \times \Z_2$ generated by reflections in the axes of coordinates $\g$, $c$, and translations $(\g,c) \mapsto(\g+2 \pi, c)$. Action of the group $G$ is naturally extended to extremal trajectories $(x_t, y_t)$, this action modulo rotations is represented at Figs.~\ref{fig:eps1xy}--\ref{fig:eps5xy}.

\begin{figure}[htbp]
\includegraphics[height=3cm]{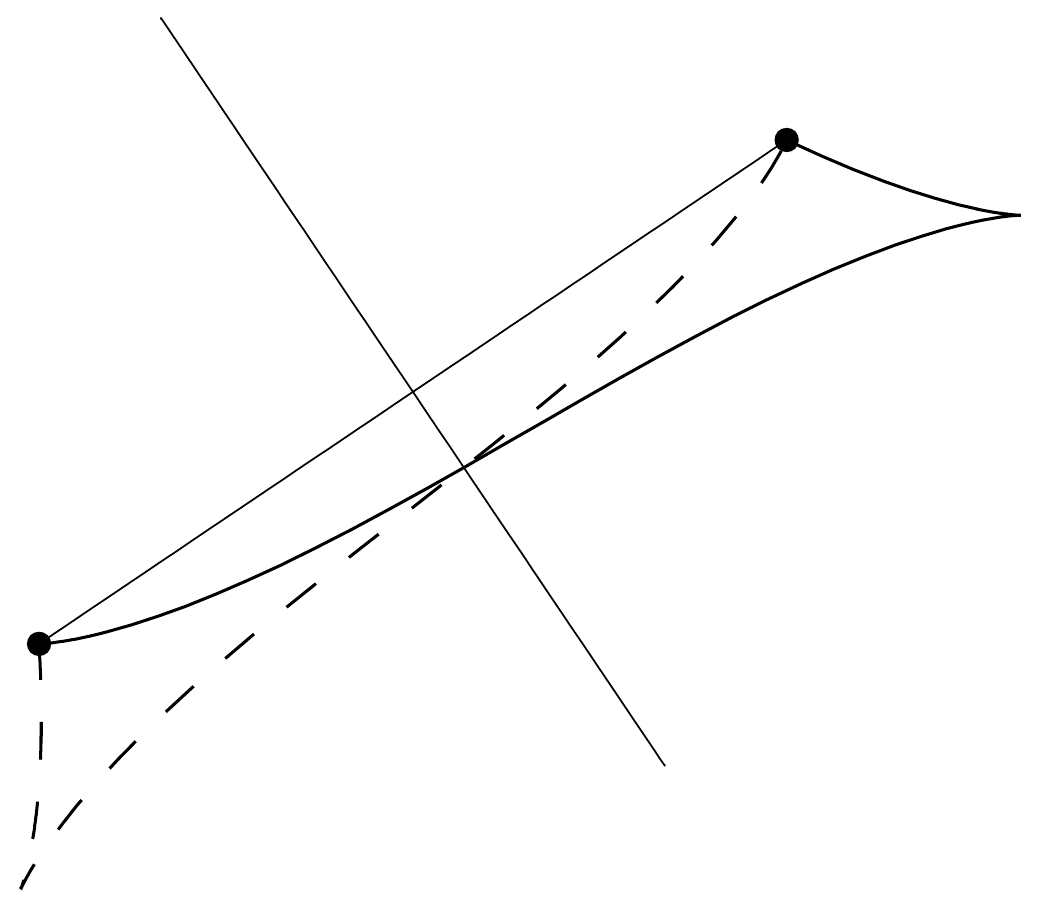}
\hfill
\includegraphics[height=2.5cm]{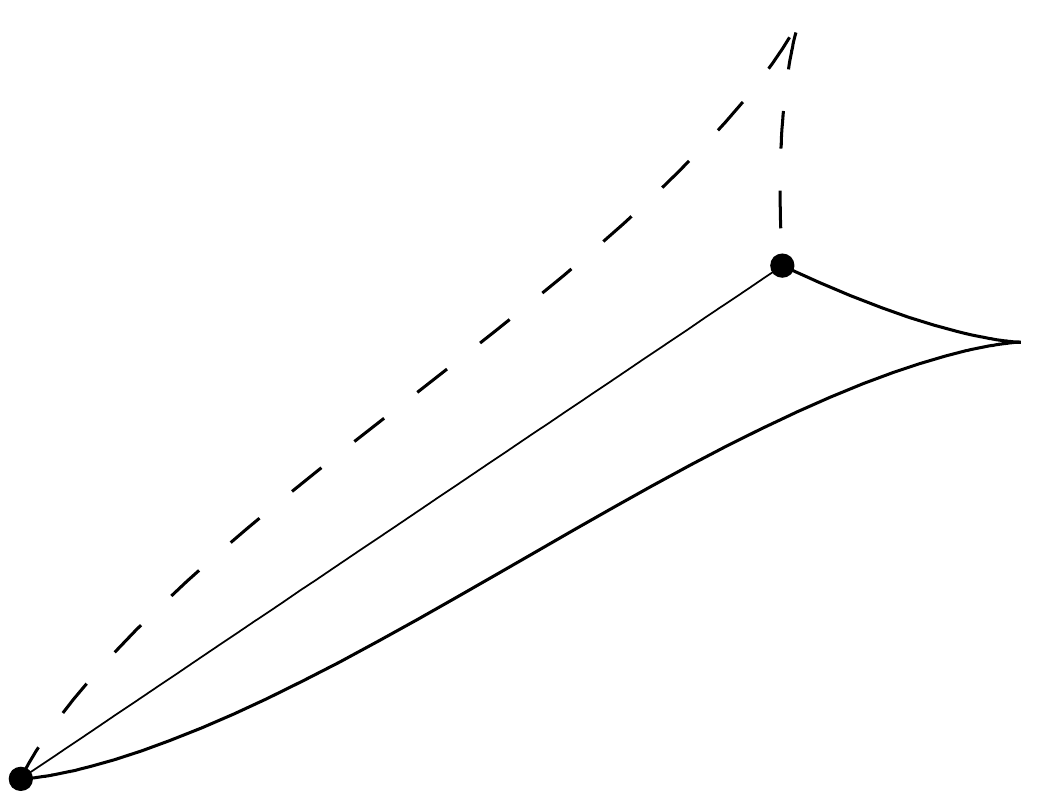}
\hfill
\includegraphics[height=1.7cm]{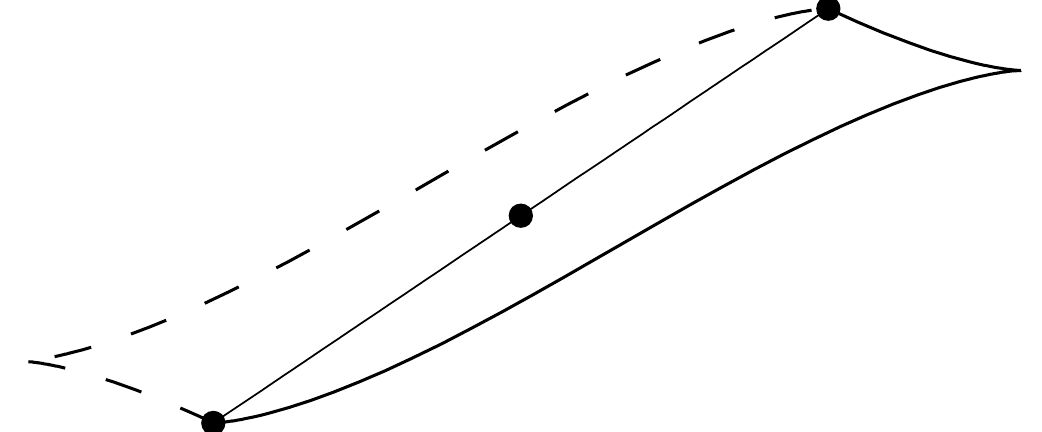}
\\
\parbox[t]{0.27\textwidth}{\caption{Action of $\eps^1$, $\eps^2$ on $(x_t, y_t)$}\label{fig:eps1xy}}
\hfill
\parbox[t]{0.27\textwidth}{\caption{Action of $\eps^4$, $\eps^7$ on $(x_t, y_t)$}\label{fig:eps4xy}}
\hfill
\parbox[t]{0.27\textwidth}{\caption{Action of $\eps^5$, $\eps^6$ on $(x_t, y_t)$}\label{fig:eps5xy}}
\end{figure}

Reflections $\eps^i$ are  symmetries of the exponential mapping $\map{\Exp}{N = C \times \R_+}{M}$, $\Exp(\lam,t) = q_t$.

The main result of work~\cite{max_sre2} is an upper bound on cut time 
$$
\tcut = \sup \{ t_1 > 0 \mid q_s \text{ is optimal for } s \in [0,t_1]\}
$$
along extremal trajectories $q_s$. It is  based on the fact that a sub-Riemannian geodesic cannot be optimal after a Maxwell point, i.e., a point where two distinct geodesics of equal sub-Riemannian length meet one another. A natural idea is  to look for Maxwell points corresponding to discrete symmetries of the exponential mapping. For each extremal trajectory $q_s = \Exp(\lam,s)$, we described Maxwell times $t_{\eps^i}^n(\lam)$, $i = 1, \dots, 7$, $n = 1, 2, \dots$, corresponding to discrete symmetries $\eps^i$. The following upper bound is the main result of work~\cite{max_sre2}:
\be{tcutbound}
\tcut(\lam) \leq \tt(\lam), \qquad \lam \in C,
\ee
where $\tt(\lam) = \min(t_{\eps^i}^1(\lam))$ is the first Maxwell time corresponding to the group of symmetries $G$. We recall the explicit definition of the function $\tt(\lam)$ below in Eqs.~\eq{ttC1}--\eq{ttC5}. 

In this work we obtain a complete solution to problem~\eq{sys1}--\eq{J}. 

First we study the local optimality of sub-Riemannian geodesics (Section~\ref{sec:conj}). 
We show that extremal trajectories corresponding to oscillating pendulum~\eq{ham_vert} do not have conjugate points, thus they are locally optimal forever. In the case of rotating pendulum we prove that the first conjugate time is bounded from below and from above by the first Maxwell times $t_{\eps^2}^1$ and $t_{\eps^5}^1$ respectively. For critical values of energy of the pendulum, there are no conjugate points. 

In Sections~\ref{sec:cut}--\ref{sec:sol} we study the global optimality of geodesics. We construct decompositions of the preimage and image of exponential mapping into smooth stratas of dimensions $0, \dots, 3$ and prove that the exponential mapping transforms these stratas diffeomorphically. As a consequence, we show that inequality~\eq{tcutbound} is in fact an equality. Further, we describe explicitly the Maxwell strata and cut locus in the problem. For some special terminal points $q_1$, we provide explicit optimal solutions. Finally, we present plots of the conjugate and cut loci, and of sub-Riemannian spheres.

\section{Conjugate points}
\label{sec:conj}

In this section we obtain bounds on conjugate time in the sub-Riemannian problem on $\SE(2)$, see Th.~\ref{th:conjC}.

\subsection{General facts}

First we recall some known facts from the theory of conjugate points in optimal control problems. For details see, e.g., \cite{notes, cime, sar}.

Consider an optimal control problem of the form
\begin{align}
&\dq = f(q,u), \qquad q \in M, \quad u \in U \subset \R^m, \label{sys} \\
&q(0) = q_0, \qquad q(t_1) = q_1, \qquad t_1 \text{ fixed}, \label{bound1} \\
&J = \int_0^{t_1} \f(q(t),u(t)) \, dt \to \min, \label{J1}
\end{align}
where $M$ is a  finite-dimensional analytic manifold,  $f(q,u)$ and $\f(q,u)$ are respectively analytic in $(q,u)$  families of vector fields   and   functions  on $M$ depending on the control parameter $u \in U$, and $U$ an open subset of $\R^m$. Admissible controls are $u(\cdot) \in L_{\infty}([0, t_1],U)$, and admissible trajectories $q(\cdot)$ are Lipschitzian.
Let 
$$
h_u(\lam) = \langle \lam, f(q,u)\rangle - \f(q,u), 
\qquad \lam \in T^*M, \quad q = \pi(\lam) \in M, \quad u \in U,
$$
be the \ddef{normal Hamiltonian of PMP} for the problem~\eq{sys}--\eq{J1}. Fix a triple $(\tu(t), \lam_t, q(t))$ consisting of a normal extremal control $\tu(t)$, the corresponding extremal $\lam_t$, and the extremal trajectory $q(t)$ for the problem~\eq{sys}--\eq{J1}.

Let the following hypotheses hold:

\hypoth{\Ho}{For all $\lam \in T^*M$ and  $u \in U$, the quadratic form $\ds \pdder{h_u}{u}(\lam)$ is negative definite.}

\hypoth{\Ht}
{For any $\lam \in T^* M$, the function $u \mapsto h_u(\lam)$, $u \in U$, has a maximum point $\bu(\lam) \in U$:
$$
h_{\bu(\lam)}(\lam) = \max_{u \in U} h_u(\lam), \qquad \lam \in T^*M.
$$}%

\hypoth{\Hth}
{The extremal control $\tu(\cdot)$ is a corank one critical point of the endpoint mapping.}%

\hypoth{\Hf}
{All trajectories of the Hamiltonian vector field $\vH(\lam)$, $\lam \in T^*M$, are continued for $t \in [0, +\infty)$. 
}

An instant $t_* > 0$ is called a \ddef{conjugate time} (for the initial instant $t = 0$) along the extremal $\lam_t$ if the restriction of the second variation of the endpoint mapping to the kernel of its first variation   is degenerate, see~\cite{notes} for details. In this case the point $q(t_*) = \pi(\lam_{t_*})$ is called \ddef{conjugate} for the initial point  $q_0$ along the extremal trajectory $q(\cdot)$.

Under hypotheses \Ho--\Hf, we have the following:

\begin{enumerate}
\item
Normal extremal trajectories lose their local optimality (both strong and weak) at the first conjugate point, see~\cite{notes}.
\item
An instant $t > 0$ is a conjugate time iff the exponential mapping $\Exp_t = \pi \circ e^{t \vH}$ is degenerate, see~\cite{cime}.
\item
Along each normal extremal trajectory, conjugate times are isolated one from another, see~\cite{sar}.
\end{enumerate}

We will apply  the following statement for the proof of absence of conjugate points via homotopy.

\begin{proposition}[Corollaries 2.2, \ 2.3~\cite{el_conj}]\label{propos:cor2.2-2.3}
Let $(u^s(t), \lam^s_t)$, $t \in [0, + \infty)$, $s \in [0, 1]$, be a continuous in parameter $s$ family of normal extremal pairs in the optimal control problem~\eq{sys}--\eq{J1} satisfying hypotheses \Ho--\Hf.

\begin{itemize}
\item[$(1)$]
Let $s \mapsto t_1^s$ be a continuous function, $s \in [0, 1]$, $t_1^s \in (0, + \infty)$.
Assume that for any $s \in [0, 1]$ the instant $t = t_1^s$ is not a conjugate time along the extremal $\lam_t^s$.

If the extremal trajectory $q^0(t) = \pi(\lam_t^0)$, $t \in (0, t_1^0]$, does not contain conjugate points, then the extremal trajectory $q^1(t) = \pi(\lam_t^1)$, $t \in (0, t_1^1]$, also does not contain conjugate points.

\item[$(2)$]
Let for any $s \in [0, 1]$ and $T > 0$ the extremal $\lam_t^s$ have no conjugate points for $t \in (0, T]$.
Then for any $T> 0$, the extremal $\lam_t^1$ also has no conjugate points for $t \in (0, T]$.
\end{itemize}
\end{proposition}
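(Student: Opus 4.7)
The plan is to use the Morse-index characterization of conjugate times. Under hypotheses \Ho--\Hf, the Hessian of the second variation of the endpoint-constrained cost along the extremal pair $(u^s|_{[0, t]}, \lam^s|_{[0, t]})$ is Fredholm, has finite Morse index, and this index jumps precisely at conjugate times by the amount of the conjugate multiplicity. Being integer-valued and continuous in $(s,t)$ off the conjugate locus, the Morse index is locally constant there. This reduces the question about degeneracy of $\Exp_t$ (fact~(2) in the excerpt) to the constancy of an integer invariant under a continuous homotopy.

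For part $(1)$, introduce $N(s) = \operatorname{ind}(u^s|_{[0, t_1^s]}, \lam^s|_{[0, t_1^s]})$ for $s \in [0, 1]$. The curve $s \mapsto (s, t_1^s)$ avoids the conjugate locus by hypothesis, so $N$ is locally constant, hence constant on $[0,1]$. From the hypothesis on $\lam^0$ (no conjugate points in $(0, t_1^0]$), the Hessian at $s = 0$ is positive definite, so $N(0) = 0$; therefore $N(1) = 0$. This yields no conjugate times in $(0, t_1^1)$, and together with the hypothesis that $t_1^1$ itself is not conjugate along $\lam^1$, the extremal trajectory $q^1(t)$ has no conjugate points in $(0, t_1^1]$. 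Part $(2)$ then follows from part $(1)$: for each fixed $T > 0$, apply part $(1)$ with the constant function $t_1^s \equiv T$, which is admissible since $T$ is not conjugate along any $\lam^s$ by the hypothesis of part $(2)$; letting $T$ range over $(0, +\infty)$ gives the conclusion.

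The main obstacle is justifying the continuity of the Hessian in the family and the local constancy of its Morse index. This requires showing that the second variation is Fredholm with continuous dependence on $(s, t)$ in the operator norm — which relies on the strict Legendre condition \Ho\ and the corank-one assumption \Hth\ — and verifying that at a conjugate time the kernel of the Hessian is exactly the kernel of $d\Exp_t$, so that the integer jump of $N$ matches the conjugate multiplicity. Hypothesis \Hf\ is needed so that the extremals and their Jacobi fields exist for all $t$, allowing the homotopy to be carried out globally; fact~(3), the isolation of conjugate times along each fixed extremal, ensures that $N$ has only finitely many jumps along any compact subsegment, keeping the integer-valued argument meaningful.
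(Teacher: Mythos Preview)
The paper does not give its own proof of this proposition; it is quoted verbatim as Corollaries~2.2--2.3 of the cited reference~\cite{el_conj} and used as a black box. Your Morse-index homotopy argument is exactly the approach taken in that reference: under \Ho--\Hf\ the second variation is Legendre-strong and Fredholm, its negative index on $[0,t]$ counts conjugate times in $(0,t)$ with multiplicity, and this integer is locally constant in $(s,t)$ away from the conjugate set. So your outline for part~(1) is correct and matches the source.

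One remark on part~(2). As printed in the paper the hypothesis ranges over $s\in[0,1]$, which makes the conclusion tautological; the intended hypothesis (and the way the proposition is actually applied in Theorem~\ref{th:conjC345}) is $s\in[0,1)$. With that reading, your reduction ``take $t_1^s\equiv T$'' needs one extra step: you must know that $T$ is not conjugate along $\lam^1$, which is not part of the hypothesis. The fix is to argue by contradiction: if $\lam^1$ had a conjugate time $t_*$, use isolation of conjugate times (fact~(3)) to pick $T>t_*$ non-conjugate along $\lam^1$; then $t_1^s\equiv T$ satisfies the hypothesis of part~(1) for every $s\in[0,1]$, and part~(1) forces no conjugate times for $\lam^1$ on $(0,T]$, contradicting $t_*\le T$.
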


One easily checks that the sub-Riemannian problem~\eq{sys1}--
\eq{J} satisfies all hypotheses \Ho--\Hf, so the results cited in this subsection are applicable to this problem.

We denote the first conjugate time along an extremal trajectory $q(t) = \Exp(\lam,t)$ as $\tconj(\lam)$.

\subsection{Conjugate points for the case of oscillating pendulum}
\label{subsec:conjC1}
In this subsection we assume that $\lam \in C_1$ and prove that the corresponding extremal trajectories do not contain conjugate points, see Th.~\ref{th:conjC1}.

Using the parametrization of extremal trajectories obtained in Subsec.~3.3~\cite{max_sre2}, we compute explicitly Jacobian of the exponential mapping:
\begin{align}
&J = \frac{\partial(x_t, y_t, \th_t)}{\partial(t, \f, k)} = 
\frac{4}{k^3 (1-k^2) (1 - k^2 \ssp \tsp)} J_1, \nonumber\\
&p = t/2, \qquad \tau = \f + t/2, \label{ptauC1}\\
&J_1(\tau, p, k) = v_1 \tsp + v_2 \tcp, \label{J1v12}\\
&v_1 = (1-k^2) (p - \Eo)(\Eo - (1 - k^2)p), \nonumber\\
&v_2 = (p - \Eo)(\Eo - (1 - k^2)p)+k^2 \cc \dd (2\Eo + (k^2-2)p)\ss \nonumber\\
&\qquad + k^2((\Eo - p)(\Eo-(1-k^2)p)-k^2 )\ssp + k^4 \ssf, \nonumber
\end{align}
so that $\sgn J = \sgn J_1$.

\subsubsection{Preliminary lemmas}

\begin{lemma}
\label{lem:v1}
For any $k \in (0,1)$ and $p> 0$ we have $v_1(p,k) > 0$.
\end{lemma}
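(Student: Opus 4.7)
The plan is to unpack the definition of $v_1$ and exploit the integral representation of the Jacobi epsilon function $\E(p) = \int_0^p \dn^2(u,k)\, du$. Since the factor $(1-k^2)$ is strictly positive for $k \in (0,1)$, the claim reduces to showing that both scalar factors
\[
A(p,k) := p - \E(p), \qquad B(p,k) := \E(p) - (1-k^2)p
\]
are strictly positive whenever $p > 0$ and $k \in (0,1)$.

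For $A$, I would write
\[
A(p,k) = \int_0^p \bigl(1 - \dn^2(u,k)\bigr)\, du = k^2 \int_0^p \sn^2(u,k)\, du,
\]
using the fundamental identity $\dn^2 + k^2 \sn^2 = 1$. For $k \in (0,1)$ the integrand $k^2 \sn^2(u,k)$ is nonnegative and vanishes only on the discrete set $\{u : \sn(u,k) = 0\}$, so the integral is strictly positive for every $p > 0$. Similarly, for $B$ I would write
\[
B(p,k) = \int_0^p \bigl(\dn^2(u,k) - (1-k^2)\bigr)\, du = k^2 \int_0^p \cn^2(u,k)\, du,
\]
using $\dn^2 = 1 - k^2 + k^2 \cn^2$ (equivalently $\dn^2 - (1-k^2) = k^2\cn^2$). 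Again the integrand is nonnegative and vanishes only on a discrete set for $k \in (0,1)$, so $B(p,k) > 0$ for every $p > 0$.

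Combining, $v_1 = (1-k^2)\cdot A(p,k)\cdot B(p,k) > 0$ on the stated range, which is what we want. There is no real obstacle here; the only subtlety is to choose the algebraically cleanest rewriting of $1 - \dn^2$ and $\dn^2 - (1-k^2)$ in terms of $\sn^2$ and $\cn^2$ so that both sign statements become transparent, rather than trying to analyze the two factors $(p - \E(p))$ and $(\E(p) - (1-k^2)p)$ by ad hoc estimates.
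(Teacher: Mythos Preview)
Your proof is correct and follows essentially the same approach as the paper: the paper's proof consists precisely of the two integral identities $p-\E(p)=k^2\int_0^p \sn^2 t\,dt>0$ and $\E(p)-(1-k^2)p=k^2\int_0^p \cn^2 t\,dt>0$, from which positivity of $v_1$ is immediate.
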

\begin{proof}
The statement follows from the relations 
\be{p-E>0}
p - \Eo = k^2 \int_0^p \sn^2 t \, dt > 0, 
\qquad 
\Eo - (1-k^2) p  = k^2 \int_0^p \cn^2 t \, dt > 0.
\ee
\end{proof}

\begin{lemma}
\label{lem:v2}
For any $k \in (0,1)$, $n \in \N$, and $\tau \in \R$ we have $J_1(\tau, 2 Kn,k) > 0$.
\end{lemma}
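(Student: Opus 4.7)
The proof is essentially an evaluation at a specific point where the formulas collapse. My plan is as follows.

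First I would substitute $p = 2Kn$ directly into the expression for $J_1(\tau, p, k) = v_1 \sn^2 \tau + v_2 \cn^2 \tau$. The key simplification comes from the fact that $\sn(2Kn) = 0$ for every $n \in \N$ (since $\sn$ has period $4K$ and vanishes at $2K$). Looking at the formula
\begin{align*}
v_2 &= (p - \E(p))(\E(p) - (1 - k^2)p) + k^2 \cn p \dn p (2\E(p) + (k^2-2)p)\sn p \\
& \quad + k^2\bigl((\E(p) - p)(\E(p)-(1-k^2)p)-k^2\bigr)\sn^2 p + k^4 \sn^4 p,
\end{align*}
every term after the first carries a factor of $\sn p$, so at $p = 2Kn$ the expression collapses to
\[
v_2(2Kn, k) = (p - \E(p))(\E(p) - (1 - k^2)p)\bigr|_{p = 2Kn}.
\]

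Next I would invoke the inequalities $(2.4)$ established in the proof of Lemma~\ref{lem:v1}, namely $p - \E(p) > 0$ and $\E(p) - (1 - k^2)p > 0$ for $p > 0$ and $k \in (0,1)$. These give $v_2(2Kn, k) > 0$; Lemma~\ref{lem:v1} itself already gives $v_1(2Kn, k) > 0$. Since $\sn^2 \tau$ and $\cn^2 \tau$ are nonnegative and their sum equals $1$, the combination $v_1 \sn^2 \tau + v_2 \cn^2 \tau$ is a strictly positive convex combination of two strictly positive numbers, and we conclude $J_1(\tau, 2Kn, k) > 0$ for all $\tau \in \R$.

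There is essentially no obstacle here — the statement is designed exactly so that at the special times $p = 2Kn$ (which correspond to the Maxwell times for the oscillating-pendulum stratum) all the awkward $\sn$-containing terms in $v_2$ drop out and one is left with two manifestly positive coefficients. The only point requiring any care is checking that $\sn(2Kn) = 0$, which is standard for Jacobi elliptic functions; the sign analysis itself is then a one-line consequence of Lemma~\ref{lem:v1} (and its proof).
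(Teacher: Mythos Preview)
Your proof is correct and follows exactly the paper's own argument: substitute $p=2Kn$ so that $\sn p=0$ collapses $v_2$ to $(p-\E(p))(\E(p)-(1-k^2)p)>0$ by the inequalities~\eq{p-E>0}, then combine with $v_1>0$ from Lemma~\ref{lem:v1} in the decomposition $J_1=v_1\sn^2\tau+v_2\cn^2\tau$. The only nit is the phrase ``strictly positive convex combination'' --- one of the weights $\sn^2\tau,\cn^2\tau$ may vanish, but since both $v_1,v_2>0$ and the weights sum to $1$, the conclusion $J_1>0$ still holds.
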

\begin{proof}
If $p = 2 Kn$, $n \in \N$, then $v_2(p,k) = (p - \Eo)(\Eo-(1 - k^2)p)   > 0$ by inequalities~\eq{p-E>0}.

By virtue of Lemma~\ref{lem:v1} and decomposition~\eq{J1v12}, we obtain the inequality $J_1(\tau, 2 Kn,k) > 0$.
\end{proof}

\begin{lemma}
\label{lem:Jk->0}
$\forall \ p_1 > 0 \quad \exists \  \hk \in(0,1) \quad \forall \ k \in(0, \hk) \quad \forall p \in(0, p_1) \quad \forall \tau \in \R$ $J_1(\tau, p, k) > 0$.
\end{lemma}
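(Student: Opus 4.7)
\medskip
\noindent\textbf{Plan of proof.}
The key observation is that the identity $\sn^2\tau+\cn^2\tau=1$ turns decomposition~\eq{J1v12} into a convex combination,
\[
J_1(\tau,p,k) = v_1(p,k)\,\sn^2\tau + v_2(p,k)\,\cn^2\tau
\;\geq\; \min\bigl(v_1(p,k),\,v_2(p,k)\bigr).
\]
Since Lemma~\ref{lem:v1} already supplies $v_1(p,k)>0$, the task reduces to proving that $v_2(p,k)>0$ for all $p\in(0,p_1)$ once $k$ is sufficiently small; the $\tau$-dependence has been eliminated.

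Next I would extract the small parameter from $v_2$. Using $p-\E(p)=k^2\int_0^p\sn^2 t\,dt$ and $\E(p)-(1-k^2)p=k^2\int_0^p\cn^2 t\,dt$ from~\eq{p-E>0}, together with $2\E(p)+(k^2-2)p=k^2\bigl(p-2\int_0^p\sn^2 t\,dt\bigr)$ and $1-k^2\sn^2 p=\dn^2 p$, a short rearrangement of the defining expression for $v_2$ yields $v_2(p,k)=k^4\phi(p,k)$, where
\[
\phi(p,k) = \dn^2 p\,\Bigl(\int_0^p\sn^2 t\,dt\Bigr)\Bigl(\int_0^p\cn^2 t\,dt\Bigr)
+ \sn p\,\cn p\,\dn p\,\Bigl(p - 2\int_0^p\sn^2 t\,dt\Bigr) - \sn^2 p\,\cn^2 p.
\]
Setting $k=0$ collapses the Jacobi functions to $\sin,\cos,1$, and the elementary identity $p-2\int_0^p\sin^2 t\,dt = \sin p\cos p$ makes the last two terms cancel, leaving
\[
\phi(p,0) = \Bigl(\int_0^p\sin^2 t\,dt\Bigr)\Bigl(\int_0^p\cos^2 t\,dt\Bigr) > 0 \qquad \text{for every } p > 0.
\]

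The final step promotes pointwise to uniform positivity by a compactness argument. Continuity of $\sn$, $\cn$, $\dn$ and $\E$ in the modulus makes $\phi$ jointly continuous on $[0,p_1]\times[0,1/2]$. For any $\delta\in(0,p_1)$ the restriction of $\phi(\cdot,0)$ to $[\delta,p_1]$ has a positive infimum; by joint continuity this persists on $[\delta,p_1]\times[0,\hat k_\delta]$ for some $\hat k_\delta>0$. The delicate strip $(0,\delta)$ I would handle by rescaling: from the expansions $\int_0^p\sn^2 t\,dt=p^3/3+O(p^5)$ and $\int_0^p\cn^2 t\,dt=p+O(p^3)$ (with error terms uniform in $k$), the ratio $\phi(p,k)/p^4$ extends continuously to $(0,0)$ with value $1/3$, and hence stays bounded away from $0$ on some $[0,\delta]\times[0,\hat k_0]$. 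Taking $\hat k=\min(\hat k_\delta,\hat k_0)$ concludes the argument. The one real obstacle is the uniform-in-$k$ Taylor control of the Jacobi integrals as $p\to 0$, but this is a routine consequence of the joint analytic dependence of these functions on $(p,k)$.
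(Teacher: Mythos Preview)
Your proof is correct, and the underlying mechanism is the same as the paper's: both show that $J_1/k^4$ has a positive, $\tau$-independent limit as $k\to 0$ (for fixed $p>0$), handle the degeneration $p\to 0$ separately via the $p^4/3$ asymptotics, and conclude by a compactness/contradiction argument. Indeed, your limit $\phi(p,0)=(\int_0^p\sin^2 t\,dt)(\int_0^p\cos^2 t\,dt)$ is precisely $\tfrac{1}{16}(4p^2-\sin^2 2p)$, which is the coefficient in the paper's Taylor expansion~\eq{J1k->0}, and your $\phi(p,k)/p^4\to 1/3$ matches~\eq{J1kp->0}.

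Where you differ from the paper is in the route to these expansions. The paper simply asserts~\eq{J1k->0} and~\eq{J1kp->0} and runs a sequential contradiction argument. You instead first eliminate $\tau$ \emph{structurally}: the convex-combination bound $J_1\geq\min(v_1,v_2)$ together with Lemma~\ref{lem:v1} reduces the question to $v_2>0$, and then your explicit factorisation $v_2=k^4\phi(p,k)$ makes transparent both why the leading term is of order $k^4$ and why the $\tau$-dependence vanishes in the limit (namely, $v_1$ and $v_2$ share the same leading coefficient). This is a cleaner and more self-contained derivation; the paper's version is terser but relies on the reader trusting the stated expansions.
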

\begin{proof}
The statement of the lemma follows from the Taylor expansions:
\begin{align}
&J_1 = \frac{k^4}{16}(4p^2 - \sin^2 2p) + o(k^4), \qquad k \to 0, 
\label{J1k->0} \\
&J_1 = \frac{1}{3}k^4 p^4 + o(k^2+p^2)^4, \qquad k^2 + p^2 \to 0. 
\label{J1kp->0}
\end{align}
By contradiction, if the statement is not verified, then there exists a converging sequence $(\tau_n, p_n, k_n) \to (\tau_0, p_0, 0)$ such that $J(\tau_n, p_n, k_n) \leq 0$ for all $n \in N$.
If $p_0 > 0$, then a standard calculus argument yields contradiction with~\eq{J1k->0}. And if $p_0=0$, then similarly one obtains a contradiction with~\eq{J1kp->0}.
\end{proof}

\subsubsection{Absence of conjugate points in $C_1$}
\begin{theorem}
\label{th:conjC1}
If $\lam \in C_1$, then the extremal trajectory $q(t) = \Exp(\lam,t)$, $t>0$, does not contain conjugate points.
\end{theorem}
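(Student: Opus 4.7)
The plan is to invoke Proposition~\ref{propos:cor2.2-2.3}(1) together with Lemmas~\ref{lem:v2} and~\ref{lem:Jk->0}, exploiting the fact that conjugate times along $\Exp(\lam,\cdot)$ are precisely the instants at which $J$, and hence $J_1$, vanishes. Fix $\lam \in C_1$ with modulus $k_0 \in (0,1)$ and an arbitrary horizon $T > 0$; it suffices to prove that no $t \in (0,T]$ is a conjugate time, since then letting $T \to +\infty$ gives the theorem. Choose an integer $n$ with $4 K(k_0)\, n > T$.

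Build a continuous family $s \mapsto \lam^s \in C_1$, $s \in [0,1]$, with $\lam^1 = \lam$, along which the modulus $k^s$ decreases monotonically from $k_0$ at $s=1$ to a small value $k_*$ at $s=0$ (the phase $\f^s$ may be deformed linearly, or kept fixed — it is irrelevant for what follows). Define
\be{tsHom}
t_1^s := 4 K(k^s)\, n, \qquad s \in [0,1],
\ee
which is continuous in $s$. In the coordinates~\eq{ptauC1} this corresponds to $p = 2K(k^s)\,n$, so by Lemma~\ref{lem:v2} we have $J_1 > 0$ at the endpoint $t = t_1^s$ of each trajectory in the family; hence $t_1^s$ is never a conjugate time along $\lam^s$, verifying the key hypothesis of Proposition~\ref{propos:cor2.2-2.3}(1).

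For the base of the homotopy, apply Lemma~\ref{lem:Jk->0} with $p_1 := 2 K(k_0)\, n + 1$: there exists $\hk \in (0,1)$ such that $J_1(\tau,p,k) > 0$ for every $\tau \in \R$, every $p \in (0, p_1)$ and every $k \in (0, \hk)$. Choose $k_* < \hk$. Then along $\lam^0$ the Jacobian does not vanish for $p \in (0, 2K(k_*) n] \subset (0, p_1)$, i.e.\ for $t \in (0, t_1^0]$, so the extremal trajectory $q^0(t)$ contains no conjugate points on this interval. Proposition~\ref{propos:cor2.2-2.3}(1) transports the absence of conjugate points from $\lam^0$ to $\lam^1 = \lam$ on $(0, t_1^1]$, and since $t_1^1 = 4 K(k_0)\, n > T$, the interval $(0,T]$ is covered.

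The only real subtlety is engineering a single terminal time $t_1^s$ which is simultaneously $(i)$ uniformly non-conjugate along the whole family (delivered by tying $t_1^s$ to the pendulum period $4K(k^s)n$ and invoking Lemma~\ref{lem:v2}), $(ii)$ large enough at $s=1$ to dominate the arbitrary horizon $T$ (by taking $n$ sufficiently large), and $(iii)$ bounded enough over the homotopy so that a single choice of small $k_*$ handled by Lemma~\ref{lem:Jk->0} suffices on the whole interval $(0, t_1^0]$ (by using the monotonicity of $K(\cdot)$ on $(0,1)$ to bound $t_1^s$ uniformly in $s$). No direct global control of the sign of $v_2$ is needed.
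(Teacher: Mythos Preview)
Your proof is correct and follows essentially the same route as the paper: homotopy in the modulus $k$, with terminal time tied to the pendulum period so that $p = 2K(k^s)n$, Lemma~\ref{lem:v2} guaranteeing the endpoint is never conjugate, Lemma~\ref{lem:Jk->0} furnishing a conjugate-free base trajectory for small modulus, and Proposition~\ref{propos:cor2.2-2.3}(1) propagating the conclusion. The only cosmetic difference is that the paper parametrizes $k^s = s\hat k$ over $[0,1]$ and then restricts to $[s_0,1]$ with $s_0$ chosen via Lemma~\ref{lem:Jk->0}, whereas you fix the small endpoint $k_*$ in advance; the content is identical.
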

\begin{proof}
We choose any $\hlam \in C_1$, $\wht > 0$, and prove that the extremal trajectory $\hq(t) = \Exp(\hlam, t)$ does not contain conjugate points for $t \in (0, \wht\ ]$.

Find the elliptic coordinates $(\hk, \hf)$ corresponding to the covector $\hlam \in C_1$ according to Subsec.~3.2~\cite{max_sre2}, and let $\hp = \wht /2$, $\htau = \hf + \hp$. Find $n \in \N$ such that $p_1 = 2 K(\hk) n > \hp$. Choose the following continuous curve in the plane $(k,p)$:
$$
\{(k^s, p^s) \mid s \in [0, 1]\}, \qquad k^s = s \hk, \quad p^s = 2 K(k^s) n,
$$
with the endpoints $(k^0, p^0) = (0, \pi n)$ and $(k^1, p^1) = (\hk, 2 K(\hk) n)$.

Consider the following family of extremal trajectories:
\begin{align*}
&\g^s = \{q^s(t) = \Exp(\f^s, k^s, t) \mid t \in [0, t^s]\}, \qquad s \in [0, 1],\\
&t^s = 2 p^s, \qquad \f^s = \htau - p^s.
\end{align*}

The endpoint $q^s(t^s)$ of each trajectory $\g^s$, $s \in [0, 1]$, corresponds to the values of parameters $(\tau, p, k) = (\htau, 2 K(k^s)n, k^s)$. Thus Lemma~\ref{lem:v2} implies that for any $s \in [0, 1]$ the endpoint $q^s(t^s)$ is not a conjugate point.

Further, Lemma~\ref{lem:Jk->0} states that 
\be{Jtaupk0}
\exists \ k_0 \in (0, \hk) \quad \forall \tau \in \R \quad \forall \ p \in(0, p_1) \qquad J(\tau, p, k) > 0.
\ee
Denote $s_0 = k_0/\hk \in(0,1)$, so that $k^{s_0} = k_0$. Condition~\eq{Jtaupk0} means that the extremal trajectory $\g^{s_0}$ does not contain conjugate points for all $t \in [0, t^{s_0}]$.

Then Proposition~\ref{propos:cor2.2-2.3} yields that for any $s \in [s_0, 1]$, the extremal trajectory $q^s(t)$ does not contain conjugate points for all $t \in [0, t^s]$. In particular, the trajectory $\hq(t) = q^1(t)$, $t \in (0, \wht]$, is free of conjugate points. 
\end{proof}

So we proved that extremal trajectories $q(t) = \Exp(\lam,t)$ with $\lam \in C_1$ (i.e., corresponding to oscillating pendulum) are locally optimal at any segment $[0, t_1]$, $t_1 >0$.

\subsection{Conjugate points for the case of rotating pendulum}
\label{subsec:conjC2}
In this subsection we obtain bounds on conjugate points in the case $\lam \in C_2$.

Using the formulas for extremal trajectories of Subsec.~3.3~\cite{max_sre2}, we get:
\begin{align}
&J = \frac{\partial(x_t, y_t, \th_t)}{\partial(t,\f,k)} = - \frac{4k}{(1-k^2)(1-k^2 \ssp  \tsp)} J_2, \nonumber\\
&p = t/(2k), \qquad \tau = \p + t/(2k) = (2 \f + t)/(2k), \label{ptauC2}\\
&J_2 = \a \tsp + \b \tcp, \label{J2alphabeta} \\
&\a = (1-k^2) \ss \a_1, \label{alphaalpha1}\\
&\a_1 = \cc \dd (p - 2 \Eo) + \ss(\ddp + \Eo(p-\Eo)), \label{alpha1}\\
&\b = f_1(p) \b_1, \qquad \b_1 = \cc \Eo - \dd \ss, \label{betabeta1}
\end{align}
where $f_1(p,k) =  \cc (\E(p)-p)   - \dd\ss$, see Eq.~(5.12)~\cite{max_sre2}. 

\subsubsection{Preliminary lemmas}

Recall that we denoted the first positive root of the function $f_1(p)$ by $p_1^1(k)$, see Lemma~5.3~\cite{max_sre2}. 

\begin{lemma}
\label{lem:J2p=p11}
If $k\in(0,1)$ and $p = p_1^1(k)$, then $\a(p,k) > 0$.

If additionally $\ts \neq 0$, then $J_2 > 0$ and $J < 0$.
\end{lemma}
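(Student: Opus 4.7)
The key observation is that at $p = p_1^1(k)$ the defining equation $f_1(p) = 0$ annihilates the second coefficient $\b$ in the decomposition~\eq{J2alphabeta}, so $J_2$ collapses to $\a\,\tsp$. The lemma therefore reduces to the positivity of $\a(p,k)$: once $\a > 0$ is established, the hypothesis $\ts \ne 0$ yields $J_2 > 0$, and the sign of $J$ is read off from the prefactor $-4k/[(1-k^2)(1-k^2\ssp\tsp)]$, which is strictly negative for $k \in (0,1)$ since the denominator is positive ($k^2\ssp\tsp < 1$).

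For $\a > 0$ I would use the equation $f_1(p) = 0$, written as $\cc(\Eo - p) = \dd\,\ss$, as a substitution rule. A preliminary sign analysis rules out the degenerate cases: $\ss$ cannot vanish, for otherwise the right-hand side would vanish and, since $\Eo - p < 0$ for $p > 0$ by the first inequality in~\eq{p-E>0}, one would be forced to $\cc = 0$, violating $\ssp + \ccp = 1$; similarly $\cc \ne 0$. Moreover, $\dd > 0$ and $\Eo - p < 0$ force $\ss$ and $\cc$ to have opposite signs at $p_1^1$.

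The heart of the argument is then to substitute $p - \Eo = -\dd\,\ss/\cc$ into the formula~\eq{alpha1} for $\a_1$, writing $p - 2\Eo = (p - \Eo) - \Eo$. I expect a telescoping in which the $\ddp\,\ss$ contribution from $\ss\cdot\ddp$ cancels against the $-\ddp\,\ss$ coming from $\cc\dd\,(p - \Eo) = -\ddp\,\ss$, and the remaining terms recombine through $\ssp + \ccp = 1$ into the compact identity
\[
\a_1 \;=\; -\,\frac{\Eo\,\dd}{\cc}.
\]
Then by~\eq{alphaalpha1}, $\a = (1-k^2)\,\ss\,\a_1 = -(1-k^2)\,\Eo\,\dd\,\ss/\cc$, which is strictly positive since $\Eo,\dd,1-k^2$ are all positive while $\ss/\cc < 0$ by the preceding sign analysis. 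The main — and essentially only — obstacle is to execute this telescoping reduction of $\a_1$ correctly; everything else is sign-bookkeeping. I would verify the cancellation by expanding $\cc\dd(p-2\Eo) + \ss(\ddp + \Eo(p-\Eo))$ term by term after the substitution, making sure that the two $\ddp\,\ss$ contributions annihilate each other and that the surviving terms combine via the Pythagorean identity $\ssp + \ccp = 1$.
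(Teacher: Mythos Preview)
Your argument is correct. Both you and the paper exploit the same substitution $\cc(\Eo-p)=\dd\,\ss$ at $p=p_1^1$, but you organize it differently. The paper introduces the auxiliary function $\f(p)=\ss\dd-(2\Eo-p)\cc$ and the decomposition $\a_1=\dd\,\f(p)+\ss\,\Eo(p-\Eo)$; at $p_1^1$ it computes $\f=-\Eo\,\cc>0$ (invoking $\cc<0$, $\ss>0$ from the location $p_1^1\in(K,2K)$, Cor.~5.1~\cite{max_sre2}) and concludes that both summands are positive. You instead push the substitution all the way to the closed form $\a_1=-\Eo\,\dd/\cc$ and get the sign from the observation that $\ss$ and $\cc$ are nonzero of opposite sign, which you deduce directly from $f_1=0$ together with $\Eo-p<0$, without needing the interval information for $p_1^1$. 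Your route is slightly more self-contained for this lemma; the paper's is set up so that the function $\f(p)$ and the decomposition~\eq{alpha1dphi} can be reused in Lemma~\ref{lem:J2z=0}, where one needs $\a_1>0$ on the whole interval $(0,p_1^1]$ rather than only at the endpoint.
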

\begin{proof}
In terms of the auxiliary function 
\be{phipk}
\f(p,k) = \ss \dd - (2 \Eo - p) \cc,
\ee
 we have a decomposition
\be{alpha1dphi}
\a_1 = \dd \f(p) + \ss \Eo (p - \Eo).
\ee

Let $k \in (0, 1)$ and $p = p_1^1(k)$. Then $f_1(p) = 0$, i.e., $\ss \dd = \cc(\Eo - p)$. Thus $\f(p) = \cc(\Eo-p) - (2 \Eo - p) \cc = - \Eo \cc$. By virtue of Cor.~5.1~\cite{max_sre2},
we have $\cc < 0$, so $\f(p) > 0$. Moreover, $\ss > 0$. Then decomposition~\eq{alpha1dphi} yields $\a_1(p) > 0$, consequently, $\a(p) > 0$.

If additionally $\ts \neq 0$, then it is obvious that $J_2 > 0$ and $J < 0$.
\end{proof}

\begin{lemma}
\label{lem:alphak->0}
$\exists \ \hk \in (0,1) \quad \forall \ k \in (0, \hk) \quad \forall p \in (0, p_1^1] \qquad \a(p,k) > 0$.
\end{lemma}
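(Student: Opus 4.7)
The plan is to exploit the decomposition $\a_1 = \dd\f(p) + \ss\Eo(p-\Eo)$ given in~\eq{alpha1dphi} and to show that on $(0, 2K(k))$---which contains $(0, p_1^1(k)]$---each summand of $\a_1$ is nonnegative, with at least one strictly positive.

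First I would derive an integral representation of $\f$. Differentiating $\f(p,k) = \ss\dd - (2\Eo - p)\cc$ in $p$ and using the Jacobi identity $\ddp + k^2 \ssp = 1$, all terms without the factor $(2\Eo-p)$ cancel, leaving
\[
\pder{\f}{p} = (2\Eo - p)\,\ss\,\dd.
\]
Since $\f(0,k) = 0$, integration gives $\f(p) = \int_0^p (2\E(t) - t)\,\sn t\,\dn t\,dt$.

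Next I would take $\hk = 1/\sqrt{2}$. For $k \in (0,\hk)$ one has $\dn^2 t \geq 1 - k^2 > 1/2$, so $\frac{d}{dt}(2\E(t) - t) = 2\dn^2 t - 1 > 0$, and combined with $2\E(0) - 0 = 0$ this gives $2\E(t) - t > 0$ for $t > 0$. Since $\sn t, \dn t > 0$ on $(0, 2K)$, the integrand above is positive, hence $\f(p) > 0$ on $(0, 2K)$.

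Finally, on $(0, 2K)$ all of $\ss$, $\dd$, $\Eo$, and $p - \Eo = k^2 \int_0^p \ssp\,dt$ are positive, so both terms in $\a_1$ are nonnegative and the first, $\dd\f(p)$, is strictly positive. Thus $\a_1 > 0$ and $\a = (1-k^2)\ss\,\a_1 > 0$. The inclusion $(0, p_1^1(k)] \subset (0, 2K(k))$ follows from $f_1(0) = 0$ with $f_1'(0) = -1$ (so $f_1 < 0$ just right of $0$) and $f_1(2K) = 2(K - \E(K)) > 0$, forcing the first positive root of $f_1$ to lie in $(0, 2K)$. The only delicate point is the algebraic cancellation establishing $\pder{\f}{p} = (2\Eo - p)\,\ss\,\dd$; everything afterwards is straightforward sign-tracking, so no compactness or Taylor-expansion argument (as in the oscillating case) is actually needed here.
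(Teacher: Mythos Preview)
Your proof is correct and takes a genuinely different route from the paper's. The paper argues exactly as in Lemma~\ref{lem:Jk->0}: it computes the Taylor expansions
\[
\a = \sin p(\sin p - p \cos p) + o(1),\quad k\to 0,\qquad
\a = \tfrac{p^4}{3} + o(p^2+k^2)^2,\quad p^2+k^2\to 0,
\]
and then runs a compactness contradiction on a hypothetical sequence $(\tau_n,p_n,k_n)$ violating the claim. The value of $\hk$ is therefore non-constructive.

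Your argument instead exploits the decomposition~\eq{alpha1dphi} directly. The key new ingredient is the identity $\partial\f/\partial p=(2\Eo-p)\,\ss\,\dd$ (the cancellation you flag is indeed routine once $\ddp+k^2\ssp=1$ is used), which yields the integral form of $\f$ and lets you force $\f>0$ on all of $(0,2K)$ as soon as $k<1/\sqrt2$. This buys you three things: an explicit $\hk=1/\sqrt2$; a conclusion on the larger interval $(0,2K)\supset(0,p_1^1]$; and no appeal to asymptotics or compactness. Interestingly, the paper does later prove $\f>0$ on $(0,p_1^1]$ in Lemma~\ref{lem:J2z=0}, but by a different device (monotonicity of $\f/\cc$ on $(0,K)$, then the relation $\f=-f_1-\Eo\cc$ on $(K,p_1^1]$); your integral representation is cleaner and works uniformly across $p=K$. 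The trade-off is that the paper's Taylor-expansion method is the same template used in the $C_1$ case, so it keeps the two parallel lemmas methodologically aligned, whereas your method is specific to the algebraic structure of $\a_1$ in $C_2$.
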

\begin{proof}
The statement of this lemma follows by the argument used in the proof of Lemma~\ref{lem:Jk->0} from the Taylor expansions
\begin{align*}
&\a = \sin p(\sin p - p \cos p) + o(1), \qquad k \to 0, \\
&\a = \frac{p^4}{3} + o(p^2+k^2)^2, \qquad p^2+k^2 \to 0.
\end{align*}
\end{proof}

\begin{lemma}
\label{lem:beta1}
$\forall \ k \in(0,1) \quad \forall p \in (0, 2 K] \qquad \b_1(p,k) < 0$.
\end{lemma}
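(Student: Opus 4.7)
The plan is to split the range $p \in (0, 2K]$ into the two subintervals $(0, K)$ and $[K, 2K]$, on which the signs of the Jacobi functions $\cc$, $\ss$, $\dd$ behave quite differently, and to treat them separately.

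On $[K, 2K]$ the statement follows from a direct sign inspection of $\b_1 = \cc \Eo - \dd \ss$. For $p \in (K, 2K)$ one has $\cc < 0$, $\ss > 0$, $\dd > 0$, and $\Eo > 0$, so $\cc \Eo < 0$ and $-\dd \ss < 0$, and strict negativity is immediate. At the endpoints one checks $\b_1(K,k) = -\sqrt{1-k^2} < 0$ using $\cn K = 0$, $\sn K = 1$, $\dn K = \sqrt{1-k^2}$, and $\b_1(2K,k) = -2 E(k) < 0$ using $\sn 2K = 0$, $\cn 2K = -1$, and $\E(2K) = 2 E(k)$, where $E(k) = \E(K)$ is the complete elliptic integral of the second kind. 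This part is essentially routine.

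The substantive part of the proof is the range $p \in (0, K)$, where $\cc \Eo > 0$ and $\dd \ss > 0$ are both positive and one must actually compare their magnitudes. The idea is to introduce the normalized quantity
$$
G(p,k) \eqdef \frac{\b_1(p,k)}{\cc} = \Eo - \frac{\ss \dd}{\cc},
$$
which is well defined and smooth on $(0, K)$ since $\cc > 0$ there, and to compute $G'$ using $\E'(p) = \dd^2$ together with the standard identities $(\sn p)' = \cc \dd$, $(\cn p)' = -\ss \dd$, $(\dn p)' = -k^2 \ss \cc$, and $\dd^2 + k^2 \ss^2 = 1$. A short calculation shows $(\ss \dd/\cc)' = (\dd^2 - k^2 \ss^2 \cc^2)/\cc^2$, so that
$$
G'(p,k) = \dd^2 - \frac{\dd^2 - k^2 \ss^2 \cc^2}{\cc^2} = \frac{\ss^2 (k^2 \cc^2 - \dd^2)}{\cc^2} = -(1-k^2)\, \frac{\ss^2}{\cc^2},
$$
because $k^2 \cn^2 p - \dn^2 p$ simplifies identically to $-(1-k^2)$. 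Since $G(0,k) = 0$ and $G'(p,k) < 0$ on $(0,K)$, we conclude $G(p,k) < 0$ on $(0, K)$, and multiplying by $\cc > 0$ yields $\b_1(p,k) < 0$.

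The main obstacle is spotting the right normalization $\b_1/\cc$: a direct computation of $\b_1'$ leaves a mixed-sign expression whose sign is not at all transparent, whereas dividing by $\cc$ first triggers the cancellation $k^2 \cn^2 - \dn^2 = -(1-k^2)$ and reduces the whole range $(0,K)$ to the trivial observation $G(0,k) = 0$ together with the explicitly negative formula for $G'$.
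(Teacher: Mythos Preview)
Your proof is correct. On the interval $(0,K)$ it is essentially identical to the paper's argument: both normalize by $\cc$ and use the derivative formula $(\b_1/\cc)' = -(1-k^2)\ssp/\ccp$ together with $\b_1(0)=0$. On the interval $[K,2K]$ the paper continues with the same monotonicity of $\b_1/\cc$ (noting $\b_1/\cc \to +\infty$ as $p\to K^+$ and $\b_1(2K)/\cn(2K) = \E(2K) > 0$, hence $\b_1/\cc > 0$ and $\b_1 < 0$ since $\cc < 0$), whereas your direct sign inspection of the two summands $\cc\,\Eo < 0$ and $-\dd\,\ss < 0$ is more elementary and avoids the normalization altogether on that range.
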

\begin{proof}
Since $(\b_1(p)/\cc)' = -(1-k^2) \ssp / \ccp$, the function $\b_1(p)/\cc$ decreases at the segments $p \in[0, K)$ and $p \in (K, 2 K]$. 

We have $\b_1(0) = 0$, thus $\b_1(p)/\cc < 0$, so $\b_1(p) < 0$ for $p \in (0, K)$.

Further, $\b_1(K) = - \sqrt{1-k^2} < 0$.

Since $\b_1(p)/\cc \to + \infty$ as $p \to K + 0$, and $\b_1(2K)/\cn(2K) = \E(2K) > 0$, we have $\b_1(p)/\cc >0$, so $\b_1(p) < 0$ for $p \in(K, 2 K]$.
\end{proof}

\begin{lemma}
\label{lem:J2z=0}
Let $k \in (0,1)$.
\begin{itemize}
\item[$(1)$]
Let $\ts = 0$.
Then $J_2(\tau,p,k) > 0$ for $p \in (0, p_1^1)$, and $J_2(\tau,p,k) = 0 $ for $p = p_1^1$.
\item[$(2)$]
Let $\ts \neq 0$. Then $J_2(\tau,p,k) > 0$ for $p \in (0, p_1^1]$.
\end{itemize}
\end{lemma}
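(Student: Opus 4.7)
My strategy is to exploit the decomposition $J_2 = \a \tsp + \b \tcp$ together with the Jacobi identity $\tsp + \tcp = 1$, reducing the claim to establishing the signs of $\a$ and $\b$ separately on $(0, p_1^1]$. For part~$(1)$, the assumption $\ts = 0$ forces $\tsp = 0$ and $\tcp = 1$, so $J_2 = \b = f_1(p) \b_1(p)$. I would first verify the inclusion $p_1^1 \leq 2K$ by checking $f_1(0) = 0$ with $f_1'(0) = -1$ via direct Taylor expansion, together with $f_1(2K) = 2(K - \E(K)) > 0$; this forces $f_1 < 0$ on $(0, p_1^1)$ and $f_1(p_1^1) = 0$ by the definition of $p_1^1$ as the first positive root. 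Lemma~\ref{lem:beta1} then gives $\b_1 < 0$ on $(0, p_1^1]$, and the two signs combine to yield $\b > 0$ on $(0, p_1^1)$ with $\b(p_1^1) = 0$, which is exactly part~$(1)$.

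For part~$(2)$, where $\ts \neq 0$ and hence $\tsp > 0$, the argument above already furnishes $\b \tcp \geq 0$, so it suffices to prove $\a > 0$ throughout $(0, p_1^1]$ for every $k \in (0, 1)$. Writing $\a = (1-k^2)\ss \, \a_1$ and recalling the decomposition
\begin{equation*}
\a_1 = \dd \, \f(p) + \ss \Eo(p - \Eo), \qquad \f(p) = \ss \dd - (2 \Eo - p)\cc,
\end{equation*}
from the proof of Lemma~\ref{lem:J2p=p11}, the second summand is strictly positive on $(0, 2K)$ by~\eqref{p-E>0}, while at the endpoints one has $\f(0) = 0$, $\f(p) \sim p^3/3$ near $p = 0$, and $\f(p_1^1) = -\Eo \cc > 0$ thanks to Cor.~5.1~\cite{max_sre2}. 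If I can establish $\f \geq 0$ on all of $(0, p_1^1]$, then $\a_1 > 0$ and hence $\a > 0$ follow at once, and $J_2 \geq \a \tsp > 0$ completes part~$(2)$.

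\textbf{Main obstacle.} The analytic core is excluding interior zeros of $\a_1$ (equivalently, of $\f$) on $(0, p_1^1)$ uniformly in $k \in (0, 1)$. My plan is a homotopy argument modelled on the proof of Theorem~\ref{th:conjC1}: Lemma~\ref{lem:alphak->0} anchors positivity for small $k$, Lemma~\ref{lem:J2p=p11} anchors it at the moving endpoint $p = p_1^1(k)$ for every $k$, and an analogue of Proposition~\ref{propos:cor2.2-2.3} applied to $\a_1$ in place of the full Jacobian should propagate the sign across all $k \in (0, 1)$. A first interior zero $(p^*, k^*)$ would necessarily be tangential, forcing $\partial_p \a_1(p^*, k^*) = 0$ as well; I expect the contradiction to emerge from combining this tangency with the near-origin expansion $\f \sim p^3/3$ and the endpoint value $\f(p_1^1) = -\Eo\cc > 0$, but nailing this down rigorously is the step that will require the most care.
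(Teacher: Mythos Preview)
Your setup for both parts is correct and matches the paper's approach exactly: the decomposition $J_2=\a\tsp+\b\tcp$, the sign of $\b=f_1\b_1$ via Cor.~5.1~\cite{max_sre2} and Lemma~\ref{lem:beta1}, and the reduction of part~(2) to showing $\f(p)>0$ on $(0,p_1^1]$ via the identity $\a_1=\dd\,\f(p)+\ss\,\Eo(p-\Eo)$. The divergence is only in how you propose to establish $\f>0$.

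Your homotopy plan is a detour. Proposition~\ref{propos:cor2.2-2.3} is a statement about conjugate times along extremals, not a general sign-propagation lemma for scalar functions; to use it for $\a_1$ you would have to rebuild an analogue from scratch, and the tangency argument you sketch is not finished. The paper instead proves $\f>0$ on $(0,p_1^1]$ directly and elementarily, splitting at $p=K$:
\begin{itemize}
\item On $(0,K)$: compute $(\f(p)/\cc)'=\ddp\ssp/\ccp>0$, so $\f/\cc$ is increasing from $\f(0)/\cn(0)=0$; since $\cc>0$ here, $\f>0$. At $p=K$ one checks $\f(K)=\sqrt{1-k^2}>0$.
\item On $(K,p_1^1]$: use the algebraic identity $\f(p)=-f_1(p)-\Eo\cc$. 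On this range $f_1\le 0$ (Cor.~5.1~\cite{max_sre2}) and $\cc<0$ (since $p_1^1\in(K,2K)$), so both terms are nonnegative and $\f>0$.
\end{itemize}
This two-line argument replaces your entire ``Main obstacle'' paragraph; once you have it, $\a_1>0$ and $\a>0$ follow immediately, and $J_2\ge\a\tsp>0$ finishes part~(2). No continuity in $k$, no anchoring lemmas, no tangency analysis is needed.
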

\begin{proof}
If $p\in (0,p_1^1)$, then $f_1(p,k) < 0$ (Cor.~5.1~\cite{max_sre2}), and $\b_1(p,k) < 0$ (Lemma~\ref{lem:beta1}), thus $\b(p,k) = f_1(p,k) \b_1(p,k) > 0$.

(1)
Let $\ts = 0$. If $p \in (0, p_1^1)$, then $J_2(\tau,p,k) = \b(p,k) > 0$.
And if $p = p_1^1$, then $f_1(p,k) = 0$, thus $J_2(\tau,p,k) = \b(p,k) = 0$.

(2)
Let $\ts \neq 0$. 

(2.a) We prove that the function $\f(p)$ given by~\eq{phipk} satisfies the inequality
$$
\f(p) > 0 \qquad \forall \ p \in (0,K].
$$
First, $\f(p) = p^3/3 + o(p^3) > 0 $ as $ p \to + 0$. Second,
$$
(\f(p)/\cc)' = \ddp \ssp / \ccp > 0 \qquad \forall \  p \in (0,K).
$$
Thus $\f(p) > 0$ for $p \in (0,K)$. And if $p = K$, then $\f(p) = \sqrt{1-k^2}> 0$.

(2.b)
By virtue of the decomposition $\f(p) = - f_1(p) - \E(p) \cc$, we get the inequality $\f(p) > 0 $ for all $p \in (K, p_1^1]$. We proved that 
$$
\f(p) > 0 \qquad \forall \ p \in (0,p_1^1].
$$

(2.c)
In view of~\eq{alpha1dphi}, we obtain that $\a_1(p) > 0$ for $p \in (0, p_1^1]$. Then Eq.~\eq{alphaalpha1} yields $\a(p) > 0$ for $p \in (0, p_1^1]$. Finally, Eq.~\eq{J2alphabeta} gives $J_2 > 0$ for $p \in (0, p_1^1]$.
\end{proof}

\begin{lemma}
\label{lem:J2z>0}
$\forall \ z \in (0,1] \quad \exists \ \hk \in (0,1) \quad \forall \ k \in (0, \hk) \quad \forall \ p \in (0, p_1^1]$ we have $J_2(z, p, k) > 0$.
\end{lemma}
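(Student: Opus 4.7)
The plan is to use the decomposition $J_2 = \a\,\tsp + \b\,\tcp$ from~\eq{J2alphabeta}, reinterpret the statement as requiring positivity of $\a(p,k)\,z + \b(p,k)(1-z)$ (with $z$ playing the role of $\tsp$ and $1-z$ the role of $\tcp$), and then combine positivity of $\a$ for small $k$ (Lemma~\ref{lem:alphak->0}) with non-negativity of $\b$ throughout $(0, p_1^1]$ (already built into the argument for Lemma~\ref{lem:J2z=0}).

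First I would establish that $\b(p,k) \geq 0$ for every $p \in (0, p_1^1]$ and every $k \in (0,1)$, with equality precisely at the right endpoint $p = p_1^1$. This follows from the factorization $\b = f_1\,\b_1$ together with Cor.~5.1~\cite{max_sre2}, which gives $f_1(p,k) < 0$ on $(0, p_1^1)$ and $f_1(p_1^1,k) = 0$, and Lemma~\ref{lem:beta1}, which gives $\b_1(p,k) < 0$ on $(0, 2K]$ (one uses here the bound $p_1^1 \leq 2K$ from Lemma~5.3~\cite{max_sre2}). Hence the product $\b$ has the required non-negative sign on the whole interval.

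Next I would invoke Lemma~\ref{lem:alphak->0} to extract $\hk \in (0, 1)$ such that $\a(p,k) > 0$ for every $k \in (0, \hk)$ and every $p \in (0, p_1^1]$. For any $z \in (0, 1]$ this immediately yields
$$
J_2(z, p, k) = \a(p,k)\,z + \b(p,k)(1 - z) \;\geq\; \a(p,k)\,z \;>\; 0,
$$
which is the claimed inequality.

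There is essentially no serious obstacle: all the delicate analysis, namely the uniform-in-$p$ positivity of $\a$ for small $k$ (via Taylor expansions around $k = 0$), is packaged inside Lemma~\ref{lem:alphak->0}, while the sign of $\b$ is read off directly from Cor.~5.1~\cite{max_sre2} and Lemma~\ref{lem:beta1}. Notice that the threshold $\hk$ produced this way does not depend on $z$, so the conclusion in fact holds uniformly for all $z \in (0, 1]$ with a single choice of $\hk$.
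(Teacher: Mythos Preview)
Your proof is correct and follows essentially the same route as the paper: fix $z$, invoke Lemma~\ref{lem:alphak->0} to obtain $\hk$ with $\a>0$ on $(0,p_1^1]$ for $k\in(0,\hk)$, and combine this with the sign information $\b\geq 0$ coming from $\b=f_1\b_1$, Cor.~5.1~\cite{max_sre2}, and Lemma~\ref{lem:beta1}. Your treatment of the endpoint $p=p_1^1$ (where $f_1=0$ forces $\b=0$) is in fact slightly more careful than the paper's, and your remark that $\hk$ may be chosen uniformly in $z$ is a valid strengthening.
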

\begin{proof}
Fix any $z \in (0,1]$. By Lemma~\ref{lem:alphak->0}, 
$$
\exists \ \hk \in (0,1) \quad \forall \ k \in (0, \hk) \quad \forall \ p \in (0, p_1^1] \qquad \a(p, k) > 0.
$$
But if $p \in (0, p_1^1]$, then $p \in (0, 2 K]$, thus $\b_1(p,k) < 0$ by Lemma~\ref{lem:beta1}, so $\beta(p,k) > 0$ by Cor.~5.1~\cite{max_sre2}. 

Then the inequalities $\a(p,k) > 0$, $\b(p,k) > 0$ imply the required inequality $J_2(\tau,p,k) > 0$.
\end{proof}

\subsubsection{Conjugate points in $C_2$}
First we obtain a lower bound on the first conjugate time. It will play a crucial role in the subsequent analysis of the global structure of the exponential mapping in Sections~\ref{sec:cut}, \ref{sec:bound}.

\begin{theorem}
\label{th:conjC2}
If $\lam \in C_2$, then $\tconj(\lam) \geq 2 k p_1^1(k)$.
\end{theorem}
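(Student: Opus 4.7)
The plan is to show that the Jacobian $J$ of the exponential mapping is nonzero along every $C_2$-extremal throughout the time range $t \in (0, 2kp_1^1(k))$ and then to convert this into absence of conjugate points via Proposition~\ref{propos:cor2.2-2.3}. In contrast to the proof of Theorem~\ref{th:conjC1}, no homotopy in the parameter plane $(k,p)$ will be required: Lemma~\ref{lem:J2z=0} already furnishes positivity of $J_2$ for every $k \in (0,1)$ on the whole relevant $p$-range, which is the decisive strengthening relative to the $C_1$ situation.

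Fix $\hlam \in C_2$ with elliptic parameters $(\hf, \hk)$. Under the $C_2$ parametrization $p = t/(2\hk)$, the interval $t \in (0, 2\hk p_1^1(\hk))$ corresponds to $p \in (0, p_1^1(\hk))$. From the factorization
\[
J = -\,\frac{4k}{(1-k^2)(1-k^2\ssp\tsp)}\, J_2
\]
and the bounds $k \in (0,1)$, $k^2\ssp\tsp < 1$, the prefactor is strictly positive, so $\sgn J = -\sgn J_2$. Both parts of Lemma~\ref{lem:J2z=0} then combine to give $J_2(\tau, p, \hk) > 0$ for every $\tau \in \R$ and every $p \in (0, p_1^1(\hk))$: part~(1) handles $\ts = 0$ and part~(2) handles $\ts \neq 0$. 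Hence $J \neq 0$ at every point of the trajectory $\Exp(\hlam, t)$ for $t \in (0, 2\hk p_1^1(\hk))$.

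To transfer nonvanishing of $J$ into absence of conjugate points, I would fix an arbitrary $\wht \in (0, 2\hk p_1^1(\hk))$ and apply Proposition~\ref{propos:cor2.2-2.3}(1) to the trivial homotopy $\lam^s \equiv \hlam$, $t_1^s = s\wht$, for $s \in [s_0, 1]$. Here $s_0 > 0$ is chosen small enough that the initial segment $\Exp(\hlam, t)$, $t \in (0, s_0\wht]$, contains no conjugate point, which is possible because conjugate times are isolated from $t = 0$ (this is precisely the isolation property of conjugate times listed after hypotheses \Ho--\Hf). The endpoint nonconjugacy hypothesis of the proposition follows from $J \neq 0$ at $(\hlam, s\wht)$ for every $s \in [s_0, 1]$, which was just established. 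Proposition~\ref{propos:cor2.2-2.3}(1) then yields that the trajectory $\Exp(\hlam, t)$, $t \in (0, \wht]$, contains no conjugate point; letting $\wht \uparrow 2\hk p_1^1(\hk)$ gives the bound $\tconj(\hlam) \geq 2\hk p_1^1(\hk)$. The one substantive step is the positivity of $J_2$ on $(0, p_1^1)$ supplied by Lemma~\ref{lem:J2z=0}; everything else is a direct imitation of the homotopy framework already used in Theorem~\ref{th:conjC1}.
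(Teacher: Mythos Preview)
Your proof is correct and takes a genuinely more direct route than the paper's. The decisive observation is that Lemma~\ref{lem:J2z=0} already yields $J_2(\tau, p, k) > 0$ for \emph{every} $k \in (0,1)$, every $\tau$, and every $p \in (0, p_1^1(k))$, so that no deformation in $k$ is required. By contrast, the paper follows the template of Theorem~\ref{th:conjC1}: it homotopes $k^s = s\,k^1$ down to a small value where Lemma~\ref{lem:J2z>0} guarantees $J_2 > 0$ on the whole segment, checks non-conjugacy of the moving endpoint $p^s = p_1^1(k^s)$ via Lemma~\ref{lem:J2p=p11}, and then invokes Proposition~\ref{propos:cor2.2-2.3}(1) to propagate absence of conjugate points; a separate treatment (case $z^1 = 0$) is still needed for trajectories whose terminal $\tau$ satisfies $\ts = 0$. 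Your approach collapses all of this to a single pointwise appeal to Lemma~\ref{lem:J2z=0}. What the paper's route buys is that in its case $z^1 \neq 0$ it actually shows the closed interval $(0, 2kp_1^1]$ is free of conjugate points --- slightly more than the theorem asserts --- but this extra information is recovered anyway in Proposition~\ref{propos:conjC2ts=0}.

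One minor remark: your own invocation of Proposition~\ref{propos:cor2.2-2.3} with the trivial homotopy $\lam^s \equiv \hlam$, $t_1^s = s\wht$ is itself superfluous. In the framework of Subsection~2.1, vanishing of the Jacobian $J$ at $(\lam, t)$ is precisely the criterion for $t$ to be a conjugate time (this is how the paper uses $J$ throughout, e.g.\ in the proof of Theorem~\ref{th:tconjC2fin}). Hence once you have $J \neq 0$ on the full open interval $t \in (0, 2\hk\, p_1^1(\hk))$, the absence of conjugate times there is immediate, with no homotopy --- trivial or otherwise --- required. Your argument is not wrong, just longer than it needs to be; and note that the ``endpoint non-conjugacy'' hypothesis you verify for the trivial homotopy already presupposes the equivalence $J \neq 0 \Leftrightarrow$ not conjugate, so the detour gains nothing.
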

\begin{proof}
Given any $\lam \in C_2$, compute the corresponding elliptic coordinates $(\f, k)$. If additionally we have $t>0$, find the corresponding parameters $p = t/(2k)$, $\tau = \f/k + t/(2k)$ and denote $z = \tsp$.

We should prove that for any $\lam \in C_2$ the interval $t \in (0, 2 k p_1^1(k))$ does not contain conjugate times for the extremal trajectory $q(t) = \Exp(\lam,t)$.

Take any $\lam^1 \in C_2$ and denote the corresponding elliptic coordinates $(\f^1, k^1)$. For $t^1 = 2 k^1 p_1^1(k^1)$ we  denote the corresponding parameters $p^1$, $\tau^1$, $z^1$. In order to prove that the extremal trajectory $q^1(t) = \Exp(\lam^1,t)$ does not have conjugate points at the interval $t \in (0, t^1)$, we show that
$$
p \in (0,p^1) \then J_2(z^1, p, k^1) > 0 \then J(z^1, p, k^1) < 0.
$$

\medskip
(1) Assume first that $z^1 = \sn^2(\tau^1, k^1) \neq 0$, i.e., $z^1 \in (0, 1]$. We prove that in this case 
$$
p \in (0,p^1] \then J(z^1, p, k^1) < 0.
$$

Consider the following continuous curve in the space $(z,p,k)$:
$$
\{(z^1, p^s, k^s) \mid s \in (0,1]\}, \qquad k^s = s k^1, \quad p^s = p_1^1(k^s).
$$
The corresponding curve in the space $(\tau,p,k)$ is
$$
\{(\tau^s, p^s, k^s) \mid s \in (0,1]\}, \qquad \tau^s = F(\am(\tau^1, k^1), k^s),
$$
and in the space $(t,\f,k)$ is
$$
\{(t^s, \f^s, k^s) \mid s \in (0,1]\}, \qquad t^s = 2 k^s p^s, \quad \f^s = (\tau^s - p^s)k^s.
$$
Let $\lam^s = (\f^s, k^s)$, $s \in (0, 1]$, be the corresponding curve in $C_2$, and consider the continuous one-parameter family of extremal trajectories
$$
q^s(t) = \Exp(\lam^s, t), \qquad t \in [0, t^s], \quad s \in (0, 1].
$$

For any $s \in (0,1]$, if $t = t^s$, then by Lemma~\ref{lem:J2p=p11} we have $J_2(z^1, p^1_1(k^s), k^s) < 0$, i.e., the terminal instant $t = t^s$ is not a conjugate time along the extremal trajectory $q^s(t)$.

Further, by Lemma~\ref{lem:J2z>0}, for $z^1 \in (0,1]$ 
$$
\exists \ \hk \in (0, 1) \quad \forall \ k \in (0, \hk) \quad \forall \ p \in (0, p_1^1(k)]
\qquad J_2(z^1, p, k) > 0.
$$
Consequently,  there exists $s_0 \in (0,1)$ such that the whole trajectory $q^{s_0}(t)$, $t \in (0, t^{s_0}]$, is free of conjugate points.

Then Propos.~\ref{propos:cor2.2-2.3} implies that the trajectory $q^1(t)$, $t \in (0, t^1]$, also does not contain conjugate points.

We proved that if $z^1 \neq 0$, then the trajectory $q^1(t) = \Exp(\lam^1,t)$, $t \in (0, t^1]$, does not have conjugate points.

\medskip
(2)
Now consider the case $z^1 = \sn^2(\tau^1, k^1) = 0$. Then Lemma~\ref{lem:J2z=0} states that the terminal instant $t = 2 k^1 p_1^1(k^1)$ is a conjugate point. We prove that all the less instants are not conjugate.

Since conjugate points are isolated one from another at each extremal trajectory, there exists $p < p_1^1(k^1)$ arbitrarily close to $p_1^1(k^1)$ such that the corresponding time $t = 2 k^1 p$ is not conjugate.

Consider the continuous curve in the space $(z,p,k)$:
$$
\{(z_s, p, k^1)\mid s \in [0, \eps) \}, \qquad z_s = s z^1.
$$
By item (1) of this proof, there exists $\eps > 0$ such that for any $s \in (0, \eps)$ the extremal trajectory $q^s(t)$, $t \in (0, t^s]$, $t^s = 2 k^1 p$, does not have conjugate points. By Propos.~\ref{propos:cor2.2-2.3}, for $s =0$ the initial extremal trajectory $q^0(t)$, $t \in (0, t^0]$, also does not contain conjugate points.  The endpoint $t^0 = 2 k^1 p$ can be chosen arbitrarily close to $t^1 = 2k^2p_1^1(k^1)$, so the initial extremal trajectory does not have conjugate points for $t \in (0, t^1)$.
\end{proof}

Now we obtain the final result on the first conjugate time in the domain $C_2$ --- the following two-side bound.

\begin{theorem}
\label{th:tconjC2fin}
If $\lam \in C_2$, then 
\be{tconjbound1}
2kp_1^1(k) \leq \tconj(\lam) \leq 4 k K(k).
\ee
\end{theorem}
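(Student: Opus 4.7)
The lower bound $2 k p_1^1(k) \leq \tconj(\lam)$ is already established in Theorem~\ref{th:conjC2}, so only the upper bound $\tconj(\lam) \leq 4 k K(k)$ remains. My plan is to exhibit a sign change of the Jacobian $J_2(\tau(t), p(t), k)$ on the interval $t \in [2 k p_1^1(k),\, 4 k K(k)]$ along the trajectory $\Exp(\lam, t)$; by continuity and the intermediate value theorem this forces a zero of $J_2$ -- and hence of $J$ -- at or before $t_* := 4 k K(k)$, giving a conjugate point.

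First I specialize $J_2 = \a \tsp + \b \tcp$ to $p = 2 K$. Using $\sn(2K) = 0$, $\cn(2K) = -1$, $\dn(2K) = 1$ and $\E(2K) = 2 \E(K)$, the coefficient $\a = (1 - k^2)\sn(p)\a_1$ vanishes, while
$$
\b(2K, k) = f_1(2K, k)\,\b_1(2K, k) = 2\bigl(K - \E(K)\bigr)\bigl(-2 \E(K)\bigr) = -4 \E(K)\bigl(K - \E(K)\bigr).
$$
The classical inequality $K(k) > \E(K)$ for $k \in (0, 1)$ yields $\b(2K, k) < 0$. Along the trajectory with $\lam = (\f, k) \in C_2$ we have $\tau(t) = \f/k + t/(2k)$; at $t = t_*$, $\tau = \f/k + 2K$, and by $2K$-periodicity of $\cn^2$,
$$
J_2|_{t_*} = \b(2K, k)\,\cn^2(\f/k) \leq 0,
$$
with equality iff $\cn(\f/k) = 0$. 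At $t^* := 2 k p_1^1(k)$, $f_1(p_1^1) = 0$ gives $\b(p_1^1, k) = 0$, and by Lemma~\ref{lem:J2p=p11} $\a(p_1^1, k) > 0$, so
$$
J_2|_{t^*} = \a(p_1^1, k)\,\sn^2(\f/k + p_1^1) \geq 0,
$$
with equality iff $\sn(\f/k + p_1^1) = 0$.

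In the generic subcase $J_2|_{t^*} > 0$ and $J_2|_{t_*} < 0$, so the continuous function $t \mapsto J_2(\tau(t), p(t), k)$ has a zero on $(t^*, t_*)$; in the two degenerate subcases $J_2$ vanishes directly at one of the endpoints. Since the prefactor in $J = -\frac{4k}{(1 - k^2)(1 - k^2 \ssp \tsp)} J_2$ is finite and nonzero for $k \in (0, 1)$ (as $k^2 \ssp \tsp \leq k^2 < 1$), the zeros of $J$ and of $J_2$ coincide, and any such zero is a conjugate time. Therefore $\tconj(\lam) \leq t_* = 4 k K(k)$. The only mildly delicate step is the sign of $\b(2K, k)$, which reduces to the elementary inequality $K(k) > \E(K)$.
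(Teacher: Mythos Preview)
Your proof is correct and follows essentially the same route as the paper: evaluate the Jacobian at the two endpoints $p=p_1^1(k)$ and $p=2K(k)$, observe the sign change, and invoke the intermediate value theorem. The paper phrases the argument in terms of $J$ (using that $J<0$ on $(0,2kp_1^1)$ from the proof of Theorem~\ref{th:conjC2} and $J\geq 0$ at $t=4kK$), while you work directly with $J_2$ and add an explicit case split for the degenerate endpoints; the content is the same.
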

\begin{proof}
We proved in Th.~\ref{th:conjC2} that $2kp_1^1(k) \leq \tconj(\lam)$; moreover, if $t \in (0, 2 k p_1^1)$, then $J<0$.

Let $t = 4 k K$, then $p = 2 K$, thus $\a = 0$, $f_1 = p - \Eo>0$, $\b_1 = - \Eo < 0$, so
$$
J = - \frac{4k}{1-k^2} J_2 = \frac{4k}{1-k^2} \tcp \Eo(p - \Eo) \geq 0.
$$
It follows that for any $\lam \in C_2$ the function $t \mapsto J$ has a root at the segment $t \in [2kp_1^1, 4kK]$. Consequently, also the first root $\tconj \in [2kp_1^1, 4kK]$.
\end{proof}

One can show that the bound~\eq{tconjbound1} can be a little bit improved. The precise bound on the first conjugate time is
\be{tconjbounds}
2kp_1^1(k) \leq \tconj(\lam) \leq \g(k) = \min(4 k K, 2 k \pal(k)), 
\ee
where $p = \pal(k)$ is the first positive root of the equation $\a_1(p) = 0$, and the function $\a_1$ is given by Eq.~\eq{alpha1}. One can show that $\g(k) = 4 k K$ for $k \in (0, k_0]$ and $\g(k) = 2 k \pal(k)$ for $[k_0, 1)$, where $k_0 \approx 0.909$ is the unique root of the equation $2E(k) - K(k) = 0$, see Proposition~11.5~\cite{el_max}. Thus for $k \in (k_0,1)$ the bound~\eq{tconjbound1} is not exact and can be replaced by the following exact one:
\be{tconjbound2}
2kp_1^1(k) \leq \tconj(\lam) \leq 2 k \pal(k), \qquad k \in (k_0, 1).
\ee
The bound~\eq{tconjbound1} is illustrated at Figs.~\ref{fig:tconjk08}, \ref{fig:tconjkk0}; and the bound~\eq{tconjbound2} --- at Fig.~\ref{fig:tconjk099}. The exact bounds~\eq{tconjbounds} are plotted at Fig.~\ref{fig:tconjbounds}.

\twofiglabel
{tconjk08t}{Plot of  $\tconj(\psi,k)$, $k = 0.8 < k_0$}{fig:tconjk08}
{tconjkk0t}{Plot of  $\tconj(\psi,k)$, $k = k_0$}{fig:tconjkk0}

\twofiglabelh
{tconjk099t}{Plot of  $\tconj(\psi,k)$, $k = 0.99 > k_0$}{fig:tconjk099}
{tconjboundst}{Bounds of  $\tconj(\psi,k)$}{fig:tconjbounds}
{4}

\begin{proposition}
\label{propos:conjC2ts=0}
Let   $\lam \in C_2$ and  $\tau = (2 \f +  2 k p_1^1)/(2k)$.
\begin{itemize}
\item[$(1)$]
If $\ts = 0$, then $\tconj(\lam) = 2 k p_1^1$.
\item[$(2)$]
If $\ts \neq 0$, then $\tconj(\lam) \in (2 k p_1^1, 4 k K]$.
\end{itemize}
\end{proposition}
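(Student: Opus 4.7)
The plan is to identify this as a direct corollary of Theorems \ref{th:conjC2} and \ref{th:tconjC2fin} combined with the precise behavior of the Jacobian $J$ at the critical instant $t_0 = 2k p_1^1(k)$, which is controlled by Lemmas \ref{lem:J2p=p11} and \ref{lem:J2z=0}. The value $\tau = (2\f + 2kp_1^1)/(2k)$ in the statement is precisely the value of the $\tau$-coordinate at $t = t_0$, via~\eq{ptauC2}. So the criterion $\ts = 0$ versus $\ts \neq 0$ describes whether $z = \tsp$ vanishes at this instant.

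\medskip

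First I would recall that the proof of Theorem~\ref{th:conjC2} establishes something slightly stronger than the lower bound: for any $\lam \in C_2$ the open interval $t \in (0, 2k p_1^1)$ contains no conjugate time (indeed $J < 0$ strictly on this interval). Hence the value of $\tconj(\lam)$ relative to $t_0$ is determined entirely by whether $J$ vanishes exactly at $t_0$, since $J$ gives the Jacobian of the exponential mapping and conjugate times are characterized by its degeneracy.

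\medskip

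For part (1), when $\ts = 0$ at $t = t_0$, I invoke Lemma~\ref{lem:J2z=0}(1) at $p = p_1^1$ to obtain $J_2(\tau,p,k) = 0$, whence the factorization $J = - \frac{4k}{(1-k^2)(1-k^2 \ssp \tsp)} J_2$ gives $J = 0$ at $t = t_0$. Therefore $t_0$ is itself a conjugate time, and combined with the lower bound $\tconj(\lam) \geq t_0$ from Theorem~\ref{th:conjC2} we conclude $\tconj(\lam) = t_0 = 2 k p_1^1$.

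\medskip

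For part (2), when $\ts \neq 0$, Lemma~\ref{lem:J2p=p11} gives $J_2 > 0$ and $J < 0$ at $t = t_0$, so $t_0$ is not a conjugate time. Together with the absence of conjugate times on $(0, t_0)$ noted above, this yields the strict inequality $\tconj(\lam) > 2 k p_1^1$. The upper bound $\tconj(\lam) \leq 4 k K$ comes directly from Theorem~\ref{th:tconjC2fin}. I do not expect any serious obstacle here: the whole proposition is a reading-off of signs from the already-established lemmas, and the only subtle point is confirming that the ``first'' conjugate time (rather than some later one) is the one captured by these inequalities, which is immediate from the strict sign $J<0$ on $(0, t_0)$ proved in Theorem~\ref{th:conjC2}.
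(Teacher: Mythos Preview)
Your proposal is correct and follows essentially the same approach as the paper: the paper's proof likewise notes that Theorem~\ref{th:conjC2} excludes conjugate times on $(0,2kp_1^1)$, then invokes the two items of Lemma~\ref{lem:J2z=0} together with Theorem~\ref{th:tconjC2fin} to settle both cases. The only cosmetic difference is that for part~(2) you cite Lemma~\ref{lem:J2p=p11} rather than Lemma~\ref{lem:J2z=0}(2), but at $p=p_1^1$ these give the same conclusion $J_2>0$, so the arguments are equivalent.
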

\begin{proof}
Notice first that by Th.~\ref{th:conjC2}, the interval $(0,2 k p_1^1)$ does not contain conjugate times. Then items (1), (2) of this proposition follow directly from the corresponding items of Lemma~\ref{lem:J2z=0}, and from Th.~\ref{th:tconjC2fin}.
\end{proof}

Now we apply Proposition~\ref{propos:conjC2ts=0} to fill the gap appearing in Th.~5.3~\cite{max_sre2}.

\begin{theorem}
\label{th:tcut_bound_fin}
There holds the bound
\be{tcutt_fin}
\tcut(\lam) \leq \tt(\lam) \qquad \forall \ \lam \in C.
\ee 
\end{theorem}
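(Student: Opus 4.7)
The plan is to treat the statement as a completion of Theorem~5.3 of \cite{max_sre2} rather than a proof from scratch. That previous theorem already gives the bound $\tcut(\lam) \leq \tt(\lam)$ in every case except a single degenerate family of covectors in $C_2$, where the Maxwell argument breaks down; the task here is to use the conjugate-time analysis of this section to cover exactly that family.

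First I would recall the Maxwell-point mechanism: if at time $t = t_{\eps^i}^1(\lam)$ the symmetry $\eps^i$ produces a second extremal trajectory of equal length arriving at the same endpoint but \emph{distinct} from $q(\cdot)$, then the geodesic cannot remain optimal past $t$, and so $\tcut(\lam) \leq t_{\eps^i}^1(\lam)$. Taking the minimum over $i$ yields $\tcut(\lam) \leq \tt(\lam)$. This is precisely what \cite{max_sre2} established, with the proviso that the two candidate trajectories are genuinely distinct. The gap occurs in $C_2$ exactly at configurations where the reflected trajectory degenerates and coincides with the original one, so the existence of a second minimizer at $t = \tt(\lam)$ is not automatic.

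Second, I would identify the gap case precisely. The degeneracy corresponds to $\lam \in C_2$ with $\ts = 0$, for which the candidate Maxwell time predicted by $\tt(\lam)$ is $2 k p_1^1(k)$. For this case I would invoke Proposition~\ref{propos:conjC2ts=0}(1), which states exactly that $\tconj(\lam) = 2 k p_1^1(k)$ in this configuration. Since an extremal trajectory loses its local optimality at the first conjugate point (a general fact recorded in item~1 following hypotheses \Ho--\Hf), it follows that
\[
\tcut(\lam) \leq \tconj(\lam) = 2 k p_1^1(k) = \tt(\lam)
\]
in the gap case as well. Combining this with the non-degenerate cases handled in Theorem~5.3 of~\cite{max_sre2} gives the desired bound~\eq{tcutt_fin} for every $\lam \in C$.

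The main obstacle is not the argument itself, which is essentially one line once the right tool is in place, but verifying that the gap left open in Theorem~5.3 of~\cite{max_sre2} corresponds exactly to the case $\lam \in C_2$, $\ts = 0$, and that the value of $\tt$ there is $2kp_1^1(k)$. This requires checking the explicit form of the candidate Maxwell times $t_{\eps^i}^1(\lam)$ in~\eq{ttC1}--\eq{ttC5} and confirming that the collapse of reflected trajectories happens precisely along $\{\ts = 0\}$; once that identification is made, Proposition~\ref{propos:conjC2ts=0} fills the gap with no further calculation.
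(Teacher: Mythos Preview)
Your proposal is correct and follows essentially the same route as the paper: invoke Theorem~5.3 of~\cite{max_sre2} for all covectors except the degenerate family $\lam \in C_2$ with $\ts = 0$ at $p = p_1^1(k)$, and for that family use Proposition~\ref{propos:conjC2ts=0}(1) together with $\tcut(\lam) \leq \tconj(\lam)$ to close the gap. The only minor imprecision is that the condition $\ts = 0$ should be understood as evaluated at $\tau = (2\f + 2k p_1^1)/(2k)$, i.e., at the candidate cut time, so it is indeed a condition on $\lam$ alone.
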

\begin{proof}
If $\lam \in C \setminus C_2$ or $(\lam,t) \in N_2$, $p = p_1^1(k)$, $\ts \neq 0$, then Th.~5.3~\cite{max_sre2}
{}  gives the required bound. And if $(\lam,t) \in N_2$, $p = p_1^1(k)$, $\ts = 0$, then
 the bound is provided by Proposition~\ref{propos:conjC2ts=0} since $\tcut(\lam) \leq \tconj(\lam)$ (local optimality is lost after or simultaneously with the global optimality).
\end{proof}

\subsection{Conjugate points for the cases of critical energy of pendulum}
\label{subsec:conjC345}

The subset $C_3 \cup C_4 \cup C_5$ of the cylinder $C$ is the boundary of the domain $C_1$, see Fig.~2~\cite{max_sre2}.
{} Thus absence of conjugate points for the corresponding extremal trajectories follows by limit passage from $C_1$.

\begin{theorem}
\label{th:conjC345}
If $\lam \in C_3 \cup C_4 \cup C_5$, then the corresponding extremal trajectory $q(t) = \Exp(\lam,t)$ does not have conjugate points for $t> 0$.
\end{theorem}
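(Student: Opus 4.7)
The plan is to derive this theorem by a continuity argument from Theorem~\ref{th:conjC1}, exactly as the author hints: the strata $C_3$, $C_4$, $C_5$ lie in the closure of $C_1$ inside the cylinder $C$, and Proposition~\ref{propos:cor2.2-2.3} is designed precisely to pass absence of conjugate points through such limits.

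First I would fix an arbitrary $\lam \in C_3 \cup C_4 \cup C_5$ together with an arbitrary $\wht > 0$, and verify the topological claim that $\lam$ lies in the boundary of $C_1$. In the elliptic coordinates $(\f, k)$ from Subsec.~3.2~\cite{max_sre2}, oscillating trajectories correspond to $k \in (0,1)$; letting $k \to 1^-$ produces the separatrix stratum $C_3$, while letting the amplitude of oscillation shrink to zero produces the equilibria $C_4$ and $C_5$. Consequently I can construct a continuous curve $\{\lam^s\}_{s\in[0,1]} \subset C$ with $\lam^s \in C_1$ for every $s \in [0,1)$ and $\lam^1 = \lam$. The associated family of normal extremal pairs $(u^s(t), \lam^s_t)$, $t \in [0,+\infty)$, is continuous in $s$ because the pendulum Hamiltonian system~\eq{ham_vert} depends analytically on initial data and the horizontal part~\eq{ham_hor} is then obtained by quadratures.

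Next I would invoke Theorem~\ref{th:conjC1}: for every $s \in [0,1)$ the extremal trajectory $q^s(t) = \Exp(\lam^s, t)$ has no conjugate points for any $t > 0$. Item~$(2)$ of Proposition~\ref{propos:cor2.2-2.3} then yields the same absence of conjugate points along $q^1(t) = \Exp(\lam, t)$ for all $t \in (0, \wht\,]$. Since $\wht$ was arbitrary, the trajectory $q(t)$ has no conjugate points for any $t > 0$, completing the proof.

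The only step that might require care is the construction of the homotopy $s \mapsto \lam^s$ so that the Proposition actually applies: one needs the family of extremal pairs to be genuinely continuous on the full interval $[0,1]$, including across the change in the analytic form of the parametrization of trajectories at the boundary $\partial C_1 = C_3 \cup C_4 \cup C_5$. I do not expect this to be a real obstacle, because continuity at the level of $(\lam^s_t, u^s(t))$ follows from continuous dependence of solutions of the Hamiltonian ODE on initial conditions; the degeneracies only appear in the change of chart on $C$, not in the flow itself. All other ingredients (hypotheses \Ho--\Hf, applicability of Proposition~\ref{propos:cor2.2-2.3}) have already been verified in the preceding subsections for the sub-Riemannian problem~\eq{sys1}--\eq{J}.
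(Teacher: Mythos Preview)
Your proposal is correct and follows essentially the same approach as the paper: construct a continuous curve $\lam^s$ with $\lam^s \in C_1$ for $s \in [0,1)$ and $\lam^1 = \lam$, invoke Theorem~\ref{th:conjC1} for $s<1$, and pass to the limit via Proposition~\ref{propos:cor2.2-2.3}. One small descriptive slip: the unstable equilibrium $C_5$ is reached from $C_1$ by letting the oscillation amplitude grow to its maximum (i.e.\ $k\to 1^-$ along a separatrix endpoint), not by shrinking it to zero; this does not affect the argument, since $C_5$ still lies in $\overline{C_1}$.
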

\begin{proof}
For any $\lam \in C_3 \cup C_4 \cup C_5$, there exists a continuous curve $\lam^s$, $s \in [0,1]$, such that $\lam^s \in C_1$ for $s \in [0, 1)$ and $\lam^1 = \lam$. By Theorem~\ref{th:conjC1}, the trajectories $q^s(t) = \Exp(\lam^s,t)$, $t > 0$, are free of conjugate points. Then Propos.~\ref{propos:cor2.2-2.3} implies the same for the trajectory $q^1(t) = q(t)$.
\end{proof}

\subsection{General bound of conjugate points}
\label{subsec:conjC}

We collect the bounds on the first conjugate time obtained in the previous subsections.

\begin{theorem}
\label{th:conjC}
\begin{itemize}
\item[$(1)$]
If $\lam \in C_1 \cup C_3 \cup C_4 \cup C_5$, then $\tconj(\lam) = + \infty$.
\item[$(2)$]
If $\lam \in C_2$, then $\tconj(\lam) \in [ 2 k p_1^1, 4 k K]$.
\item[$(3)$]
Consequently,
$\tconj(\lam) \geq \tt(\lam)$ for all $\lam \in C$.
\end{itemize}
\end{theorem}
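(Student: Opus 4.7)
The plan is to assemble Theorem~\ref{th:conjC} as a collation of the preceding results of this section, with the only genuine content being the comparison of the conjugate-time bounds against the first Maxwell time $\tt(\lam)$ in part (3).

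For part (1), I would simply invoke Theorem~\ref{th:conjC1}, which rules out conjugate points along trajectories with $\lam \in C_1$, together with Theorem~\ref{th:conjC345}, which handles the boundary strata $C_3 \cup C_4 \cup C_5$ by limit passage from $C_1$. In both cases the conclusion is $\tconj(\lam) = +\infty$. For part (2), Theorem~\ref{th:tconjC2fin} gives exactly the two-sided bound $2kp_1^1(k) \leq \tconj(\lam) \leq 4kK(k)$ for $\lam \in C_2$, which is precisely the claim. No new computation is needed for these two parts.

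Part (3) is the step that deserves a moment of thought. For $\lam \in C_1 \cup C_3 \cup C_4 \cup C_5$ the inequality $\tconj(\lam) \geq \tt(\lam)$ is trivial since the left-hand side is $+\infty$, while $\tt(\lam)$ is finite (or at worst $+\infty$ on the null geodesics, in which case the inequality is still vacuous). The only substantive case is $\lam \in C_2$, where I must show
\[
\tt(\lam) \leq 2 k p_1^1(k).
\]
Here I would appeal to the explicit description of $\tt(\lam) = \min_i t_{\eps^i}^1(\lam)$ recalled from~\cite{max_sre2} via formulas~\eq{ttC1}--\eq{ttC5}: on $C_2$ one of the candidate Maxwell times (the one generated by the reflection that flips the pendulum's rotation, yielding $t_{\eps^2}^1$) is precisely $2 k p_1^1(k)$, hence the minimum $\tt(\lam)$ is at most this value. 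Combining with part (2) gives $\tconj(\lam) \geq 2 k p_1^1(k) \geq \tt(\lam)$.

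The main potential obstacle is purely bookkeeping: one must match the present notation $2 k p_1^1(k)$ against the definition of the first Maxwell time in the $C_2$ stratum from the companion paper~\cite{max_sre2} to be sure that this is indeed the $\eps^2$-Maxwell time (and not some other reflection's Maxwell time), so that $\tt(\lam) \leq 2 k p_1^1(k)$ holds unconditionally on $C_2$. Once this identification is stated, part (3) follows immediately from parts (1), (2) and no further estimation is required.
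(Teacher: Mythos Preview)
Your proposal is correct and matches the paper's intent: Theorem~\ref{th:conjC} is stated as a summary of the preceding results, with no separate proof given, so parts (1) and (2) are exactly the citations you name. For part (3) you overcomplicate the $C_2$ case: by equation~\eq{ttC2} one has $\tt(\lam) = 2kp_1^1(k)$ as a \emph{definition}, so no identification of a specific reflection $\eps^2$ or an inequality $\tt(\lam)\leq 2kp_1^1(k)$ is needed---the equality $\tconj(\lam)\geq 2kp_1^1(k)=\tt(\lam)$ is immediate from part (2).
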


\section{Exponential mapping of open stratas and cut time}
\label{sec:cut}
In this section we 
prove that $\tcut(\lam) = \tt(\lam)$ for any $\lam \in C$ and 
describe the optimal synthesis on an open dense subset of the state space.

\subsection{Decompositions in  preimage and image of exponential mapping}
Denote $\hM = M \setminus \{q_0\}$.
For any point $q \in \hM$ there exists an optimal trajectory $q(s) = \Exp(\lam, s)$ such that $q(t) = q$, $(\lam,t) \in N$. Thus the mapping $\map{\Exp}{N}{\hM}$ is surjective. By Th.~5.4~\cite{max_sre2}, the optimal instant $t$ satisfies the inequality $t \leq \tt(\lam)$.  So the restriction 
\begin{align*}
&\map{\Exp}{\hN}{\hM},\\
&\hN = \{(\lam,t) \in N \mid t \leq \tt(\lam)\},
\end{align*}
is surjective as well.

\subsubsection{Decomposition in $\hN$}
Now we select open dense subsets of $\hN$ such that restriction of $\Exp$ to these subsets will turn out to be a diffeomorphism.
Let
\begin{align}
&\tN= \{(\lam,t) \in \cup_{i=1}^3 N_i \mid t < \tt(\lam), \ \ts \tc \neq 0\}, \label{Ntilde}\\
&N'= \{(\lam,t) \in \cup_{i=1}^3 N_i \mid t = \tt(\lam) \text{ or } \ts \tc = 0\} \cup \hN_4 \cup N_5, \nonumber\\
&\hN_4 = \hN \cap N_4. \nonumber
\end{align}
We have the obvious decomposition $\hN = \tN \sqcup N'$ (we denote by $\sqcup$ the union of mutually non-intersecting sets).

There hold the following implications, see~\cite{max_sre2}:
\begin{align*}
&(\lam,t) \in N_1 \then \tt(\lam) = 2 K, \ \tau \in \R /(4 K \Z),\\
&(\lam,t) \in N_2 \then \tt(\lam) = 2 k p_1^1, \ \tau \in \R /(4 K \Z),\\
&(\lam,t) \in N_3 \then \tt(\lam) = + \infty, \ \tau \in \R.
\end{align*}
Consequently, there holds the following decomposition:
$$
\tN = \sqcup_{i=1}^8 D_i,
$$
where the sets $D_i$, $i = 1, \dots, 8$, are defined by Table~\ref{tab:Di}.

\begin{table}[htbp]
$$
\begin{array}{|c|c|c|c|c|c|c|c|c|}
\hline
D_i & D_1 & D_2 & D_3 & D_4 & D_5 & D_6 & D_7 & D_8 \\
\hline
\lam & C_1^0 & C_1^0 & C_1^0  & C_1^0 & C_1^1  & C_1^1  & C_1^1 & C_1^1 \\
\tau & (3K,4K) & (0,K) & (K,2K) & (2K,3K)  & (-K,0) & (0,K) & (K,2K) & (2K,3K) \\
p    & (0,K) & (0,K) & (0,K) & (0,K) & (0,K) & (0,K) & (0,K) & (0,K) \\
\hline
\lam & C_2^+ & C_2^+ & C_2^-  & C_2^- & C_2^+  & C_2^+  & C_2^- & C_2^- \\
\tau & (-K,0) & (0,K) & (-K,0) & (0,K)  & (K,2K) & (2K,3K) & (-3K,-2K) & (-2K,-K) \\
p    & (0,p_1^1) & (0,p_1^1) & (0,p_1^1) & (0,p_1^1) & (0,p_1^1) & (0,p_1^1) & (0,p_1^1) & (0,p_1^1) \\
\hline
\lam & C_3^{0+} & C_3^{0+} & C_3^{0-}  & C_3^{0-} & C_3^{1+}  & C_3^{1+}  & C_3^{1-} & C_3^{1-} \\
\tau & (-\infty,0) & (0,+\infty) & (-\infty,0) & (0,+\infty)  & (-\infty,0) & (0,+\infty) & (-\infty,0) & (0,+\infty) \\
p    & (0,+\infty) & (0,+\infty) & (0,+\infty) & (0,+\infty) & (0,+\infty) & (0,+\infty) & (0,+\infty) & (0,+\infty) \\
\hline
 \end{array}
$$ 
\caption{Definition of domains $D_i$}\label{tab:Di}
\end{table}

Table~\ref{tab:Di} should be read by columns. For example, the first column means that
\begin{align*}
&D_1 = (D_1 \cap N_1) \sqcup (D_1 \cap N_2) \sqcup (D_1 \cap N_3),\\
&D_1 \cap N_1 = \{(\tau,p,k) \in N_1 \mid \lam \in  C_1^0, \ \tau \in (3K, 4 K), \ p \in (0, K), \ k \in (0,1)\},\\
&D_1 \cap N_2 = \{(\tau,p,k) \in N_2 \mid \lam \in  C_2^+, \ \tau \in (-K, 0), \ p \in (0, p_1^1), \ k \in (0,1)\},\\
&D_1 \cap N_3 = \{(\tau,p,k) \in N_3 \mid \lam \in  C_3^{0+}, \ \tau \in (-\infty, 0), \ p \in (0, +\infty), \ k =1\}.
\end{align*}

Projections of the sets $D_i$ to the phase cylinder of the pendulum $(\g,c)$ are shown at Fig.~\ref{fig:Di}.

\onefiglabelsizen{Dit}{Projections of domains $D_i$ to the phase cylinder of pendulum $(2S^1_{\g}) \times \R^1_c$}{fig:Di}{2.5}

\begin{lemma}
\label{lem:Di}
Each set $D_i$, $i = 1, \dots, 8$, is homeomorphic to $\R^3$.
\end{lemma}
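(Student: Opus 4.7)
My plan is to build an explicit homeomorphism $D_i\to\R^3$ adapted to the three-stratum decomposition $D_i=(D_i\cap N_1)\sqcup(D_i\cap N_2)\sqcup(D_i\cap N_3)$ read off column-by-column from Table~\ref{tab:Di}. The first two strata sit in the oscillating ($\lambda\in C_1$) and rotating ($\lambda\in C_2$) regimes, each parameterised by an elliptic modulus $k\in(0,1)$; the third stratum lives on the separatrix $k=1$ and is a $2$-dimensional slice that interpolates between the other two in the topology $D_i$ inherits from $N=C\times\R_+$.

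First I would rescale each stratum to a standard box. On $D_i\cap N_1$ the extents of $\tau$ and $p$ are bounded by affine functions of $K(k)$, so a rescaling of the form $\tilde\tau=(\tau-\tau_-(k))/K(k)$, $\tilde p=p/K(k)$ identifies this piece with $(0,1)^2\times(0,1)_k$; the stratum $D_i\cap N_2$ is handled identically, with $p_1^1(k)$ replacing the upper bound on $p$. On $D_i\cap N_3$ the value $k=1$ is fixed while $(\tau,p)$ ranges over an open half-plane; two monotone homeomorphisms such as $\tau\mapsto e^\tau$ on $(-\infty,0)$ and $p\mapsto p/(1+p)$ on $(0,+\infty)$ (chosen to match the signs appropriate to the particular $i$) identify the piece with $(0,1)^2$. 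Next I would introduce a single continuous ``energy'' parameter $\kappa$ equal to $k$ on $N_1$, equal to $1$ on $N_3$, and equal to $2-k$ on $N_2$, so that $\kappa$ sweeps $(0,2)$ as one passes through the three strata in order. The composite map $(\tau,p,k)\mapsto(\tilde\tau,\tilde p,\kappa)$ is then a bijection from $D_i$ onto $(0,1)^2\times(0,2)$, whose target is trivially homeomorphic to $\R^3$.

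The hard part is bicontinuity at $\kappa=1$: verifying that the rescaled coordinates defined on $N_1$ and $N_2$ converge, as $k\to1^-$ from their respective sides, to the rescaled coordinates defined on $N_3$, with the convergence matching the topology inherited from $N=C\times\R_+$. This reduces to the classical asymptotics $K(k)\sim\tfrac12\log\!\bigl(16/(1-k^2)\bigr)$ and the known limiting behaviour of $p_1^1(k)$ as $k\to1^-$, together with continuous dependence of the pendulum trajectories on energy across the separatrix after the natural time rescaling, which transports the elliptic coordinates on $C_1\cup C_2$ continuously to the separatrix coordinates on $C_3$. A cleaner alternative, which I would probably adopt for the write-up, is to project $D_i$ onto the phase cylinder $C$, use Fig.~\ref{fig:Di} to identify the projection with an open region of $C$ homeomorphic to $\R^2$, and observe that the $t$-fibre over each such $\lambda$ is the open interval $(0,\tt(\lambda))$ with $\tt$ continuous and positive on this region (setting $\tt=+\infty$ on the $N_3$-slice so the fibre there is $\R_+$); this exhibits $D_i$ as an interval bundle over $\R^2$ with continuous fibre-length, hence as $\R^2\times(0,1)\cong\R^3$.
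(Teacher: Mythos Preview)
Your first approach has a concrete gap that the hand-wave about asymptotics does not close. Along any sequence $(\lambda_n,t_n)\in D_i\cap N_1$ converging in $N=C\times\R_+$ to a point of $D_i\cap N_3$, one has $t_n\to t$ finite and $\varphi_n\to\varphi$ finite (this is what convergence on the phase cylinder means), hence $p_n=t_n/2$ and $\tau_n=\varphi_n+t_n/2$ stay bounded while $K(k_n)\to+\infty$. Your sample rescalings $\tilde p_n=p_n/K(k_n)$ and $\tilde\tau_n=(\tau_n-\tau_-)/K(k_n)$ therefore both tend to $0$, independently of the limit point; they cannot match any nondegenerate coordinate on $N_3$ such as $p/(1+p)$. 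The problem is not a delicate asymptotic but the wrong scale: dividing by $K(k)$ destroys all information in the limit.

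Your second approach is the right idea but contains an error: the $t$-fibre of $D_i$ over a fixed $\lambda$ is \emph{not} $(0,\tt(\lambda))$. Each $D_i$ carries, beyond $t<\tt(\lambda)$, the constraint that $\tau$ lie in one fixed interval between consecutive zeros of $\ts\tc$ (Table~\ref{tab:Di}). Since $\tau$ is an affine function of $t$ at fixed $\varphi$, this cuts the fibre down to a proper subinterval of $(0,\tt(\lambda))$, generally with strictly positive lower endpoint; correspondingly, the projections of different $D_i$ onto $C$ overlap. The interval-bundle picture survives once you write the correct endpoints and check their continuity across the separatrix, but that is exactly the content of the lemma.

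The paper's proof is essentially your second approach done correctly. It introduces a single modulus $k_1$ (equal to $k$ on $C_1$, to $1/k$ on $C_2$, to $1$ on $C_3$), cites~\cite{el_max} for the fact that $(k_1,\varphi,t)$ are regular coordinates on $D_2$, and then describes $D_2$ explicitly as
\[
\{(k_1,\varphi,t)\mid k_1\in(0,+\infty),\ t\in(0,t_1(k_1)),\ \varphi\in(-t,\,-t+t_2(k_1))\}
\]
for continuous $t_1,t_2:(0,+\infty)\to(0,+\infty]$. The $\varphi$-bound here is precisely the $\tau$-interval constraint you omitted. The point is that $(\varphi,t)$, unlike your rescaled $(\tilde\tau,\tilde p)$, are already continuous across $k_1=1$, so no matching argument is needed; the resulting ``moving box'' is then visibly homeomorphic to $\R^3$.
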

\begin{proof}
We prove the statement only for the set $D_2$ since all other sets $D_i$ can be defined in the coordinates $(\tau,p,k)$ by the same inequalities as $D_2$ by a shift of origin in elliptic coordinate $\f$.
Taking into account Table~\ref{tab:Di} and Eqs.~\eq{ptauC1}, \eq{ptauC2}, we get:
\begin{align*}
&D_2 = (D_2 \cap N_1) \sqcup (D_2 \cap N_2) \sqcup (D_2 \cap N_3),\\
&D_2 \cap N_1 = \{(\lam,t) \in N_1 \mid \lam \in  C_1^0, \ k \in (0,1), \ t \in (0, 2K),\ \f \in (-t, -t + 2 K)\},\\
&D_2 \cap N_2 = \{(\lam,t) \in N_1 \mid \lam \in  C_2^+, \ k \in (0,1), \ t \in (0, 2k p_1^1),\ \f \in (-t, -t + 2 k K)\},\\
&D_2 \cap N_3 = \{(\lam,t) \in N_1 \mid \lam \in  C_3^{0+}, \ k =1, \ t \in (0, +\infty),\ \f \in (-t, +\infty)\}.
\end{align*}

As shown in~\cite{el_max}, one can choose regular system of coordinates $(k_1, \f,t)$ on the set $D_2$, where
$$
k_1 = k \text{ for } \lam \in C_1; \quad
k_1 = 1/k \text{ for } \lam \in C_2; \quad
k_1 = 1 \text{ for } \lam \in C_3.
$$
In this system of coordinates
\be{D2=nu}
D_2 = \{\nu = (k_1, \f, t) \mid k_1 \in (0, + \infty), \ t \in (0, t_1(k_1), \ \f \in (-t, -t + t_2(k_2))\},
\ee
where $t_1(k_1) = 2 K(k_1)$ for $k_1 \in (0,1)$, $t_1(k_1) = +\infty$ for $k_1 = 1$,   $t_1(k_1) = (2/k_1) p_1^1(1/k_1)$ for $k_1 \in (1, + \infty)$; and $t_2(k_1) = 2 K(k_1)$ for $k_1 \in (0,1)$, $t_2(k_1) = +\infty$ for $k_1 = 1$,   $t_2(k_1) = (2/k_1) K(1/k_1)$ for $k_1 \in (1, + \infty)$. The both functions $\map{t_i}{(0, + \infty)}{(0, + \infty]}$, $i = 1, 2$, are continuous. Thus representation~\eq{D2=nu} implies that the domain $D_2$ is homeomorphic to $\R^3$.  
\end{proof}

Consequently, all domains $D_i$ are open, connected, and simply connected.
These domains are schematically represented in the left-hand side of Fig.~\ref{fig:globExp}.

\subsubsection{Decomposition in $\hM$}
The state space of the problem admits a decomposition of the form
\begin{align*}
&M = \tM \sqcup M',\\
&\tM = \{q \in M \mid R_1(q) R_2(q) \sin \th \neq 0\},\\
&M' = \{q \in M \mid R_1(q) R_2(q) \sin \th = 0\},
\end{align*}
where 
$$
R_1 = y \cos \frac{\th}{2} - x \sin \frac{\th}{2}, \qquad 
R_2 = x \cos \frac{\th}{2} + y \sin \frac{\th}{2}.
$$
Further,
$$
\tM = \sqcup_{i=1}^8 M_i,
$$
where each of the sets $M_i$ is characterized by constant signs of the functions $\sin \th$, $R_1$, $R_2$ described in Table~\ref{tab:Mi}.

\begin{table}[htbp]
$$
\begin{array}{|c|c|c|c|c|c|c|c|c|}
\hline 
M_i      & M_1      & M_2 & M_3 & M_4 & M_5 & M_6 & M_7 & M_8 \\
\hline 
\sgn(\sin \th) & - & - & - & - & + & + & + & + \\
\hline 
\sgn(R_1) & + & + & - & - & - & - & + & + \\
\hline 
\sgn(R_2) & + & - & - & + & + & - & - & + \\
\hline 
\end{array}
$$
\caption{Definition of domains $M_i$}\label{tab:Mi}
\end{table}

For example,
$
M_1 = \{ q \in M \mid \sin \th < 0, \ R_1 > 0, \ R_2 > 0\}.
$
The numeration of the sets $M_i$ is chosen so that it correspond to numeration of the sets $N_i$ (we prove below in Th.~\ref{th:ExpDidiffeo}  that each mapping $\map{\Exp}{N_i}{M_i}$ is a diffeomorphism). It is obvious that all the sets $M_i$ are diffeomorphic to $\R^3$. 

All the domains $M_i$ are contained in the set $\{q \in M \mid \th \neq 0\}$. At this set $\th$ is a single-valued function, and we choose the branch $\th \in (0, 2 \pi)$. Thus in the sequel we assume that $\th \in (0, 2 \pi)$ on the sets $M_i$. Then $R_1$,  
$R_2$ become single-valued functions, and the last two rows of Table~\ref{tab:Mi} reflect the signs of these single-valued functions $R_i$ on the sets $M_i$. 

The boundary $M'$ of the domain $\tM$ decomposes into four mutually orthogonal surfaces: two planes $\{\th = 0\}$, $\{\th = \pi\}$ and two Moebius strips $\{R_1 = 0\}$, $\{R_2 = 0\}$, see the right-hand side of Fig.~\ref{fig:globExp}, and Fig.~7~\cite{max_sre2}.

\subsection{Diffeomorphic properties of exponential mapping}
\begin{lemma}
\label{lem:ExpDiMi}
For any $i = 1, \dots, 8$, we have $\Exp(D_i) \subset M_i$.
\end{lemma}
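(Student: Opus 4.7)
The plan is to verify, for each $i = 1, \dots, 8$, the three sign conditions on $\sin \th$, $R_1$, $R_2$ that define $M_i$ in Table~\ref{tab:Mi}, where $(x_t, y_t, \th_t) = \Exp(\lam, t)$ for $(\lam, t) \in D_i$. First I would substitute the explicit parametrization of extremal trajectories in elliptic coordinates from Subsec.~3.3~\cite{max_sre2} into the expressions $\sin \th_t$, $R_1(q_t) = y_t \cos(\th_t/2) - x_t \sin(\th_t/2)$, $R_2(q_t) = x_t \cos(\th_t/2) + y_t \sin(\th_t/2)$, so that each becomes a closed form expression in $(\tau, p, k)$ involving the Jacobi functions $\sn$, $\cn$, $\dn$ and the integral $\E$. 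The functions $R_1, R_2$ were in fact designed in~\cite{max_sre2} so that the Maxwell sets associated with the reflections $\eps^i$ are cut out by $R_1 = 0$ or $R_2 = 0$, so their expressions are simple products of Jacobi functions whose signs are easy to track.

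Second, rather than checking all eight cases, I would exploit the group action: by Table~\ref{tab:Di}, the eight domains $D_i$ are the orbit of a single base domain (e.g.\ $D_2$) under the action on $N$ of the symmetry group $G = \{\Id, \eps^1, \dots, \eps^7\}$ generated by the reflections and translation of the pendulum phase cylinder, which consists of the shifts of $\f$ by multiples of $K$ and the sign changes of $\tau$. On the image side, these same symmetries act on $M$ by Figures~\ref{fig:eps1xy}--\ref{fig:eps5xy}, and they permute the eight $M_i$ by flipping the signs of $\sin\th$, $R_1$, $R_2$ in a way consistent with the indexing. So it suffices to prove $\Exp(D_2) \subset M_2$, i.e.\ that for the base parameter ranges ($\lam \in C_1^0$ with $\tau \in (0,K)$, $p \in (0,K)$, and analogously on $C_2^+$, $C_3^{0+}$), one has $\sin \th_t < 0$, $R_1 > 0$, $R_2 < 0$.

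Third, for this base case I would separately handle the three subcases $D_2 \cap N_1$, $D_2 \cap N_2$, $D_2 \cap N_3$. In each subcase the three target expressions factor as products of pieces of the form $\ss$, $\cc$, $\dd$, $\ts$, $\tc$, $\td$, $(p - \Eo)$, $(\Eo - (1-k^2)p)$, etc., whose signs on the specified ranges of $\tau, p, k$ are controlled by standard monotonicity properties of Jacobi functions (combined with inequalities like~\eq{p-E>0}). On the critical stratum $N_3$ the formulas reduce to hyperbolic functions and the verification is elementary; on $N_1, N_2$ one uses that $\ss, \tc > 0$ on the declared $\tau, p$ intervals and that the sign of the $R_i$ is fully determined by a single Jacobi factor.

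The main obstacle will be the third step: keeping the bookkeeping of signs straight across all three factors $\sin\th$, $R_1$, $R_2$ in each of the three subcases $D_2 \cap N_j$, ensuring consistency of the branch $\th \in (0, 2\pi)$ with the sign of $\sin\th$, and confirming that the symmetry action on $N$ matches the action on $M$ modulo the permutation of the $M_i$ indexing. Once the base case and this combinatorial compatibility are in hand, the remaining seven inclusions $\Exp(D_i) \subset M_i$ follow automatically from $\Exp \circ \eps^i = \eps^i \circ \Exp$.
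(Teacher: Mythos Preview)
Your plan is correct and close to the paper's, with two minor differences worth noting.

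First, the paper does not invoke the group $G$ to pass from one $D_i$ to the others; it simply checks $\Exp(D_1)\subset M_1$ and says the remaining inclusions ``are proved similarly.'' Your symmetry reduction is perfectly valid (and is in fact how the paper handles the analogous boundary strata later, in Section~\ref{sec:bound}), but for this lemma the paper just repeats the argument.

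Second, and more interestingly, the paper avoids checking the three subcases $D_1\cap N_j$ separately. It verifies the sign conditions only on $D_1\cap N_1$ (using Eqs.~(5.2)--(5.6) and Lemma~5.2 of~\cite{max_sre2}, exactly as you outline), and then observes that since $D_1$ is connected (Lemma~\ref{lem:Di}) and $\Exp$ is continuous, $\Exp(D_1)$ is a connected subset of $\tM=\sqcup_i M_i$ which meets $M_1$, hence lies in $M_1$. This connectedness shortcut saves you the bookkeeping on $D_2\cap N_2$ and $D_2\cap N_3$ that you flagged as the main obstacle. Your direct approach would work too, but the connectedness argument is cleaner.
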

\begin{proof}
We prove only the inclusion $\Exp(D_1) \subset M_1$ since the rest inclusions are proved similarly.

Let $(\lam,t) \in D_1 \cap N_1 = \{(\lam,t) \in N_1 \mid \lam \in C_1^0, \ \tau \in (3 K, 4 K), \ p \in (0,K), \ k \in (0,1)\}$, see Table~\ref{tab:Di}. Since $\lam \in C_1^0$, then $s_1 = \sgn(\g_t/2) = 1$. Moreover, $\cc \ss \td  > 0$, thus $\sin \th_t < 0$ by virtue of Eq.~(5.2)~\cite{max_sre2}. So $\th_t/2 \in (\pi/2, \pi)$. Consequently, $\cos(\th_t/2) < 0$, $\sin \th_t/2 >0$ on $D_1 \cap N_1$, thus $s_3 = -1$, $s_4 = 1$ in Eq.~(5.3)--(5.6)~\cite{max_sre2}. Then we get $R_1 > 0$ from Eq.~(5.5)~\cite{max_sre2}, and $R_2 > 0$ from Eq.~(5.6)~\cite{max_sre2} and Lemma~5.2~\cite{max_sre2}. We proved that if $\nu \in D_1 \cap N_1$, then $\sin \th_t < 0$, $R_1 > 0$, $R_2 > 0$, i.e., $\Exp(\nu) \in M_1$, see Table~\ref{tab:Mi}. That is, $\Exp(D_1\cap N_1) \subset M_1$.

Since the domains $D_1$ and $M_1$ are connected, it follows that $\Exp(D_1) \subset M_1$.
\end{proof} 

\begin{lemma}
\label{lem:ExpDinondeg}
The restriction $\restr{\Exp}{\tN}$  is nondegenerate.
\end{lemma}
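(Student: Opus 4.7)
The plan is to partition $\tN$ into its three strata $\tN \cap N_i$, $i=1,2,3$, according to the type of the underlying pendulum trajectory, and to check nondegeneracy of the exponential mapping separately on each stratum using the explicit Jacobian formulas already computed together with the conjugate-point results of Section~\ref{sec:conj}.

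First, on $\tN \cap N_1$ (oscillating pendulum, $\lam \in C_1$) I would appeal directly to Theorem~\ref{th:conjC1}: along any extremal with $\lam \in C_1$ there are no conjugate times, so the Jacobian $J$ of $\Exp$ written in coordinates $(\tau,p,k)$ never vanishes. Hence $\restr{\Exp}{\tN \cap N_1}$ is nondegenerate without any further condition. Second, on $\tN \cap N_3$ ($\lam \in C_3$, critical energy $k=1$) the same argument works via Theorem~\ref{th:conjC345}: absence of conjugate points again forces $J \neq 0$.

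The main work sits on $\tN \cap N_2$ (rotating pendulum, $\lam \in C_2$), where the conjugate time is only bounded, not absent. Here I would use both constraints in the definition~\eq{Ntilde} of $\tN$: the inequality $t < \tt(\lam) = 2kp_1^1(k)$ translates, via the parametrization $p = t/(2k)$, into $p \in (0, p_1^1)$; and the condition $\ts \tc \neq 0$ in particular supplies $\ts \neq 0$. These are exactly the hypotheses of item (2) of Lemma~\ref{lem:J2z=0}, so I obtain $J_2(\tau, p, k) > 0$. Since on $C_2$ one has $k \in (0,1)$ and $1 - k^2 \ssp \tsp \geq 1 - k^2 > 0$, the explicit factorization
\[
J \;=\; -\,\frac{4k}{(1-k^2)\,(1-k^2\ssp \tsp)}\,J_2
\]
is well defined with a strictly negative prefactor, and therefore $J < 0$ throughout $\tN \cap N_2$.

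I do not expect a serious obstacle: the content of the proof is purely a bookkeeping match between the constraints defining $\tN$ and the hypotheses of Theorems~\ref{th:conjC1}, \ref{th:conjC345} and Lemma~\ref{lem:J2z=0}. The only point requiring a brief check is that the prefactors appearing in the Jacobian formulas on $N_1$ and on $N_2$ (in particular $1 - k^2\ssp\tsp$) do not vanish on $\tN$, which is automatic from $k<1$ on $C_1 \cup C_2$ and from the fact that $|\sn|\le 1$.
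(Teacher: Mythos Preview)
Your argument is correct and rests on the same underlying results as the paper's, but the paper's proof is a one-liner: it simply invokes item~(3) of Theorem~\ref{th:conjC}, the unified bound $\tconj(\lam)\ge\tt(\lam)$ valid for every $\lam\in C$, so that $t<\tt(\lam)\le\tconj(\lam)$ immediately forces nondegeneracy of $\Exp$ at $(\lam,t)$. Your case split over $N_1$, $N_2$, $N_3$ and, on $N_2$, the direct appeal to the Jacobian formula via Lemma~\ref{lem:J2z=0} rather than to the conjugate-time bound of Theorem~\ref{th:conjC2}, simply unpacks the ingredients already packaged into Theorem~\ref{th:conjC}. Two minor observations: neither proof uses the condition $\tc\neq 0$ from the definition of $\tN$, and even your use of $\ts\neq 0$ on $N_2$ is dispensable, since item~(1) of Lemma~\ref{lem:J2z=0} already handles $\ts=0$ when $p<p_1^1$ strictly.
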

\begin{proof}
If $\nu = (\lam,t) \in \tN$, then $t < \tt(\lam)$. By Th.~\ref{th:conjC}, $\tconj(\lam) \geq \tt(\lam)$, thus $t < \tconj(\lam) = \inf \{ s > 0 \mid \Exp(\lam,s) \text{ is degenerate }\}$. Consequently, the exponential mapping is nondegenerate at the point $(\lam,t)$.
\end{proof}

\begin{lemma}
\label{lem:ExpDiproper}
For any $i = 1, \dots, 8$, the mapping $\map{\Exp}{D_i}{M_i}$ is proper.
\end{lemma}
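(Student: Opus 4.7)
The plan is to show that for any sequence $\nu_n = (\lam_n, t_n) \in D_i$ with $\Exp(\nu_n) = q_n \to q \in M_i$ some subsequence converges in $D_i$; this is equivalent to properness. By Theorem~\ref{th:tcut_bound_fin} the condition $t_n < \tt(\lam_n)$ makes $s \mapsto \Exp(\lam_n, s)$ optimal on $[0, t_n]$, so $t_n$ coincides with the sub-Riemannian distance $d(q_0, q_n)$. Continuity of $d$ then gives $t_n \to t_* = d(q_0, q) \in (0, +\infty)$, positivity being ensured because $q \in M_i \subset \hM$.

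I would next pass to the global coordinates $(k_1, \f, t)$ from the proof of Lemma~\ref{lem:Di}, in which
\[
D_i = \{ k_1 \in (0, +\infty),\ t \in (0, t_1(k_1)),\ \f \in (-t, -t + t_2(k_1)) \}
\]
up to a shift of origin in $\f$. The first task is to rule out $k_{1,n} \to 0$ and $k_{1,n} \to +\infty$ by inspecting the limit of the extremal. As $k_{1,n} \to 0$ the $C_1$-extremals degenerate to rectilinear motions with $\th_n \to 0$, pushing $q$ to $\{\th = 0\} \subset M'$; as $k_{1,n} \to +\infty$ (equivalently $k_n \to 0$ in $C_2$) the pendulum rotates with unbounded angular velocity over the bounded interval $[0, t_n]$, and direct examination of the trajectory formulas of Subsec.~3.3~\cite{max_sre2} shows the limit of $q_n$ again lies in $M'$. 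Both possibilities contradict $q \in M_i \subset \tM$, so $\{k_{1,n}\}$ stays in a compact subinterval of $(0, +\infty)$; since $t_1, t_2$ are bounded there, $\{t_n\}$ and $\{\f_n\}$ are bounded as well.

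After extracting a convergent subsequence $\nu_n \to \nu_* = (k_1^*, \f^*, t_*) \in \overline{D_i}$, I must verify $\nu_* \in D_i$, i.e.\ that it does not sit on any of the four faces $t_* = 0$, $t_* = t_1(k_1^*)$, $\f^* + t_* = 0$, $\f^* + t_* = t_2(k_1^*)$. The first is excluded by $t_* > 0$. The last two force $\tau_* \in K(k_1^*)\cdot \Z$, so $\sn(\tau_*)\cn(\tau_*) = 0$; by formulas~(5.5)-(5.6) of~\cite{max_sre2} this yields $R_1(q) = 0$ or $R_2(q) = 0$, contradicting $q \in \tM$. On the Maxwell face $t_* = t_1(k_1^*) = \tt(\lam_*)$, the reflection $\eps^j$ realizing the infimum in the definition of $\tt$ produces a second preimage $\nu_*' \ne \nu_*$ with $\Exp(\nu_*') = q$, and the fixed-point equation for $\eps^j$ pins $q$ to one of $\{\sin\th = 0\}$, $\{R_1 = 0\}$, $\{R_2 = 0\}$, again a contradiction with $q \in M_i$.

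The main obstacle is the case analysis on the Maxwell face. For each of the eight $D_i$ and each reflection $\eps^j$ that may realize $\tt(\lam_*)$ on the relevant stratum of $C$, one has to check that the corresponding Maxwell image falls into the right component of $M' = \{\th = 0\} \cup \{\th = \pi\} \cup \{R_1 = 0\} \cup \{R_2 = 0\}$. This amounts to a table lookup using the Maxwell structure worked out in~\cite{max_sre2}, but it must be done carefully case by case, and requires matching the decomposition of $\partial D_i$ with the decomposition of $M'$ that was introduced precisely for this purpose.
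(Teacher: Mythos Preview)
Your opening step is circular. Theorem~\ref{th:tcut_bound_fin} gives only the inequality $\tcut(\lam)\le\tt(\lam)$, which is the wrong direction: from $t_n<\tt(\lam_n)$ you cannot conclude $t_n\le\tcut(\lam_n)$, hence you cannot conclude that $s\mapsto\Exp(\lam_n,s)$ is optimal on $[0,t_n]$ and that $t_n=d(q_0,q_n)$. The reverse inequality $\tt(\lam)\le\tcut(\lam)$ is exactly what the paper establishes later in Theorem~\ref{th:tcutC}, and its proof passes through Theorem~\ref{th:ExpDidiffeo}, which in turn relies on the present lemma. So your boundedness argument for $t_n$ assumes what the whole section is trying to prove.

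The paper avoids this by working directly with the explicit trajectory formulas: it fixes a compact $K\subset M_i$, takes uniform bounds $|\sin\th|\ge\eps$ and $\eps\le|R_1|,|R_2|\le 1/\eps$ valid on $K$, and then shows by case analysis in the elliptic coordinates $(\tau,p,k)$ on each stratum $N_1,N_2,N_3$ that any escape of $(\tau_n,p_n,k_n)$ to the boundary or to infinity forces one of $\sin\th$, $R_1$, $R_2$ to violate these bounds. No appeal to optimality or to the sub-Riemannian distance is made. Your treatment of the boundary faces $\tau_*\in K\Z$ via $R_1R_2=0$ is in the same spirit and is fine; but the Maxwell face $t_*=\tt(\lam_*)$ also has to be handled by direct computation of $\Exp$ (as the paper does, obtaining $R_1\to 0$ when $p\to K$ in $N_1$, etc.), not by invoking the Maxwell structure, since at conjugate points in $\Nconj$ there is no second preimage coming from a reflection. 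Finally, your description of the limit $k_1\to 0$ is inaccurate: in $C_1$ this approaches the stable equilibrium $C_4$, giving pure rotation $(x,y)\to(0,0)$ rather than $\th\to 0$.
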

\begin{proof}
Similarly to Lemma~\ref{lem:Di}, we can consider only the case $i = 2$. Let $K \subset M_2$ be a compact, we show that $S = \Exp^{-1}(K) \subset D_2$ is a compact as well, i.e., $S$ is bounded and closed.

There exists $\eps > 0$ such that 
$$
|\sin \th|\geq \eps, \qquad 
\eps \leq |R_1|, |R_2| \leq 1/\eps \qquad \text{for all } q \in K.
$$

(1) We show that $S$ is bounded. By contradiction, let $\nu_n = (k_n, \f_n, t_n) \to \infty$ for some sequence $\{\nu_n\}\subset S$. Then there exists a sequence $\{\nu_n\}\subset S \cap N_i$ for some $i = 1, 2, 3$ with  $\nu_n  \to \infty$. 

Let $S \cap N_1 \ni \nu_n = (k_n,\f_n,t_n) \to \infty$. Then $t_n = 2 p_n \in (0, K(k_n))$, $\tau_n = (\f_n + t_n)/2 \in (0, K(k_n))$. If $k_n$ is separated from 1, then $p_n$, $\tau_n$ are bounded, thus $t_n$, $\f_n$ are bounded, a contradiction. Thus $k_n \to 1$ for a subsequence (we will assume that this holds for the initial sequence).

If $(\g_n,c_n) \to (\pm \pi, 0)$, then $(\th_t, y_t) \to 0$, thus $R_1 \to 0$, a contradiction. Thus the sequence $(\g_n,c_n)$ is separated from the point $(\pm \pi,0)$.

Then there exists a sequence such that $k_n \to 1$ and $\f_n \to \f \in (-\infty, +\infty)$, thus $t_n \to + \infty$, $p_n \to + \infty$, $\tau_n \to + \infty$. Then $(p_n - \E(p_n))/(k_n \sqrt{\Delta}) \to \infty$, $f_2(p_n,k_n)  /(k_n \sqrt{\Delta}) \to \infty$. 

If $\cn(\tau_n)$ is separated from zero, then $R_1 \to \infty$ (see Eq.~(5.5)~\cite{max_sre2}).  And if $\cn(\tau_n)$ is not separated from zero, then there exists a sequence such that $\cn(\tau_n) \to 0$, thus $\sn(\tau_n)$ is separated from zero, then $R_2 \to \infty$ (see Eq.~(5.6)~\cite{max_sre2}).

So the hypothesis $S \cap C_1 \ni \nu_n = (k_n, \f_n,t_n) \to \infty$ leads to a contradiction.

Similarly the hypotheses $C \cap C_i \ni \nu_n   \to \infty$, $i = 2, 3$, lead to a contradiction.

Thus the set $S = \Exp^{-1}(K)$ is bounded.

(2) We show that $S$ is closed. Let $\{\nu_n\}\subset S$, we have to prove that there exists a subsequence $\nu_{n_k}$ converging in $D_2$. By contradiction, let $\nu_n \to \infty$ or $\nu_n \to \nu \in \partial D_2$.

Consider the case $\nu_n = (\tau_n, p_n, k_n) \in S \cap N_1$. 

If $k_n \to 0$, then $(x,y) \to 0$, thus $R_1, R_2 \to 0$, a contradiction. 

Let $k_n \to 1$. If $(\g_n,c_n) \to (\pm\pi,0)$, then $(\th,y) \to (0,0)$, thus $R_1 \to 0$, a contradiction. 

If $(\g_n,c_n) \to (\g,c) \neq (\pm \pi, 0)$, then $\nu \in N_3$, a contradiction. 

Thus $k_n \to k \in (0,1)$. Then $\tau_n \to \tau \in [3K(k), 4 K(k)]$. If $\tau = 3 K$, then $R_1 \to 0$, and if $\tau = 4 K$, then $R_2 \to 0$, a contradiction. Thus $\tau_n \to \tau \in (3K(k), 4 K(k))$.

Further, $p_n \to p \in [0, K(k)]$. If $p = 0$, then $t = 0$ and $R_1, R_2 \to 0$. If $p = K$, then $R_1 \to 0$. Thus $p_n \to p \in (0,K)$. 

So $(\tau_n, p_n, k_n) \to (\tau,p,k) \in N_1$, a contradiction.

We proved that any sequence $\nu_n \in S \cap N_1$ contains a subsequence converging in $D_2$. Similarly one proves the same for a sequence $\nu_n \in S \cap N_i$, $i = 2,3$.

Thus any sequence $\nu_n \in S$ contains a subsequence converging in $D_2$, thus converging in $S$. So the set $S = \Exp^{-1}(K)$ is closed.
\end{proof}

\begin{theorem}
\label{th:ExpDidiffeo}
For any $i = 1, \dots, 8$, we have $\Exp(D_i) \subset M_i$, and the mapping $\map{\Exp}{D_i}{M_i}$ is a diffeomorphism.
\end{theorem}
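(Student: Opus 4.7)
The plan is to combine the three preceding lemmas into a standard topological argument: a proper local diffeomorphism from a connected manifold to a connected and simply connected manifold of the same dimension is a global diffeomorphism. The inclusion $\Exp(D_i) \subset M_i$ has been established in Lemma~\ref{lem:ExpDiMi}, so what remains is to prove bijectivity together with smoothness of the inverse.

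First, Lemma~\ref{lem:ExpDinondeg} applies since $D_i \subset \tN$: the differential of $\Exp$ is nondegenerate at every point of $D_i$, so by the inverse function theorem the restriction $\restr{\Exp}{D_i} : D_i \to M_i$ is a smooth local diffeomorphism. In particular the image $\Exp(D_i)$ is open in $M_i$. Next, by Lemma~\ref{lem:ExpDiproper} the same mapping is proper; since $M_i$ is locally compact Hausdorff, a proper continuous map to $M_i$ is closed, hence $\Exp(D_i)$ is also closed in $M_i$. The connectedness of $M_i \cong \R^3$ then forces $\Exp(D_i) = M_i$, i.e., the mapping is surjective.

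A proper surjective local diffeomorphism between smooth manifolds is a covering map, finite-sheeted because each fiber is a compact discrete set. The base $M_i$ is diffeomorphic to $\R^3$ and therefore simply connected, while the covering space $D_i$ is connected by Lemma~\ref{lem:Di} (in fact homeomorphic to $\R^3$). A connected covering of a simply connected space must be one-sheeted, so $\restr{\Exp}{D_i} : D_i \to M_i$ is a bijective local diffeomorphism, i.e., a global diffeomorphism.

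I expect no serious obstacle in executing this plan: all the real work has been packaged into the three lemmas above, which were tailored precisely so that both stratifications $\{D_i\}$ and $\{M_i\}$ consist of simply connected pieces matched by $\Exp$. The one subtle point worth emphasizing in the proof is why surjectivity onto the whole $M_i$ (not just some open proper subset) holds; this is exactly where the open-plus-closed argument based on Lemma~\ref{lem:ExpDiproper} is indispensable, and it is also the place that motivates carving $\tM$ along $\{\sin\th = 0\}$, $\{R_1 = 0\}$, $\{R_2 = 0\}$ so that each resulting cell $M_i$ is simply connected.
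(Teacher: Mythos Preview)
Your proof is correct and follows essentially the same approach as the paper: combine Lemmas~\ref{lem:ExpDiMi}, \ref{lem:ExpDinondeg}, and~\ref{lem:ExpDiproper} to obtain a proper local diffeomorphism, hence a covering, and then invoke simple connectedness of $M_i$ together with connectedness of $D_i$ to conclude it is one-sheeted. Your write-up is in fact slightly more explicit than the paper's, spelling out the open-plus-closed argument for surjectivity and the role of connectedness of $D_i$, both of which the paper leaves implicit.
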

\begin{proof}
The inclusion $\Exp(D_i) \subset M_i$ was proved in Lemma~\ref{lem:ExpDiMi}. The mapping $\map{\Exp}{D_i}{M_i}$ is smooth, nondegenerate (Lemma~\ref{lem:ExpDinondeg}), and proper (Lemma~\ref{lem:ExpDiproper}), thus it is a covering. Since $M_i$ is simply connected, the mapping $\map{\Exp}{D_i}{M_i}$ is a diffeomorphism.
\end{proof}

\begin{lemma}
\label{lem:ExpN45}
$\Exp(N_4) = \{q \in M \mid R_1 = R_2 = 0\} = \{q \in M \mid x = y = 0\}$, $\Exp(N_5) = \{q \in M \mid R_1  = 0, \ R_2 \neq 0, \ \th = 0\} = \{q \in M \mid x \neq 0, \ y = 0, \ \th  = 0\}$.
\end{lemma}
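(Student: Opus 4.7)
The idea is to exploit the fact that $C_4$ and $C_5$ are precisely the equilibrium loci of the pendulum~\eq{ham_vert}, so along the corresponding extremals the vertical variables $(\g_t,c_t)$ are constant and the horizontal subsystem~\eq{ham_hor} reduces to a constant-coefficient linear ODE that can be integrated by inspection, with no need for the Jacobi-function parametrisation. Specifically, from \cite{max_sre2} the set $C_4$ consists of the stable equilibria $c=0$, $\g\in\{0,2\pi\}$ on the double-cover cylinder $2 S^1_\g\times\R_c$, while $C_5$ consists of the unstable equilibria $c=0$, $\g=\pm\pi$.

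For $\lam\in C_4$ I would substitute $\sin(\g_t/2)\equiv 0$, $\cos(\g_t/2)\equiv \pm 1$ into~\eq{ham_hor}, getting $x_t\equiv y_t\equiv 0$ and $\th_t=\mp t$. As $t$ ranges over $(0,+\infty)$ and both signs arise from the two connected components of $C_4$, the endpoint $\th_t$ sweeps out all of $S^1_\th$, yielding $\Exp(N_4)=\{x=y=0\}$. For $\lam\in C_5$ the substitution $\sin(\g_t/2)\equiv \pm 1$, $\cos(\g_t/2)\equiv 0$ gives $\dot\th\equiv 0$, $\dot x=\pm\cos\th$, $\dot y=\pm\sin\th$; combined with the initial condition $q(0)=(0,0,0)$ this integrates to $\th_t\equiv 0$, $y_t\equiv 0$, $x_t=\pm t$, whence $\Exp(N_5)=\{y=0,\ \th=0,\ x\neq 0\}$.

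To match the $(x,y,\th)$-descriptions with the $(R_1,R_2,\th)$-descriptions, I would observe that for each fixed $\th$ the linear map $(x,y)\mapsto(R_1,R_2)$ is orthogonal (its matrix has determinant $-1$), hence $R_1=R_2=0$ if and only if $x=y=0$, which gives the first equality. The second equality is immediate from the definitions: at $\th=0$ one has $R_1=y$ and $R_2=x$, so the conditions $R_1=0$, $R_2\neq 0$, $\th=0$ are literally $y=0$, $x\neq 0$, $\th=0$. The entire argument is routine; the only point that needs any care is the bookkeeping of the two sign choices in $C_4$ and in $C_5$, which one must track to verify that the endpoint mapping actually covers the full claimed set and not only half of it.
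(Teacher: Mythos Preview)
Your proposal is correct and takes essentially the same approach as the paper: both arguments rest on the explicit formulas for extremal trajectories with $\lam\in C_4\cup C_5$, which the paper simply cites from Subsec.~3.3 of~\cite{max_sre2}, while you re-derive them on the spot by observing that $C_4,C_5$ are equilibria of~\eq{ham_vert} and integrating~\eq{ham_hor} with constant coefficients. Your additional remark that $(x,y)\mapsto(R_1,R_2)$ is orthogonal, and the explicit identification $R_1=y$, $R_2=x$ at $\th=0$, cleanly handle the matching of the two descriptions.
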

\begin{proof}
Follows immediately from the corresponding formulas for extremal trajectories of Subsec.~3.3~\cite{max_sre2}.
\end{proof}

\begin{lemma}
\label{lem:ExpN'}
$\Exp(N') \subset M'$.
\end{lemma}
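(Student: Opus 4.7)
The plan is to decompose $N'$ according to its definition,
\[
N' = (\hN_4 \cup N_5) \cup A \cup B, \quad A = \{(\lam,t) \in \cup_{i=1}^3 N_i \mid \ts\tc = 0\}, \quad B = \{(\lam,t) \in \cup_{i=1}^3 N_i \mid t = \tt(\lam)\},
\]
and to verify in each of the three pieces that the endpoint $q_t = \Exp(\lam,t)$ satisfies at least one of $\sin\th = 0$, $R_1 = 0$, $R_2 = 0$, placing it in $M'$. The first piece is settled immediately by Lemma~\ref{lem:ExpN45}: its image lies in $\{x = y = 0\} \cup \{\th = 0\} \subset M'$.

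For the piece $A$, I would read off $\sin\th_t$ from the explicit formula~(5.2) of \cite{max_sre2}. As already exploited in the proof of Lemma~\ref{lem:ExpDiMi}, on $N_1$ this formula expresses $\sin\th_t$ as a nonzero factor times $\cc\ss\td$, and the analogous factorization in the remaining regions $N_2$, $N_3$ follows from the unified Jacobi-function parametrization of extremals given in Subsec.~3.3~\cite{max_sre2}. Since $\td$ is nowhere zero, the hypothesis $\ts\tc = 0$ forces $\sin\th_t = 0$, so $q_t \in M'$.

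For the piece $B$, I would use the Maxwell-point interpretation of the cut-time bound $\tt(\lam) = \min_i t_{\eps^i}^1(\lam)$ proved in \cite{max_sre2}. If $t = t_{\eps^i}^1(\lam)$, then the reflected extremal $\Exp(\eps^i(\lam),\cdot)$ meets $\Exp(\lam,\cdot)$ at time $t$ at the common endpoint $q_t$, which is therefore a fixed point of the involution $\bar\eps^i$ induced by $\eps^i$ on $M$. A case-by-case inspection of the seven involutions $\bar\eps^1,\ldots,\bar\eps^7$ (compare Figs.~\ref{fig:eps1xy}--\ref{fig:eps5xy}) shows that each fixed-point set is contained in one of the three coordinate surfaces $\{\sin\th = 0\}$, $\{R_1 = 0\}$, $\{R_2 = 0\}$, so $q_t \in M'$. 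The main obstacle is precisely this last step: identifying for each of the seven $\bar\eps^i$ which coordinate surface of $M'$ contains its fixed set. This is a finite check from the explicit formulas for the $\eps^i$ in \cite{max_sre2}, but it carries the content of the proof, the other two cases being essentially immediate.
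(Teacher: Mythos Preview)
Your decomposition is fine and pieces $\hN_4\cup N_5$ and $B$ are handled correctly, but piece $A$ contains a real error. You write that formula~(5.2) of \cite{max_sre2} gives $\sin\th_t$ as a nonzero factor times $\cc\ss\td$, and then conclude that $\ts\tc=0$ forces $\sin\th_t=0$. But $\cc\ss\td=\cn p\cdot\sn p\cdot\dn\tau$ involves the $p$-variables $\ss,\cc$, not the $\tau$-variables $\ts,\tc$; the hypothesis $\ts\tc=0$ says nothing about $\cc\ss$. Concretely, on $N_{35}'$ (where $\tau=0$, hence $\ts=0$) the formulas in Lemma~\ref{lem:N1'M1'} give $\sin(\th/2)=\sn p$, $\cos(\th/2)=-\cn p$, so $\sin\th_t=-2\sn p\,\cn p\neq 0$ for $p\in(0,K)$. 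Thus $\sin\th_t$ does \emph{not} vanish on $A$ in general.

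The correct mechanism is different: formulas~(5.5), (5.6) of \cite{max_sre2} show that $R_1$ carries a factor $\tc$ and $R_2$ a factor $\ts$ (compare Lemmas~\ref{lem:N1'M1'} and~\ref{lem:N17'M17'}: $\tau=0$ gives $R_2=0$, $\tau=K$ gives $R_1=0$). Hence $\tc=0\Rightarrow R_1=0$ and $\ts=0\Rightarrow R_2=0$, so $\ts\tc=0$ gives $R_1R_2=0$ and $q_t\in M'$. Once this is fixed, your argument goes through. For piece $B$, the fixed-point-of-$\bar\eps^i$ route works (and Table~\ref{tab:eps17R12th} makes the check immediate), though the paper simply reads all three cases off the explicit formulas~(5.2)--(5.11) directly: for $p=K$ in $N_1$ one gets $\th=\pi$, for $p=p_1^1$ in $N_2$ one gets $R_2=0$, and for $t=\pi$ in $N_4$ one gets $\th=\pi$.
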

\begin{proof}
Follows from formulas~(5.2)--(5.11)~\cite{max_sre2}.
\end{proof}

Theorem~\ref{th:ExpDidiffeo} implies the following statement.

\begin{corollary}
\label{cor:ExpNtilde}
The mapping $\map{\Exp}{\tN}{\tM}$ is a diffeomorphism.
\end{corollary}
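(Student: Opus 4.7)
The plan is to deduce the corollary directly from Theorem~\ref{th:ExpDidiffeo} by assembling the eight piecewise diffeomorphisms $\map{\Exp}{D_i}{M_i}$ into a single map on the disjoint unions $\tN = \sqcup_{i=1}^8 D_i$ and $\tM = \sqcup_{i=1}^8 M_i$. First I would invoke Theorem~\ref{th:ExpDidiffeo} to conclude that $\Exp(D_i) = M_i$ for each $i$, which immediately gives $\Exp(\tN) \subset \tM$.

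For injectivity on $\tN$, I would argue as follows: if $\Exp(\nu) = \Exp(\nu') = q \in M_i$, then, since the $M_j$ are pairwise disjoint and $\Exp(D_j) \subset M_j$, both $\nu$ and $\nu'$ must lie in $D_i$; then injectivity of $\Exp|_{D_i}$ from Theorem~\ref{th:ExpDidiffeo} forces $\nu = \nu'$. Surjectivity onto $\tM$ is automatic, since each $M_i$ is already exhausted by $\Exp(D_i)$.

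Smoothness of $\Exp|_{\tN}$ is inherited from the global smoothness of $\Exp$. Smoothness of the inverse is a local question: near any $q \in \tM$, the point $q$ lies in some open $M_i$, on which $(\Exp|_{D_i})^{-1}$ is smooth by Theorem~\ref{th:ExpDidiffeo}. Combining these, $\map{\Exp}{\tN}{\tM}$ is a bijective local diffeomorphism, hence a diffeomorphism.

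I do not foresee any serious obstacle: the whole argument is a routine bookkeeping consequence of Theorem~\ref{th:ExpDidiffeo} together with the disjointness of the two decompositions, which has already been established in the preceding lemmas. The only point one must keep in mind is that $\tN$ and $\tM$ are genuine disjoint unions of the $D_i$ and $M_i$ respectively, not merely coverings with overlap, so that the eight local diffeomorphisms glue unambiguously into a global one.
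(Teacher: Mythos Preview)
Your argument is correct and is exactly the approach the paper intends: the paper's own proof is the single line ``Theorem~\ref{th:ExpDidiffeo} implies the following statement,'' and you have simply spelled out the routine bookkeeping (disjointness of the $D_i$ and $M_i$, injectivity, surjectivity, local smoothness of the inverse) that this implication rests on.
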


In view of Lemma~\ref{lem:ExpN'}, for any $q \in \tM$ there exists a unique $\nu = (\lam,t) = \Exp^{-1}(q) \in \tN$, $\lam = \lam(q)$, $t = t(q)$. 

The diffeomorphism $\map{\Exp}{\tN=\cup_{i=1}^8 D_i}{\tM = \cup_{i=1}^8 M_i}$ is schematically shown at Fig.~\ref{fig:globExp}.

\onefiglabelsizen
{globExpn}{Global structure of exponential mapping}{fig:globExp}{10}

\subsection{Cut time}

\begin{theorem}
\label{th:Mtildesynth}
For any $q_1 \in \tM$, let $(\lam_1,t_1) = \Exp^{-1}(q_1) \in \tN$. Then the extremal trajectory $q(s) = \Exp(\lam_1,s)$ is optimal with $q(t_1) = q_1$. 

Thus optimal synthesis on the domain $\tM$ is given by 
$$
u_i(q) = h_i(\lam), \quad i = 1, 2, \qquad (\lam, t) = \Exp^{-1}(q) \in \tN, \qquad q \in \tM.
$$
\end{theorem}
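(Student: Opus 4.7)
The plan is to leverage the diffeomorphism property established in Corollary~\ref{cor:ExpNtilde} together with the cut-time bound of Theorem~\ref{th:tcut_bound_fin}. Given $q_1 \in \tM$, existence of an optimal trajectory from $q_0$ to $q_1$ (which is a standard consequence of Filippov's theorem for this contact sub-Riemannian structure, and which is in any case recalled at the start of this section) yields some $(\bar\lam, \bar t) \in N$ with $\Exp(\bar\lam, \bar t) = q_1$ and optimality of $s \mapsto \Exp(\bar\lam, s)$ on $[0, \bar t]$. By definition of cut time, $\bar t \leq \tcut(\bar\lam)$, and by Theorem~\ref{th:tcut_bound_fin}, $\tcut(\bar\lam) \leq \tt(\bar\lam)$. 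Hence $(\bar\lam, \bar t) \in \hN$.

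The first step is to upgrade this to $(\bar\lam, \bar t) \in \tN$. Since $\hN = \tN \sqcup N'$ and Lemma~\ref{lem:ExpN'} shows $\Exp(N') \subset M'$, while $q_1 \in \tM$ and $\tM \cap M' = \emptyset$, the pair $(\bar\lam, \bar t)$ cannot lie in $N'$. Therefore $(\bar\lam, \bar t) \in \tN$.

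The second step is uniqueness. By Corollary~\ref{cor:ExpNtilde}, the map $\Exp : \tN \to \tM$ is a diffeomorphism, hence injective. Thus both $(\bar\lam, \bar t)$ and $(\lam_1, t_1)$ map to $q_1$ and must coincide: $(\bar\lam, \bar t) = (\lam_1, t_1)$. Consequently, the trajectory $q(s) = \Exp(\lam_1, s)$ is the optimal trajectory from $q_0$ to $q_1$, proving the first assertion.

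The synthesis formula then follows immediately from Pontryagin's Maximum Principle: along a normal extremal, the optimal control in feedback form is $u_i(q) = h_i(\lam)$ where $\lam$ is the extremal lift of $q$, and here $\lam$ is precisely the first component of $\Exp^{-1}(q) \in \tN$. The only subtle point in the whole argument is the a priori step of producing an optimal trajectory and identifying it as a normal extremal; this is handled by the standard existence result for sub-Riemannian minimizers together with the absence of strictly abnormal minimizers in the contact case, both of which were invoked already in the opening of Section~\ref{sec:cut}.
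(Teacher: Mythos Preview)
Your proof is correct and follows essentially the same route as the paper: produce an optimal pair $(\bar\lam,\bar t)\in\hN$ via the cut-time bound, exclude $N'$ using $\Exp(N')\subset M'$ (Lemma~\ref{lem:ExpN'}), and then invoke the diffeomorphism of Corollary~\ref{cor:ExpNtilde} to identify $(\bar\lam,\bar t)$ with $(\lam_1,t_1)$. The paper additionally cites Lemma~\ref{lem:ExpN45} alongside Lemma~\ref{lem:ExpN'}, but the latter already covers all of $N'$, so your single citation suffices.
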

\begin{proof}
Let $q_1 \in \tM$. There exists $\nu_1 = (\lam_1, t_1) \in \hN = \tN \sqcup N'$ such that the trajectory $q(s) = \Exp(\lam_1,s)$ is optimal and $q(t_1) = \Exp(\nu_1) = q_1$. By Lemmas~\ref{lem:ExpN45} and~\ref{lem:ExpN'}, we have $\nu_1 \in \tN$. By Cor.~\ref{cor:ExpNtilde}, there exists a unique $\nu_1 \in \tN$ such that $\Exp(\nu_1) = q_1$. So $q(s) = \Exp(\lam_1, s)$ is a unique optimal trajectory coming to $q_1$.
\end{proof}

In work~\cite{max_sre2} we introduced the following function $\map{\tt}{C}{(0, + \infty]}$ on the phase cylinder of pendulum $(2 S^1_{\g}) \times \R_c = C = \sqcup_{i=1}^5 C_i$:
\begin{align}
&\lam \in C_1 \then \tt(\lam) = 2 K(k), \label{ttC1} \\
&\lam \in C_2 \then \tt(\lam) = 2 k p_1^1(k), \label{ttC2}\\
&\lam \in C_3 \then \tt(\lam) = +\infty, \label{ttC3}\\
&\lam \in C_4 \then \tt(\lam) = \pi, \label{ttC4}\\
&\lam \in C_5 \then \tt(\lam) = +\infty. \label{ttC5}
\end{align}and proved the inequality $\tcut(\lam) \leq \tt(\lam)$. Now we prove the corresponding equality.

\begin{theorem}
\label{th:tcutC}
For any $\lam \in C$ we have $\tcut(\lam) = \tt(\lam)$.
\end{theorem}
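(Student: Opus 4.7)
The bound $\tcut(\lam)\le\tt(\lam)$ is Theorem~\ref{th:tcut_bound_fin}, so my plan is to establish the reverse inequality $\tcut(\lam)\ge\tt(\lam)$. The engine will be Theorem~\ref{th:Mtildesynth} together with Corollary~\ref{cor:ExpNtilde}: since $\Exp:\tN\to\tM$ is a diffeomorphism, for every $q_1\in\tM$ the unique length-minimizer from $q_0$ to $q_1$ is obtained by inverting this diffeomorphism.

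First I treat the generic case $\lam\in C_1\cup C_2\cup C_3$. For any $t\in(0,\tt(\lam))$ with $\ts\tc\neq 0$, the pair $(\lam,t)$ lies in $\tN$ and $\Exp(\lam,t)\in\tM$; Theorem~\ref{th:Mtildesynth} applied at $q_1=\Exp(\lam,t)$ then forces $\Exp(\lam,\cdot)|_{[0,t]}$ to be the unique optimal trajectory reaching $q_1$, so $\tcut(\lam)\ge t$. Because $\tau$ is affine in $t$ for fixed $\lam$, the locus $\{\ts\tc=0\}$ is discrete in $t$, so the admissible values of $t$ are dense in $(0,\tt(\lam))$; passing to the supremum yields $\tcut(\lam)\ge\tt(\lam)$.

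For the critical-energy cases $\lam\in C_4\cup C_5$ I argue by continuity. Given $t<\tt(\lam)$, I construct a continuous path $\lam^s\in C$, $s\in[0,1]$, with $\lam^0=\lam$ and $\lam^s\in C_1\cup C_2\cup C_3$ for $s\in(0,1]$, arranged so that $\tt(\lam^s)>t$ for all sufficiently small $s>0$: if $\lam\in C_4$ (so $\tt(\lam)=\pi$), perturb into $C_1$ with $k_s\to 0^+$ and use $2K(k_s)\to\pi^+$; if $\lam\in C_5$ (so $\tt(\lam)=+\infty$), perturb into $C_1\cup C_2$ with $k_s\to 1$ and use $2K(k_s)\to+\infty$, respectively $2k_s p_1^1(k_s)\to+\infty$. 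The preceding paragraph gives $\tcut(\lam^s)\ge\tt(\lam^s)>t$, so $\Exp(\lam^s,\cdot)|_{[0,t]}$ is optimal and realizes the sub-Riemannian distance $d(q_0,\Exp(\lam^s,t))=t$ (the Hamiltonian is normalized to $1/2$, so SR length equals $t$). Letting $s\to 0$, continuity of $\Exp$ and of $d$ gives $d(q_0,\Exp(\lam,t))=t$, which matches the length of $\Exp(\lam,\cdot)|_{[0,t]}$; hence this trajectory is optimal. Taking the supremum over $t<\tt(\lam)$ completes the argument.

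The main obstacle is the asymptotic bookkeeping at the boundary $C_4\cup C_5$: one has to choose the side of the perturbation so that the strict inequality $\tt(\lam^s)>t$ persists, which is a short case-by-case check using the explicit formulas~\eq{ttC1}--\eq{ttC5} together with the monotonicity of $K(k)$ and $p_1^1(k)$ near $k=0,1$. Once these asymptotics are recorded, the rest is a routine closure argument resting on Theorem~\ref{th:Mtildesynth} and on the continuity of the sub-Riemannian distance function.
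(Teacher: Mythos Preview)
Your proof is correct. The generic case $\lam\in C_1\cup C_2\cup C_3$ is handled exactly as in the paper: pick $t<\tt(\lam)$ with $\ts\tc\neq 0$, so that $(\lam,t)\in\tN$, invoke Theorem~\ref{th:Mtildesynth}, and pass to the supremum over the dense set of such $t$.

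The difference is in the boundary cases $\lam\in C_4\cup C_5$. The paper dispatches these by direct inspection: for $\lam\in C_4$ the extremal is the Riemannian geodesic $(0,0,\pm t)$ on the fiber circle, which is optimal up to the antipode $\th=\pi$, giving $\tcut=\pi=\tt$; for $\lam\in C_5$ the extremal is the straight line $(\pm t,0,0)$, optimal for all time, giving $\tcut=+\infty=\tt$. Your route instead perturbs $\lam$ into the generic region, uses the already-established equality $d(q_0,\Exp(\lam^s,t))=t$ there, and closes by continuity of $\Exp$ and of the sub-Riemannian distance. Both arguments are valid; the paper's is shorter and self-contained, while yours is more robust in that it does not require recognizing the explicit form of the trajectories on $C_4$ and $C_5$, only the asymptotics $2K(k)\to\pi^+$ as $k\to 0^+$ and $2K(k),\,2kp_1^1(k)\to+\infty$ as $k\to 1^-$.
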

\begin{proof}
We proved the inequality $\tcut(\lam) \leq \tt(\lam)$ in Th.~5.4~\cite{max_sre2}.

(1) Consider first the generic case $\lam_1 \in \cup_{i=1}^3 C_i$. There exists $t_1 \in (0, \tt(\lam_1))$ and arbitrarily close to $\tt(\lam_1)$ such that $\sn \tau_1 \, \cn \tau_1 \neq 0$. Then $\nu_1 = (\lam_1, t_1) \in \tN$, thus $q_1 = \Exp(\nu_1) \in \tM$. By Th.~\ref{th:Mtildesynth}, the trajectory $q(s) = \Exp(\lam_1,s)$, $s \in [0,t_1]$, is optimal, thus $t_1 \leq \tcut (\lam_1)$. 

So there exists $t_1 \in (0, \tt(\lam_1))$ arbitrarily close to $\tt(\lam_1)$ such that $t_1 \leq \tcut(\lam_1)$. Consequently, $\tt(\lam_1) \leq \tcut(\lam_1)$.

We proved that $\tcut(\lam_1) = \tt(\lam_1)$ for any $\lam_1 \in \cup_{i=1}^3 C_i$.

(2) 
If $\lam \in C_4$, then the extremal trajectory $(x,y,\th) = (0,0, \pm t)$ is a Riemannian geodesic for the restriction of the sub-Riemannian problem on $\SE(2)$ to the circle $\{(0,0,\th) \mid \th \in S^1\}$. It is optimal up to the antipodal point, thus $\tcut(\lam) = \pi = \tt(\lam)$.

(3)
In the case $\lam \in C_5$ the extremal trajectory is a line $(x,y,\th) = (\pm t, 0, 0)$, thus it is optimal forever: $\tcut(\lam) = + \infty = \tt(\lam)$.
\end{proof}

\section[Structure of exponential mapping at the boundary of open stratas]
{Structure of exponential mapping \\at the boundary of open stratas}
\label{sec:bound}
In this section we describe the action of the exponential mapping
$$
\map{\Exp}{N' = \hN \setminus \tN}{M' = \hM \setminus \tM}.
$$

\subsection{Decomposition of the set $N'$}
\label{subsec:decompN'}
Consider the following subsets of the set
$$
N' = \hN \setminus \tN = \{  (\lam,t) \in \cup_{i=1}^3 N_i \mid t = \tt(\lam) \text{ or } \ts \tc = 0\} \cup \hN_4 \cup N_5:
$$
\begin{align}
&\Ncut = \{ (\lam,t) \in N \mid t = \tt(\lam) \}, \label{Ncutdef}\\
&\Nconj =  \{ (\lam,t) \in N_2  \mid t = \tt(\lam), \ \ts = 0  \}, \label{Nconjdef}\\
&\NMax = \Ncut \setminus \Nconj, \qquad \Nrest = N' \setminus \Ncut. \nonumber
\end{align}
So we have the following decompositions:
\be{NhatNconj}
\hN = \tN \sqcup N', \qquad 
N' = \Ncut \sqcup \Nrest, \qquad
\Ncut = \NMax \sqcup \Nconj.
\ee

In order to study the structure of the exponential mapping at the set $N'$, we need further decomposition 
into subsets $N_i'$, $i = 1, \dots, 58$, defined by Table~\ref{tab:Ni'}.

\begin{table}
$$
\begin{array}{|c|c|c|c|c|c|c|c|c|}
\hline
N_i' & N_1' & N_2' & N_3' & N_4' & N_5' & N_6'& N_7'& N_8'\\
\hline
\lam  & C_1^0 & C_1^0 &  C_1^0 & C_1^0  &C_1^1 &C_1^1 &C_1^1 &C_1^1\\
\hline
\tau & (0, K) & (K,2K) & (2K,3K) & (3K,4K)& (0, K) & (K,2K) & (2K,3K) & (3K,4K)\\
\hline
p & K & K& K& K& K& K & K & K\\
\hline
\end{array}
$$
$$
\begin{array}{|c|c|c|c|c|c|c|c|c|}
\hline
N_i' & N_9' & N_{10}' & N_{11}' & N_{12}' & N_{13}' & N_{14}'& N_{15}'& N_{16}'\\
\hline
\lam  & C_2^+ & C_2^+ &  C_2^+ & C_2^+   &C_2^- &C_2^-  &C_2^- &C_2^-\\
\hline
\tau & (3K,4K) & (0, K) & (K,2K) & (2K,3K) & (-3K, -2K) & (-2K,-K) & (-K,0) & (0,K)\\
\hline
p & p_1^1 & p_1^1 & p_1^1 & p_1^1& p_1^1& p_1^1 & p_1^1 & p_1^1\\
\hline
\end{array}
$$
$$
\begin{array}{|c|c|c|c|c|c|c|c|c|c|c|c|c|c|c|}
\hline
N_i' & N_{17}' & N_{18}' & N_{19}' & N_{20}' & N_{21}' & N_{22}'& N_{23}'& N_{24}'& N_{25}'& N_{26}'& N_{27}' & N_{28}' & N_{29}' & N_{30}'\\
\hline
\lam  & C_1^0 & C_1^0 &  C_1^0 & C_1^0  &C_1^1 &C_1^1  &C_1^1 &C_1^1&  C_2^+&  C_2^+ &  C_2^+ &  C_2^+ &  C_2^- &  C_2^- \\
\hline
\tau & 0 & K & 2K & 3K & 0 & K & 2K & 3K& 3K & 0 & K & 2K & K & -2K\\
\hline
p & K & K & K & K& K & K & K & K& p_1^1 & p_1^1 & p_1^1 & p_1^1 & p_1^1 & p_1^1\\
\hline
\end{array}
$$
$$
\begin{array}{|c|c|c|c|c|c|c|c|c|c|c|}
\hline
N_i' & N_{31}' & N_{32}' & N_{35}' & N_{36}' & N_{37}' & N_{38}'& N_{39}'& N_{40}'& N_{41}'& N_{42}'\\
\hline
\lam  & C_2^- & C_2^- &  C_1^0 & C_1^0   &C_1^0  &C_1^0  &C_1^1 & C_1^1 & C_1^1 & C_1^1\\
\hline
\tau & -K & 0 & 0 & K & 2K & 3K & 0  & K & 2K & 3K\\
\hline
p & p_1^1 & p_1^1 & (0,K) & (0,K)& (0,K)& (0,K) & (0,K)  & (0,K) & (0,K)& (0,K)\\
\hline
\end{array}
$$
$$
\begin{array}{|c|c|c|c|c|c|c|c|c|}
\hline
N_i' & N_{47}' & N_{48}' & N_{49}' & N_{50}' & N_{51}' & N_{52}'& N_{53}'& N_{54}'\\
\hline
\lam  & C_3^{0+} & C_3^{0-} &  C_3^{1+} & C_3^{1-}   &C_2^+  &C_2^+   &C_2^+ & C_2^+ \\
\hline
\tau & 0 & 0 & 0 & 0 & 3K & 0 & K  & 2K \\
\hline
p & (0, + \infty) & (0, + \infty) & (0, + \infty) & (0, + \infty) & (0,p_1^1)& (0,p_1^1) & (0,p_1^1)  & (0,p_1^1) \\
\hline
\end{array}
$$
$$
\begin{array}{|c|c|c|c|c|}
\hline
N_i' & N_{55}' & N_{56}' & N_{57}' & N_{58}'  \\
\hline
\lam  & C_2^{-} & C_2^{-} &  C_2^{-} & C_2^{-}    \\
\hline
\tau & K & -2K & -K & 0    \\
\hline
p & (0,p_1^1) & (0,p_1^1)  & (0,p_1^1)  & (0,p_1^1)   \\
\hline
\end{array}
$$
$$
\begin{array}{|c|c|c|c|c|c|c|}
\hline
N_i' & N_{33}' & N_{34}' & N_{43}' & N_{44}' & N_{45}'& N_{46}' \\
\hline
\lam  & C_4^{0} & C_4^{1} &  C_4^{0} & C_4^{1}& C_5^{0} & C_5^{1}    \\
\hline
t & \pi & \pi  & (0,\pi)  & (0,\pi) & (0,+ \infty)  & (0,+ \infty)  \\
\hline
\end{array}
$$
\caption{Definition of sets $N_i'$}\label{tab:Ni'}
\end{table}

 Images of the projections
\begin{align*}
&N_i' \cap \{ t < \tt(\lam) , \ \ts \tc = 0 \}  \to \{p = 0\}, \qquad (k,\tau,p) \mapsto (k,\tau,0), \\
&N_i' \cap \{ t = \tt(\lam) \}  \to \{p = 0\}, \qquad (k,\tau,p) \mapsto (k,\tau,0), \\
\end{align*}
are shown respectively at Figs.~\ref{fig:N'lessttt}, \ref{fig:N't=tt}.

\onefiglabelsizen{figNlessttt}{$N_i' \cap \{ t < \tt(\lam), \ \ts \tc = 0 \} $}{fig:N'lessttt}{8}

\onefiglabelsizen{figNcutt1}{$N_i' \cap \{ t = \tt(\lam) \} $}{fig:N't=tt}{8}

Table~\ref{tab:Ni'} provides a definition of the sets $N_i'$; e.g.,  the second column of this table means that
$$
N_1' = \{ (\lam,t) \in N \mid \lam \in C_1^0, \ \tau \in (0,K), \ p \in (0,K), \ k \in (0,1)\}.
$$

Introduce the following index sets for numeration of the subsets $N_i'$:
\begin{align}
&I = \{ 1, \dots, 58\}, \quad
C = \{ 1, \dots, 34\}, \quad
J = \{ 26, 28, 30, 32\}, \label{ICJ}\\
&R = \{ 35, \dots, 58\}, \quad
X = C \setminus J. \label{RX}
\end{align}
Notice that $I = C \sqcup R$, $J \subset C$.

\begin{lemma}
\label{lem:decompN'}
\begin{itemize}
\item[$(1)$]
We have $N_i' \cap N_j' = \emptyset$ for any distinct $i, j \in I$.
\item[$(2)$]
There are the following decompositions of subsets of the set $N'$:
$$
\Ncut = \cup _{i \in C} N_i', \qquad
\Nconj = \cup _{i \in J} N_i', \qquad
\Nrest = \cup _{i \in R} N_i', 
$$
thus
\begin{align}
&\NMax = \cup_{i \in X} N_i',  \label{NMaxd} \\
&N' = \sqcup_{i \in I} N_i'. \label{N'=N_i'}
\end{align}
\end{itemize}
\end{lemma}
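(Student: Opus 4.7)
The plan is to verify, by direct inspection of Table~\ref{tab:Ni'}, that the defining conditions of the sets $N_i'$ partition $N'$ and restrict correctly to the named substrata. The work is essentially bookkeeping; no new analytic input is needed beyond the parametrizations \eqref{ptauC1}, \eqref{ptauC2}, the explicit formulas \eqref{ttC1}--\eqref{ttC5} for $\tt$, and the location of the zeros of $\sn \tau$ and $\cn \tau$ in each connected component of $C$.

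For item (1), I would argue pairwise disjointness column by column. The $\lam$-row of Table~\ref{tab:Ni'} already separates the columns according to the connected components $C_1^{0,1}, C_2^{\pm}, C_3^{0\pm,1\pm}, C_4^{0,1}, C_5^{0,1}$ of $C$. Within columns sharing the same $\lam$-component, the $\tau$-row provides either pairwise disjoint open subintervals of one period, or distinct singleton values from $\{0, \pm K, \pm 2K, 3K\}$; failing that, the $p$- or $t$-row provides either an open subinterval or a single value ($K$, $p_1^1(k)$, $\pi$). A column-by-column comparison finishes (1).

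For item (2), I would first translate $t = \tt(\lam)$ into elliptic coordinates via \eqref{ptauC1}, \eqref{ptauC2} and \eqref{ttC1}--\eqref{ttC5}: it is equivalent to $p = K$ when $\lam \in C_1$, to $p = p_1^1(k)$ when $\lam \in C_2$, to $t = \pi$ when $\lam \in C_4$, and impossible for $\lam \in C_3 \cup C_5$ since $\tt = +\infty$ there. Matching these alternatives against columns $1$--$34$ of Table~\ref{tab:Ni'} yields $\Ncut = \cup_{i \in C} N_i'$. For $\Nconj$ one imposes in addition $\lam \in C_2$ and $\sn \tau = 0$, i.e.\ $\tau \in 2K\Z$; intersecting with the $\tau$-values that actually appear in columns $25$--$32$ picks out precisely $\tau \in \{-2K, 0, 2K\}$, namely $N_{26}', N_{28}', N_{30}', N_{32}'$, so $\Nconj = \cup_{i \in J} N_i'$.

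For $\Nrest = N' \setminus \Ncut$ the remaining condition, on $(\lam,t) \in \cup_{i=1}^3 N_i$ with $t < \tt(\lam)$, is $\ts \tc = 0$, i.e.\ $\tau \in K\Z$; together with $\hN_4 \cap \{t < \pi\}$ and $N_5$ this delivers columns $35$--$42$ for $C_1^{0,1}$, $51$--$58$ for $C_2^{\pm}$, $47$--$50$ for $C_3^{0\pm,1\pm}$ (where $\sn(\tau, 1) = \tanh \tau$ forces $\tau = 0$, so only one singleton per component), and $43$--$46$ for $C_4$ and $C_5$, giving $\Nrest = \cup_{i \in R} N_i'$. The identities \eqref{NMaxd} and \eqref{N'=N_i'} then follow from $\NMax = \Ncut \setminus \Nconj$, $N' = \Ncut \sqcup \Nrest$, and part (1). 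The only mildly delicate step will be choosing, in each $\lam$-component, the fundamental domain for $\tau$ that makes the singletons of $K\Z$ fall into the exact columns assigned by Table~\ref{tab:Ni'}; beyond that, the argument is entirely mechanical.
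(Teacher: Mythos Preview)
Your proposal is correct and takes essentially the same approach as the paper: the paper's own proof is a single sentence saying that both items follow directly from Table~\ref{tab:Ni'}, the definitions of $N'$, $\NMax$, $\Ncut$, $\Nconj$, $\Nrest$, and the decompositions~\eqref{NhatNconj}. You have simply spelled out explicitly the column-by-column bookkeeping that the paper leaves implicit.
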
 
\begin{proof}
Both statements (1), (2) follow directly from Table~\ref{tab:Ni'}, definitions of the sets $N'$, $\NMax$, $\Ncut$, $\Nconj$, $\Nrest$, and decompositions~\eq{NhatNconj}.
\end{proof}

\subsection{Exponential mapping of the set $N_{35}'$}
In order to describe the image $\Exp(N_{35}')$, we will need the following function:
\be{R12}
R_1^2(\th) = 2 (\arth(\sin(\th/2)) - \sin(\th/2)), \qquad \th \in [0, \pi).
\ee 
It is obvious that $R_1^2 \in C^{\infty}[0, \pi)$, $R_1^2(0) = 0$, $R_1^2(\th) > 0$ for $\th \in (0,\pi)$, $\lim_{\th \to \pi -0} R_1^2(\th)= + \infty$, and
\be{dR12}
\der{R_1^2}{\th} (0) = 0.
\ee
A plot of the function $R_1^2(\th)$ is given at Fig.~\ref{fig:R12}.

\onefiglabel
{R12t}{Plot of $R_1 = R_1^2(\th)$}{fig:R12}

Define the following subset of the set $M'$:
$$
M_{35}' = \{ q \in M \mid \th \in (\pi, 2 \pi), \ R_1 \in (0, R_1^2(2 \pi - \th)), \ R_2 = 0 \}.
$$

\begin{lemma}
\label{lem:N1'M1'}
The mapping $\map{\Exp}{N_{35}'}{M_{35}'}$ is a diffeomorphism of 2-dimensional manifolds.
\end{lemma}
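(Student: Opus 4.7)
The plan is to adapt to the two-dimensional boundary stratum $N_{35}'$ the three-step scheme used for the open strata in Theorem~\ref{th:ExpDidiffeo}: establish the inclusion $\Exp(N_{35}') \subset M_{35}'$, show that the restriction is non-degenerate, and show it is proper; then the standard covering argument on a simply connected target gives a diffeomorphism. From Table~\ref{tab:Ni'}, $N_{35}'$ is parametrized by $(k,p) \in (0,1) \times (0, K(k))$ with $\tau = 0$ fixed, and $M_{35}'$ is the open region under the graph of~\eqref{R12} on the Moebius strip $\{R_2 = 0\}$; both are smooth 2-manifolds.

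For the inclusion I would substitute $\sn \tau = 0$, $\cn \tau = \pm 1$, $\dn \tau = 1$ into formulas~(5.2)--(5.6) of~\cite{max_sre2}; these directly yield $R_2 \equiv 0$, $\sin \theta < 0$ (so $\theta \in (\pi, 2 \pi)$ in the chosen branch), and $R_1 > 0$. The upper bound $R_1 < R_1^2(2 \pi - \theta)$ would be established by computing the limit $k \to 1$ of $R_1$ and $\theta$ along $N_{35}'$: in this degeneration $\lam$ leaves $C_1^0$ and passes into $C_3^{0\pm}$, the Jacobi functions reduce to hyperbolic ones, and integrating the resulting formulas reproduces precisely~\eqref{R12} with $2\pi - \theta$ playing the role of the independent variable. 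This is the main technical step of the proof; the explicit closed form of~\eqref{R12} only emerges after carrying out these substitutions cleanly and inverting the resulting relation.

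For non-degeneracy, since $t = 2p < 2 K(k) = \tt(\lam)$ on $N_{35}'$, Theorem~\ref{th:conjC} gives that $d\Exp\colon T_\nu N \to T_{\Exp(\nu)} M$ is an isomorphism. Because $R_2 \equiv 0$ on $\Exp(N_{35}')$ and the equation $\tau = 0$ is transverse to $N_{35}'$---indeed, $\partial R_2 / \partial \tau|_{\tau=0} \ne 0$ as can be read off from formula~(5.6) of~\cite{max_sre2}---the differential of $\restr{\Exp}{N_{35}'}$ is an isomorphism of the two tangent planes. Properness then follows the pattern of Lemma~\ref{lem:ExpDiproper}: for a compact $K \subset M_{35}'$, a sequence $\nu_n \in \Exp^{-1}(K)$ can escape $N_{35}'$ only via $p_n \to 0$, $k_n \to 0$, $p_n \to K(k_n)$, or $k_n \to 1$, and in each case the image leaves every compact of $M_{35}'$ (the first two force $R_1 \to 0$, the third forces $\theta \to \pi$, and the fourth drives $R_1$ to the upper boundary $R_1^2(2\pi - \theta)$). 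A non-degenerate proper smooth map between connected 2-manifolds whose target is simply connected is a diffeomorphism, which finishes the proof.
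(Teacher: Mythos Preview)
Your overall three-step strategy (inclusion, non-degeneracy, properness, then covering) matches the paper's, and your treatment of non-degeneracy is in fact more careful than the paper's one-line appeal to Theorem~\ref{th:conjC}: you rightly note that non-degeneracy of the full $3$-dimensional $\Exp$ does not automatically pass to the restriction, and the transversality check via $\partial R_2/\partial\tau|_{\tau=0}\neq 0$ is the correct way to close that.

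There is, however, a genuine gap in your inclusion argument. You propose to establish $R_1 < R_1^2(2\pi-\theta)$ by ``computing the limit $k\to 1$ of $R_1$ and $\theta$ along $N_{35}'$''. That limit indeed identifies the curve $R_1 = R_1^2(2\pi-\theta)$ (this is the content of the later Lemma~\ref{lem:N2'M2'} for $N_{47}'$), but knowing the limiting curve does \emph{not} by itself prove that the image for $k\in(0,1)$ lies strictly below it. One would additionally need a monotonicity statement in $k$ at fixed $\theta$, or an argument excluding crossings, and you have not supplied either. The paper handles exactly this point with a topological argument (its parts (a)--(c)): it shows $\Exp(N_{35}')$ is an open connected subset of the half-strip $S=\{\theta\in(\pi,2\pi),\ R_1>0,\ R_2=0\}$, computes $\Exp(\partial N_{35}')=\partial M_{35}'$ explicitly on all four boundary arcs $n_1,\dots,n_4$, verifies $\Exp(N_{35}')\cap M_{35}'\neq\emptyset$ and $\Exp(N_{35}')\neq S$, and then derives $\Exp(N_{35}')\subset M_{35}'$ by a contradiction/connectedness argument. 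Your properness sketch already contains the raw material for this (the four escape routes $p\to 0$, $k\to 0$, $p\to K$, $k\to 1$ are precisely the four boundary arcs $n_1,\dots,n_4$), so you can repair the proof by reorganizing: first show $\Exp(N_{35}')\subset S$ from the explicit formulas, then use the boundary computation together with openness and connectedness to force $\Exp(N_{35}')=M_{35}'$, and only afterwards invoke properness and simple connectedness for the diffeomorphism statement.
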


To be more precise, we state that $\Exp(N_{35}')  = M_{35}'$ and $\restr{\Exp}{N_{35}'}$ is a diffeomorphism of the manifold $N_{35}'$ onto the manifold $M_{35}'$. Below we will write such statements briefly as in Lemma~\ref{lem:N1'M1'}.

\begin{proof}
Formulas~(5.2)--(5.6)~\cite{max_sre2} imply that in the domain $N_{35}'$ we have the following:
\begin{align}
&\sin(\th/2) = \ss,    \qquad &&\cos(\th/2) = -\cc, \label{sinth2N1'M1'}\\
&R_1 = 2(p - \E(p))/k, \qquad &&R_2 = 0.          \label{R1N1'M1'}
\end{align}
By Th.~\ref{th:conjC}, the restriction $\restr{\Exp}{N_{35}'}$ is nondegenerate. Thus the set $\Exp(N_{35}')$ is an open connected domain in the 2-dimensional manifold
$$
S = \{ q \in M \mid \th \in (\pi, 2 \pi),\ R_1 > 0, \ R_2 = 0\}.
$$
On the other hand, the set $N_{35}'$ is an open connected simply connected domain in the 2-dimensional manifold
$$
T = \{ \nu \in N_1 \mid \tau = 0, \ p \in (0,K), \ k \in (0,1)\}.
$$ 
In the topology of $T$, we have
\begin{align*}
&\partial N_{35}' = \cup_{i=1}^4 n_i, \\
&n_1 = \{\nu \in N_1 \mid \tau = 0, \ p =0, \ k \in [0,1]\}, \\
&n_2 = \{\nu \in N_1 \mid \tau = 0, \ p \in[0,\pi/2] \ k =0\}, \\
&n_3 = \{\nu \in N_1 \mid \tau = 0, \ p = K(k), \ k \in [0,1)\}, \\
&n_4 = \{\nu \in N_1 \mid \tau = 0, \ p \in [0, + \infty), \ k =1]\}, 
\end{align*}
see Fig.~\ref{fig:N35'}.

\twofiglabelh
{N1t}{Domain $N_{35}'$}{fig:N35'}
{M1t}{Domain $M_{35}'$}{fig:M35'}
{4.5}

We have
\begin{align*}
&\Exp(n_1) = m_1 = \{ q \in M \mid \th = 2 \pi, \ R_1 = 0, \ R_2 = 0 \},\\
&\Exp(n_2) = m_2 = \{ q \in M \mid \th \in[\pi,  2 \pi], \ R_1 = 0, \ R_2 = 0 \},\\
&\Exp(n_3) = m_3 = \{ q \in M \mid \th = \pi, \ R_1 > 0, \ R_2 = 0 \},\\
&\Exp(n_4) = m_4 = \{ q \in M \mid \th \in[\pi,  2 \pi], \ R_1 = R_1^2(2 \pi - \th), \ R_2 = 0 \},
\end{align*}
moreover, $\partial M_{35}' = \cup _{i=1}^4 m_i$, see Fig.~\ref{fig:M35'}.

\partproof{a}
We show that $\Exp(N_{35}') \cap M_{35}' \neq \emptyset$.
Formulas~\eq{sinth2N1'M1'}, \eq{R1N1'M1'} give the following asymptotics as $k \to 0$:
$$
\th = 2 \pi - 2 p + o(1), \qquad R_1 = k(p/2 - (\sin 2p)/4) + o(k).
$$
There exists $(p,k)$ close to $(\pi/2,0)$ such that the corresponding point $(\th, R_1)$ is arbitrarily close to $(0,0)$, with $\th > 0$, $R_1 > 0$. Thus there exists $\nu \in N_{35}'$ such that $\Exp(\nu) \in M_{35}'$.

\partproof{b}
We show that $\Exp(N_{35}') \neq S$. Formulas~\eq{sinth2N1'M1'}, \eq{R1N1'M1'} yield the following chain:
$$
\th\to 2 \pi - 0 \then \ss \to 0 \then p \to 0 \then R_1 \to 0.
$$
Thus there exists $q \in S \setminus \Exp(N_{35}')$.

\partproof{c}
We prove that $\Exp(N_{35}') \subset M_{35}'$. By contradiction, suppose that there exists a point $q_1 \in \Exp(N_{35}') \setminus M_{35}'$. Since the mapping $\restr{\Exp}{N_{35}'}$ is nondegenerate, we can choose this point such that $q_1 \in \Exp(N_{35}') \setminus \cl( M_{35}')$.

Choose any point $q_2 \in S \setminus \cl(M_{35}')$. Connect the points $q_1$, $q_2$ by a continuous curve in $S$, and find at this curve a point $q_3 \in S \setminus \Exp(N_{35}')$, $q_3 \notin \cl(M_{35}')$ such that there exists a converging sequence $q^n \to q_3$, $q^n = \Exp(\nu^n) \in \Exp(N_{35}')$. Further, there exist a subsequence $\nu^{n_i} \in N_{35}'$ converging to a finite or infinite limit. If $\nu^{n_i} \to \bnu \in N_{35}'$,then $q_3 = \Exp(\bnu) \in \intt \Exp(N_{35}')$ by nondegeneracy of $\restr{\Exp}{N_{35}'}$, a contradiction. If $\nu^{n_i} \to \bnu \in \partial N_{35}'$, then 
$$
q_3 = \Exp(\bnu) \in \Exp(\partial N_{35}') = \partial M_{35}' \subset \cl(M_{35}'),
$$
a contradiction. Finally, if $\nu^{n_i} \to \infty$, then at this sequence $k^{n_i} \to 1 - 0$, $p^{n_i} \to \infty$, thus $R_1(q^{n_i}) \to \infty$, a contradiction. 

Consequently, $\Exp(N_{35}') \subset M_{35}'$.

\partproof{d}
The mapping $\map{\Exp}{N_{35}'}{M_{35}'}$ is a diffeomorphism since $\restr{\Exp}{N_{35}'}$ is nondegenerate and proper, and $N_{35}'$, $M_{35}'$ are connected and simply connected.
\end{proof}

\subsection{Exponential mapping of the set $N_{47}'$}
Define the following subset of $M'$:
$$
M_{47}' = \{q \in M \mid \th \in (\pi, 2 \pi), \ R_1 = R_1^2(2 \pi - \th), \ R_2 = 0 \}.
$$

\begin{lemma}
\label{lem:N2'M2'}
The mapping $\map{\Exp}{N_{47}'}{M_{47}'}$ is a diffeomorphism of 1-dimensional manifolds.
\end{lemma}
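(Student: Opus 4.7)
The plan is to follow the same scheme as in Lemma~\ref{lem:N1'M1'}, but the argument simplifies substantially because both $N_{47}'$ and $M_{47}'$ are one-dimensional, so explicit monotonicity replaces the covering/properness argument used there.

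First I would specialize the formulas~(5.2)--(5.6)~\cite{max_sre2} to the stratum $N_{47}'$, on which $\lam \in C_3^{0+}$ (so $k = 1$) and $\tau = 0$. Passing to this limit the Jacobi functions degenerate to hyperbolic ones, and the same expressions that yielded~\eq{sinth2N1'M1'}--\eq{R1N1'M1'} for $N_{35}'$ collapse to
$$
\sin(\th/2) = \tanh p, \quad \cos(\th/2) = -1/\cosh p, \quad R_1 = 2(p - \tanh p), \quad R_2 = 0,
$$
parametrized by $p \in (0, +\infty)$. The signs of $\cos(\th/2)$ and $\sin(\th/2)$ force $\th \in (\pi, 2\pi)$.

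Next I would verify the inclusion $\Exp(N_{47}') \subset M_{47}'$. The identity $\arth(\tanh p) = p$ combined with $\sin((2\pi - \th)/2) = \sin(\pi - \th/2) = \sin(\th/2) = \tanh p$ yields
$$
R_1^2(2\pi - \th) = 2(\arth(\tanh p) - \tanh p) = 2(p - \tanh p) = R_1,
$$
so every $\Exp(\nu)$ with $\nu \in N_{47}'$ lies on the curve $M_{47}'$. To upgrade this to a diffeomorphism I would parametrize $M_{47}'$ by $\th \in (\pi, 2\pi)$ and show that the composite map $p \mapsto \th(p)$ is a smooth bijection with nowhere-vanishing derivative. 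Differentiating $\cos(\th/2) = -1/\cosh p$ gives $d\th/dp = -2/\cosh p$, which is nonzero on $(0, +\infty)$, and the boundary limits $\th \to 2\pi$ as $p \to 0^+$ and $\th \to \pi^+$ as $p \to +\infty$ are immediate from the explicit formulas. This produces a smooth strictly monotonic bijection onto $(\pi, 2\pi)$, hence a diffeomorphism.

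The only delicate point is verifying that the sign conventions $(s_1, s_3, s_4)$ of~(5.3)--(5.6)~\cite{max_sre2} on $C_3^{0+}$ agree with those on $D_1 \cap N_1 \subset C_1^0$ determined in the proof of Lemma~\ref{lem:ExpDiMi}; this is routine since $C_3^{0+}$ is the $k \to 1-0$ limit of $C_1^0$ from the appropriate side of the separatrix, and the hyperbolic formulas above are exactly what one obtains by that limit passage.
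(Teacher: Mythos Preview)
Your proof is correct and follows the same approach as the paper: obtain the explicit hyperbolic formulas by passing to $k=1$ in \eq{sinth2N1'M1'}--\eq{R1N1'M1'}, then read off the diffeomorphism. The paper's proof is a two-line sketch that simply states the formulas and asserts ``This coordinate representation shows that $\map{\Exp}{N_{47}'}{M_{47}'}$ is a diffeomorphism''; you have filled in the details it suppresses (the verification that $R_1 = R_1^2(2\pi-\th)$ via $\arth(\tanh p)=p$, the monotonicity of $p\mapsto\th$, and the boundary limits).
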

\begin{proof}
We pass to the limit $k \to 1 - 0$ in formulas~\eq{sinth2N1'M1'}, \eq{R1N1'M1'} and obtain for $\nu \in N_{47}'$:
$$
\sin(\th/2) = \tanh p, \quad \cos(\th/2) = - 1/\cosh p, \quad R_1 = 2(p - \tanh p), \quad R_2 = 0.
$$
This coordinate representation shows that $\map{\Exp}{N_{47}'}{M_{47}'}$ is a diffeomorphism.
\end{proof}

\subsection{Exponential mapping of the set $N_{26}'$}
Before the study of $\restr{\Exp}{N_{52}'}$, postponed till the next subsection, we need to consider the set $N_{26}'$ contained in the boundary of $N_{52}'$. In order to parametrize regularly the image $\Exp(N_{26}')$, we introduce the necessary functions.

Recall that the function $p = p_1^1(k)$, $k \in [0,1)$, is the first positive root of the function $f_1(p) = \cc (\E(p) - p) - \dd \ss$, see Eq.~(5.11) and Cor.~5.1~\cite{max_sre2}. Define the function
\be{v11}
v_1^1(k) = \am(p_1^1(k),k), \qquad k \in [0,1).
\ee

\begin{lemma}
\label{lem:v11}
\begin{itemize}
\item[$(1)$]
The number $v = v_1^1(k)$ is the first positive root of the function
$$
h_1(v,k) = E(v,k) - F(v,k) - \sqrt{1 - k^2 \sin^2 v} \tan v, \qquad k \in [0, 1).
$$
\item[$(2)$]
$v_1^1 \in C^{\infty}[0,1)$.
\item[$(3)$]
$v_1^1(k) \in (\pi/2, \pi)$ for $ k \in (0,1)$; moreover, $v_1^1(0) = \pi$.
\item[$(4)$]
The function $v_1^1(k)$ is strictly decreasing at the segment $ k \in [0, 1)$.
\item[$(5)$]
$\lim_{v \to 1 - 0} v_1^1(k) = \pi/2$, thus setting $v_1^1(1) = \pi/2$, we obtain $v_1^1 \in C[0,1]$.
\item[$(6)$]
$v_1^1(k) = \pi - (\pi/2) k^2 + o(k^2)$, $k \to + 0$.
\end{itemize}
\end{lemma}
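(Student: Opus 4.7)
The plan is to rewrite the defining condition $f_1(p_1^1,k)=0$ in terms of the Jacobi amplitude $v=\am(p,k)$ and then analyze the resulting scalar function by the implicit function theorem. Under this substitution $\sn p = \sin v$, $\cn p = \cos v$, $\dn p = \sqrt{1-k^2\sin^2 v}$, $\E(p) = E(v,k)$, and $p = F(v,k)$, so a direct calculation shows
\[
f_1(p,k) = \cn p\,(\E(p) - p) - \dn p\,\sn p = \cos v \cdot h_1(v,k).
\]
Hence a zero of $f_1$ is either a zero of $\cos v$ or of $h_1$. Since $f_1(K,k) = -\sqrt{1-k^2} \neq 0$, the factor $\cos v$ contributes no root, and because $\am$ is a monotone bijection of $[0,+\infty)$ for $k\in[0,1)$, the first positive root $p_1^1(k)$ of $f_1$ corresponds exactly to the first positive root $v_1^1(k) = \am(p_1^1,k)$ of $h_1$. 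This proves~(1).

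Next a routine computation gives $\pder{h_1}{v} = -\sqrt{1-k^2\sin^2 v}/\cos^2 v$, which is strictly negative off $v\in\pi/2+\pi\Z$. To prove~(3) I would observe that $h_1(v,k)\to+\infty$ as $v\to\pi/2^+$ (the term $-\sqrt{1-k^2\sin^2 v}\tan v$ blows up to $+\infty$ since $\tan v \to -\infty$) while $h_1(\pi,k) = 2(E(k)-K(k)) < 0$ for $k\in(0,1)$; combined with strict monotonicity of $h_1$ in $v$, this yields a unique simple root in $(\pi/2,\pi)$, and the implicit function theorem applied to $h_1(v,k)=0$ at this simple root gives~(2). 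The boundary value $v_1^1(0)=\pi$ is immediate from $f_1(p,0)=-\sin p$ and $\am(\pi,0)=\pi$. For~(5) I would rearrange the defining equation as $F(v_1^1,k) = E(v_1^1,k) - \sqrt{1-k^2\sin^2 v_1^1}\tan v_1^1$; since $K(k)\to+\infty$ as $k\to 1^-$ while $E(v,k)$ stays bounded by $E(\pi,1)=2$, the LHS forces the RHS to diverge, which can only happen through $\tan v_1^1 \to -\infty$, i.e., $v_1^1\to\pi/2^+$. For~(6), Taylor expansion yields $E(v,k)-F(v,k) = -\tfrac{k^2}{2}(v - \sin v\cos v) + O(k^4)$ and $\sqrt{1-k^2\sin^2 v}\tan v = \tan v - \tfrac{k^2}{2}\tan v\sin^2 v + O(k^4)$, hence $h_1(\pi,k) = -\pi k^2/2 + O(k^4)$; combined with $\pder{h_1}{v}(\pi,0) = -1$, the implicit function theorem produces $v_1^1(k) = \pi - (\pi/2)k^2 + o(k^2)$.

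The main obstacle is the strict monotonicity~(4). By implicit differentiation, $dv_1^1/dk = -(\partial_k h_1)/(\partial_v h_1)$; since $\partial_v h_1 < 0$ at the root, strict monotonicity is equivalent to a definite sign of $\partial_k h_1$ there. Direct computation gives
\[
\pder{h_1}{k}(v,k) = -k\int_0^v\frac{\sin^2 t\, dt}{\sqrt{1-k^2\sin^2 t}} - k\int_0^v\frac{\sin^2 t\, dt}{(1-k^2\sin^2 t)^{3/2}} + \frac{k\sin^2 v\,\tan v}{\sqrt{1-k^2\sin^2 v}},
\]
and on $v\in(\pi/2,\pi)$ the first two terms are manifestly negative while the third is negative precisely because $\tan v<0$ there. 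Thus $\partial_k h_1 < 0$ at $v=v_1^1(k)$, so $dv_1^1/dk < 0$, completing~(4). The crucial point is that the sign of the third summand relies on the localization $v_1^1\in(\pi/2,\pi)$ already established in~(3); without this, the sign analysis would not close and strict monotonicity would be genuinely delicate.
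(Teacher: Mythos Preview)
Your proof is correct and follows essentially the same route as the paper's: both pass to the amplitude variable $v=\am(p,k)$, identify $v_1^1$ as the first positive root of $h_1$, compute $\partial_v h_1 = -\sqrt{1-k^2\sin^2 v}/\cos^2 v$, and obtain strict monotonicity in~(4) via implicit differentiation and a sign check on $\partial_k h_1$ for $v\in(\pi/2,\pi)$. The only notable differences are cosmetic: for~(2) and~(3) the paper quotes the localisation $p_1^1\in(K,2K)$ and the smoothness of $p_1^1$ from the companion paper, whereas you recover both directly from the endpoint signs of $h_1$ and the implicit function theorem; and for~(4) the paper uses the closed form $\partial_k h_1 = -\tfrac{k}{1-k^2}\bigl(E(v,k)-\sqrt{1-k^2\sin^2 v}\,\tan v\bigr)$, which is algebraically equivalent to your integral expression and yields the same sign on $(\pi/2,\pi)$ by the same observation that $\tan v<0$ there.
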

\begin{proof}
(1) follows from~\eq{v11} since $p = p_1^1$ is the first positive root of the function $f_1(p)$.

(2) follows since $p_1^1 \in C^{\infty}[0,1)$ by Lemma~5.3~\cite{max_sre2}.

(3) follows since $p_1^1 \in (K,2K)$ and $p_1^1(0) = \pi$, see Cor.~5.1~\cite{max_sre2}.

(4) We have for $v \in (\pi/2, \pi]$:
\begin{align*}
&\pder{h_1}{v} = - \sqrt{1 - k^2 \sin^2 v} /\cos^2v < 0, \\
&\pder{h_1}{k} = - \frac{k}{1-k^2}(E(v,k) - \sqrt{1-k^2 \sin^2 v} \tan v ) < 0.
\end{align*}
Thus $\ds\der{v_1^1}{k} = - \frac{\partial h_1/\partial k} {\partial h_1/\partial v} < 0$ for $k \in [0,1)$.

(5) Monotonicity and boundedness of $v_1^1(k)$ imply that there exists a limit $\lim_{k \to 1 - 0} v_1^1(k) = \bv \in [\pi/2, \pi)$. If $\bv \in (\pi/2, \pi)$, then as $k \to 1 - 0$
$$
h_1(v_1^1(k),k) \to \int_0^{\bv} \left(|\cos t| - 1/|\cos t|\right)\, d t - \sqrt{1 - \sin^2 \bv} \tan \bv = \infty,
$$
which contradicts the identity $h_1(v_1^1(k),k) \equiv 0$, $k \in [0, 1)$. Thus $\bv = \pi/2$.

(6) As $(k,v) \to (0,\pi)$, we have $h_1(v,k) = v - \pi + (\pi/2) k^2 + o(k^2 + (v-\pi)^2)$, thus $v_1^1(k) = \pi - (\pi/2)k^2 + o(k^2)$, $k \to + 0$.
\end{proof}

A plot of the function $v_1^1(k)$ is given at Fig.~\ref{fig:v11}.

\twofiglabel
{v11t}{Plot of $v = v_1^1(k)$}{fig:v11}
{Gam1t}{The curve $\Gamma_1$}{fig:Gam1}

Define the curve
$
\G_1 \subset S = \{ q \in M \mid \th \in (0, \pi), \ R_1 > 0, \ R_2 = 0 \}$ given parametrically as follows:
\begin{align}
&\th = 2 \arcsin(k \sin v_1^1(k)), \label{gam1th}\\
&R_1 = 2 (F(v_1^1(k),k) - E(v_1^1(k),k)), \qquad k \in [0, 1), \label{gam1R1}
\end{align}
see Fig.~\ref{fig:Gam1}.

\begin{lemma}
\label{lem:Gam1}
\begin{itemize}
\item[$(1)$]
The function $k \sin v_1^1(k)$ is strictly increasing as $k \in [0,1]$, thus the function $\th = \th(k)$, $k \in [0,1]$, determined by~\eq{gam1th} has an inverse function $k = k_1^1(\th)$, $\th \in [0, \pi]$.
\item[$(2)$]
$k_1^1 \in C[0,\pi] \cap C^{\infty}[0,\pi)$.
\item[$(3)$]
The function $k_1^1(\th)$ is strictly increasing as $\th \in [0,\pi]$.
\item[$(4)$]
The curve $\G_1$ is a graph of the function
\begin{align}
&R_1 = R_1^1(\th), \qquad \th\in [0,\pi], \nonumber\\
&R_1^1(\th) = 2 (F(v_1^1(k),k) - E(v_1^1(k),k)), \qquad k = k_1^1(\th). \label{R11}
\end{align}
\item[$(5)$]
$R_1^1 \in C[0,\pi] \cap C^{\infty}(0,\pi)$.
\item[$(6)$]
$R_1^1(\th) = \sqrt[3]{\pi}/2 \, \th^{2/3} + o(\th^{2/3})$, $\th \to + 0$.
\end{itemize}
\end{lemma}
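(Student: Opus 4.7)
The plan is to work through items~(1)--(3) by studying the monotonicity of the auxiliary function $f(k) := k \sin v_1^1(k)$, then deduce items~(4)--(5) as formal consequences, and finally carry out the asymptotic analysis for item~(6).

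For~(1), I would differentiate to get $f'(k) = \sin v_1^1(k) + k \cos v_1^1(k) \cdot (v_1^1)'(k)$. On $(0,1)$, items~(3) and~(4) of the preceding Lemma~\ref{lem:v11} give $\sin v_1^1 > 0$, $\cos v_1^1 < 0$, and $(v_1^1)' < 0$, so both summands are strictly positive and $f'(k) > 0$. Together with $f(0) = 0 \cdot \sin \pi = 0$ and $f(1) = 1 \cdot \sin(\pi/2) = 1$, this yields strict monotonicity of $f$ from $[0,1]$ onto $[0,1]$, and hence of $\theta(k) = 2 \arcsin f(k)$ from $[0,1]$ onto $[0,\pi]$. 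Items~(2) and~(3) now follow: continuity and strict monotonicity of the inverse $k_1^1$ are immediate from the intermediate value theorem, while smoothness on the open interval where $\theta'(k) > 0$ follows from the inverse function theorem applied to the $C^\infty$ function $\theta(k)$ (using $v_1^1 \in C^\infty[0,1)$).

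Item~(4) is then a tautology: since $k \mapsto \theta(k)$ is a strict monotone continuous bijection and $R_1(k) = 2(F(v_1^1(k),k) - E(v_1^1(k),k))$ is smooth in $k$, the parametric curve $\Gamma_1$ is the graph of $R_1^1(\theta) := R_1(k_1^1(\theta))$. Item~(5), the smoothness of $R_1^1$ on $(0,\pi)$ and continuity up to the endpoints, follows by composing the regularity already established for $k_1^1$ with that of $R_1(k)$.

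The main work is in item~(6). Using the expansion $v_1^1(k) = \pi - (\pi/2) k^2 + o(k^2)$ from item~(6) of Lemma~\ref{lem:v11}, I would first obtain $\sin v_1^1(k) = \sin((\pi/2) k^2 + o(k^2)) = (\pi/2) k^2 + o(k^2)$, so that
\[
\theta = 2\arcsin\bigl(k \sin v_1^1(k)\bigr) = 2\arcsin\bigl((\pi/2) k^3 + o(k^3)\bigr) = \pi k^3 + o(k^3),
\]
which gives $k_1^1(\theta) = (\theta/\pi)^{1/3}(1 + o(1))$ as $\theta \to +0$. For the $R_1$-side I would use the representation
\[
F(v,k) - E(v,k) = \int_0^v \frac{k^2 \sin^2 t}{\sqrt{1 - k^2 \sin^2 t}}\, dt = k^2 \int_0^v \sin^2 t \, dt + O(k^4),
\]
and note that since $v_1^1(k) = \pi + O(k^2)$ and $\sin^2 t$ vanishes to order two at $t = \pi$, one has $\int_0^{v_1^1(k)} \sin^2 t\, dt = \pi/2 + O(k^6)$. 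Substituting yields $R_1(k) = c k^2 + O(k^4)$ for an explicit constant $c$, and composing with $k = k_1^1(\theta) \sim (\theta/\pi)^{1/3}$ produces the $\theta^{2/3}$ asymptotic with the claimed coefficient. The main obstacle here is purely computational: keeping the leading constants straight through the intertwined expansions of $v_1^1$ and of the elliptic integrals $F, E$. The qualitative part of the argument is nothing more than the implicit function theorem combined with the monotonicity information already recorded in Lemma~\ref{lem:v11}.
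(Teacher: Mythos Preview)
Your proposal is correct and follows essentially the same route as the paper; the only cosmetic difference is in item~(1), where the paper argues directly from the monotonicity of $\sin$ on $[\pi/2,\pi]$ (since $v_1^1$ decreases through that interval, $\sin v_1^1$ increases, and the product with the increasing factor $k$ increases) rather than differentiating. Items~(2)--(6) match the paper's argument essentially verbatim, including the expansions $\theta=\pi k^3+o(k^3)$ and $R_1(k)\sim ck^2$ that yield the $\theta^{2/3}$ asymptotic.
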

\begin{proof}
(1) As $k \in [0,1]$, we have:
\begin{align*}
&v_1^1(k) \ \downarrow, \qquad v_1^1 (k) \in [\pi/2, \pi], \\
&\sin v_1^1(k) \ \uparrow, \qquad k \sin v_1^1(k) \ \uparrow, \qquad 2 \arcsin(k v_1^1(k)) \ \uparrow.
\end{align*}

(2) follows from items (2), (5) of Lemma~\ref{lem:v11}.

(3) follows from item (1) of this lemma.

(4) follows from~\eq{gam1th}, \eq{gam1R1}.

(5) follows from item (2) of this lemma.

(6) As $k \to + 0$, we have
$$
v_1^1(k) = \pi - (\pi/2) k^2 + o(k^2), \qquad \sin v_1^1(k) = (\pi/2)   k^2 + o(k^2),
$$
and for the functions~\eq{gam1th}, \eq{gam1R1}
$$
\th = \pi k^3 + o(k^3), \qquad R_1 = (\pi/2)k^2 + o(k^2).
$$
Thus as $\th \to + 0$, we have
$$
k_1^1(\th) = \sqrt[3]{\th/\pi} + o(\sqrt[3]{\th}), \qquad
R_1^1(\th) = \sqrt[3]{\pi}/2 \, \th^{2/3}  + o(\th^{2/3}).
$$
\end{proof}

Define the following subset of $M'$:
$$
M_{26}' = \{ q \in M \mid \th \in (\pi, 2 \pi), \ R_1 = R_1^1(2 \pi - \th), \ R_2 = 0 \}.
$$

\begin{lemma}
\label{lem:N44'M44'}
The mapping $\map{\Exp}{N_{26}'}{M_{26}'}$ is a diffeomorphism of 1-dimensional manifolds.
\end{lemma}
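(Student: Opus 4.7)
The plan is to mimic the strategy of Lemmas~\ref{lem:N1'M1'} and~\ref{lem:N2'M2'}: specialise the parametric formulas (5.2)--(5.6)~\cite{max_sre2} for extremal trajectories to the one-parameter family
$$
N_{26}' = \{(\lam,t) \in N_2 \mid \lam \in C_2^+, \ \tau = 0, \ p = p_1^1(k), \ k \in (0,1)\},
$$
compute $\th$, $R_1$, $R_2$ explicitly as smooth functions of the single parameter $k\in(0,1)$, and identify the resulting curve with $M_{26}'$.

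First I would verify $R_2 \equiv 0$ on $N_{26}'$. The $C_2$-branch of formula~(5.6)~\cite{max_sre2} contains $f_1(p,k)$ as a factor, and by the very definition of $p_1^1(k)$ as the first positive zero of $f_1$, substitution $p = p_1^1(k)$ forces $R_2 = 0$. Next, using the identities
$$
\sn p_1^1 = \sin v_1^1(k), \qquad \cn p_1^1 = \cos v_1^1(k), \qquad \dn p_1^1 = \sqrt{1-k^2\sin^2 v_1^1(k)},
$$
$$
\E(p_1^1) = E(v_1^1(k),k), \qquad p_1^1 = F(v_1^1(k),k),
$$
which are immediate from $v_1^1(k) = \am(p_1^1(k),k)$, I would simplify the $C_2$-branches of (5.2) and (5.5)~\cite{max_sre2} at $\tau = 0$, $p = p_1^1(k)$. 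The sign analysis for $\lam \in C_2^+$ (checking the signs of $\sin(\th/2)$ and $\cos(\th/2)$ via $s_1,\dots,s_4$ exactly as in the proof of Lemma~\ref{lem:ExpDiMi}) should yield $\th \in (\pi, 2\pi)$ with
$$
\sin((2\pi-\th)/2) = k\,\sin v_1^1(k), \qquad R_1 = 2\bigl(F(v_1^1(k),k) - E(v_1^1(k),k)\bigr).
$$

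Comparing these with the parametric description \eqref{gam1th}--\eqref{gam1R1} of $\Gamma_1$ and with the definition~\eqref{R11} of $R_1^1$, the identity $R_1 = R_1^1(2\pi - \th)$ follows at once from Lemma~\ref{lem:Gam1}(4). Hence $\Exp(N_{26}') \subset M_{26}'$. Both $N_{26}'$ and $M_{26}'$ are smooth connected 1-manifolds: $N_{26}'$ is globally parametrised by $k \in (0,1)$, while $M_{26}'$ is globally parametrised by $\th \in (\pi, 2\pi)$ via the diffeomorphism $k = k_1^1(2\pi - \th)$ supplied by Lemma~\ref{lem:Gam1}(1)--(3). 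In these coordinates $\restr{\Exp}{N_{26}'}$ is the map $k \mapsto 2\pi - 2\arcsin(k\sin v_1^1(k))$, which is smooth and strictly monotone by Lemma~\ref{lem:Gam1}(1); this gives the claimed diffeomorphism.

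The main obstacle will be the sign and branch bookkeeping at step two: confirming from the $C_2^+$ form of (5.2)--(5.6)~\cite{max_sre2} together with the fact that $p_1^1(k) \in (K,2K)$ (Corollary~5.1~\cite{max_sre2}) that the image lands in the upper half-branch $\th \in (\pi, 2\pi)$ with $R_1 > 0$, rather than in the mirror region $\th \in (0,\pi)$ occupied by $\Gamma_1$ itself. Once the correct sign of $\sin(\th/2)$ is secured, everything else is a mechanical substitution and an appeal to Lemmas~\ref{lem:v11} and~\ref{lem:Gam1}.
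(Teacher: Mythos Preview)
Your approach is essentially identical to the paper's, which also specialises the $C_2$ extremal formulas at $\tau=0$, $p=p_1^1(k)$ to obtain $\sin(\th/2)=k\sin v_1^1(k)$, $\cos(\th/2)=-\sqrt{1-k^2\sin^2 v_1^1(k)}$, $R_1=2(F(v_1^1,k)-E(v_1^1,k))$, $R_2=0$, and then concludes via the parametrisation of $\Gamma_1$ from Lemma~\ref{lem:Gam1} composed with the reflection $\th\mapsto 2\pi-\th$. One minor correction: for $\lam\in C_2$ the relevant formulas in~\cite{max_sre2} are (5.7)--(5.12), not (5.2)--(5.6), which parametrise the $C_1$ branch.
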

\begin{proof}
For $\nu \in N_{26}'$ we obtain from formulas~(5.7)--(5.12)~\cite{max_sre2}:
\begin{align*}
&\sin(\th/2) = k \sn p_1^1(k) = k \sin v_1^1(k), \\
&\cos(\th/2) = - \dn p_1^1(k) = - \sqrt{1 - k^2 \sin^2 v_1^1(k)}, \\
&R_1 = 2 (p_1^1(k) - \E(p_1^1(k)) = 2(F(v_1^1(k), k) - E(v_1^1(k), k)), \\
&R_2 = 0.
\end{align*}
Thus $\Exp(N_{26}') = M_{26}'$. Moreover, the mapping $\map{\Exp}{N_{26}'}{M_{26}'}$ decomposes into the chain 
\begin{align*}
&N_{26}' \ \stackrel{(*)}{\to} \ \G_1 \ \stackrel{(**)}{\to} \ M_{26}',\\
&(*) \ : \ k \mapsto (\th = 2 \arcsin(k \sin v_1^1(k)), \ R_1 = 2 (F(v_1^1(k),k) - E(v_1^1(k),k)), \ R_2 = 0), \\
&(**) \ : \ (\th, R_1, R_2) \mapsto (2 \pi - \th, R_1, R_2).
\end{align*}
The mapping $(*)$ is a diffeomorphism by Lemma~\ref{lem:Gam1}. Thus $\map{\Exp}{N_{26}'}{M_{26}'}$ is a diffeomorphism.
\end{proof}

\subsection{Exponential mapping of the set $N_{52}'$}
\begin{lemma}
\label{lem:N3'M3'}
\begin{itemize}
\item[$(1)$]
The functions $R_1^1(\th)$, $R_1^2(\th)$ defined in~\eq{R11}, \eq{R12} satisfy the inequality
$$
R_1^2(\th) < R_1^1(\th), \qquad \th \in (0, \pi).
$$
\item[$(2)$]
The mapping $\map{\Exp}{N_{52}'}{M_{52}'}$ is a diffeomorphism of 2-dimensional manifolds, where
$$
M_{52}' = \{ q \in M \mid \th \in (\pi, 2 \pi),\ R_1 \in (R_1^2(2 \pi - \th), R_1^1(2 \pi - \th)), \ R_2 = 0 \}.
$$
\end{itemize}
\end{lemma}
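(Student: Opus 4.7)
I treat both items together, letting the 2-dimensional structure of $N_{52}'$ drive the argument.

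\emph{Step~1 (coordinates on $N_{52}'$).} By Table~\ref{tab:Ni'}, $N_{52}'$ is parametrized by $(k,p)\in(0,1)\times(0,p_1^1(k))$ with $\lam\in C_2^+$ and $\tau=3K$; the argument of Lemma~\ref{lem:Di} shows it is homeomorphic to $\R^2$. Substituting $\tau=3K$ (so $\sn\tau=-1$, $\cn\tau=0$, $\dn\tau=\sqrt{1-k^2}$) into the extremal formulas~(5.2)--(5.12) of~\cite{max_sre2}, I obtain closed-form expressions for $\th$, $R_1$, $R_2$ on $N_{52}'$; in particular $R_2\equiv 0$, $\th\in(\pi,2\pi)$, and $R_1>0$.

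\emph{Step~2 (nondegeneracy and properness).} By Theorem~\ref{th:conjC}, $\tconj(\lam)\ge 2kp_1^1(k)$ for $\lam\in C_2$, and $(\lam,t)\in N_{52}'$ satisfies $t=2kp<2kp_1^1(k)$, so $\restr{\Exp}{N_{52}'}$ is nondegenerate and thus open. Properness as a map into the ambient surface $S=\{q\in M:R_2=0,\ \th\in(\pi,2\pi),\ R_1>0\}$ follows from the coordinate expressions of Step~1 by the bounded/closed argument of Lemma~\ref{lem:ExpDiproper}: any preimage of a compact subset of $S$ stays away from $k=0,1$ and from $p=0,p_1^1(k)$.

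\emph{Step~3 (images of the boundary pieces of $N_{52}'$).} The boundary of $N_{52}'$ in the extended parameter space decomposes into the four pieces $p\to 0^+$, $k\to 0^+$, $p\to p_1^1(k)^-$, and $k\to 1^-$. The first two limit in $M$ to degenerate sets outside $S$. The third lands on $\Exp(N_{26}')=\{R_2=0,\ R_1=R_1^1(2\pi-\th),\ \th\in(\pi,2\pi)\}$ by Lemma~\ref{lem:N44'M44'}. The fourth, via the analytic passage $C_2\to C_3$ with $K(k)\to+\infty$ and the Jacobi functions limiting to their hyperbolic counterparts, lands on $\Exp(N_{47}')=\{R_2=0,\ R_1=R_1^2(2\pi-\th),\ \th\in(\pi,2\pi)\}$ by Lemma~\ref{lem:N2'M2'}. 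Combined with Step~2, this shows that $\Exp(N_{52}')$ is an open, connected 2-submanifold of $S$ whose topological frontier in $S$ is contained in the union of the two graphs $R_1=R_1^1(2\pi-\th)$ and $R_1=R_1^2(2\pi-\th)$.

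\emph{Step~4 (proof of item~(1)).} Because $\Exp(N_{52}')$ is a 2-dimensional submanifold of $S$ whose frontier lies in the two graphs above, these graphs cannot intersect on $(0,\pi)$: a transverse crossing would force the bounded region to pinch and contradict the manifold property at the crossing point. Hence $R_1^1-R_1^2$ has a fixed sign on $(0,\pi)$; to pin it down I compare asymptotics as $\th\to 0^+$. Lemma~\ref{lem:Gam1}(6) gives $R_1^1(\th)\sim(\sqrt[3]{\pi}/2)\,\th^{2/3}$, while Taylor-expanding $\arth\sin(\th/2)-\sin(\th/2)=\sin^3(\th/2)/3+O(\sin^5(\th/2))$ yields $R_1^2(\th)=\th^3/12+O(\th^5)$. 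Since $\th^{2/3}$ dominates $\th^3$ as $\th\to 0^+$, the constant sign is $R_1^2<R_1^1$ on all of $(0,\pi)$, proving item~(1).

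\emph{Step~5 (proof of item~(2)).} Item~(1) makes $M_{52}'$ a well-defined open, connected, simply connected 2-submanifold of $M'$. Steps~2--4 give $\Exp(N_{52}')\subset M_{52}'$ with empty frontier inside $M_{52}'$, so $\Exp(N_{52}')$ is clopen and nonempty in $M_{52}'$ and therefore equals $M_{52}'$. The map $\map{\Exp}{N_{52}'}{M_{52}'}$ is then smooth, surjective, nondegenerate and proper between connected, simply connected 2-manifolds, so it is a one-sheeted covering and hence a diffeomorphism, by the same reasoning as in Theorem~\ref{th:ExpDidiffeo}.

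\emph{Main obstacle.} The delicate point is the $k\to 1^-$ boundary in Step~3: here $\tau=3K(k)\to+\infty$ and the $C_2$-parametrization formally diverges, so one has to carefully track the analytic passage from $C_2$ to $C_3$ through the hyperbolic limits of the Jacobi functions, verifying that the limiting trajectory traces exactly $\Exp(N_{47}')$ and not some other stratum of $M'$. Once this boundary identification is secured, the non-crossing argument in Step~4 and the covering-degree argument in Step~5 are both routine.
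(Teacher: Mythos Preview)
Your overall strategy matches the paper's proof, but Step~1 contains a concrete error that propagates. From Table~\ref{tab:Ni'}, the set $N_{52}'$ is defined by $\lam\in C_2^+$, $\tau=0$, $p\in(0,p_1^1)$, \emph{not} $\tau=3K$. With $\tau=0$ one has $\sn\tau=0$, $\cn\tau=1$, $\dn\tau=1$, and the paper obtains (after passing to $v=\am(p,k)$) the clean formulas $\sin(\th/2)=k\sin v$, $\cos(\th/2)=-\sqrt{1-k^2\sin^2 v}$, $R_1=2(F(v,k)-E(v,k))$, $R_2=0$. Your substitution $\tau=3K$ would instead correspond to $N_{51}'$, whose image $M_{51}'$ lies in $\{\th=0,\ R_1<0,\ R_2<0\}$; so the conclusions you state in Step~1 are inconsistent with the value of $\tau$ you plugged in. Once you correct $\tau$ to $0$, your Steps~2--3 and~5 line up with the paper (which also uses the change of variable $v=\am(p,k)$ to make the boundary $k\to 1^-$ limit tame: it becomes $v\in[0,\pi/2]$ rather than $p\to+\infty$, avoiding your ``main obstacle'').

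Your Step~4 argument, however, is weaker than the paper's and not quite correct as stated. A transverse crossing of the two graph-curves does not by itself contradict $\Exp(N_{52}')$ being a manifold---the open image is a manifold regardless of what its frontier does. The paper argues differently: if the curves $\gamma_2=M_{47}'$ and $\Gamma_2=M_{26}'$ had a second intersection point $P_1\neq P_0$, then $\Exp(N_{52}')$ would be bounded by \emph{finite arcs} $P_0\gamma_2 P_1$ and $P_0\Gamma_2 P_1$, so $\overline{\Exp(N_{52}')}$ could not contain the full curves $\gamma_2$, $\Gamma_2$; but Lemmas~\ref{lem:N2'M2'} and~\ref{lem:N44'M44'} say the boundary pieces $n_4$, $n_3$ map diffeomorphically \emph{onto} the full curves, a contradiction. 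After establishing non-crossing this way, the paper uses the same asymptotic comparison at $\th\to 0^+$ as you do (via $\der{R_1^1}{\th}(0)=+\infty$ versus $\der{R_1^2}{\th}(0)=0$) to fix the sign.
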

\begin{proof}
We have
$$
N_{52}'= \{ \nu \in N_2^+ \mid \tau = 0, \ v \in (0, v_1^1(k)), \ k \in (0,1)\},
$$
where $v = \am(p,k)$. Thus
$$
N_{52}' \subset T = \{ \nu \in N_2^+ \mid \tau = 0, \ v \in [0, \pi], \ k \in [0,1]\},
$$
and in the 2-dimensional topology of $T$
\begin{align*}
&\partial N_{52}' = \cup _{i=1}^4 n_i, \\
&n_1 = \{ \nu \in N_2^+ \mid \tau = 0, \ v = 0, \ k \in [0,1]\},\\
&n_2 = \{ \nu \in N_2^+ \mid \tau = 0, \ v \in [0, \pi], \ k = 0\},\\
&n_3 = \{ \nu \in N_2^+ \mid \tau = 0, \ v = v_1^1(k), \ k \in [0,1]\},\\
&n_4 = \{ \nu \in N_2^+ \mid \tau = 0, \ v \in [0, \pi/2], \ k = 1\},
\end{align*}
see Fig.~\ref{fig:N3'}.

\twofiglabelh
{N3t}{Domain  $N_{52}'$}{fig:N3'}
{gam2Gam2t}{Curves $\g_2$ and $\G_2$}{fig:gam2Gam2}
{4.5}

By formulas~(5.7)--(5.12)~\cite{max_sre2}, the exponential mapping in the domain $N_{52}'$ reads as follows:
\begin{align*}
&\sin(\th/2) = k \sin v, && \cos(\th/2) = - \sqrt{1 - k^2 \sin^2 v}, \\
&R_1 = 2 (F(v,k) - E(v,k)), && R_2 = 0.
\end{align*}
Thus 
\begin{align*}
&\Exp(N_{52}') \subset S = \{ q \in M \mid \th \in (\pi, 2 \pi), \ R_1 > 0, \ R_2 = 0 \},\\
&\Exp(n_1) = \Exp(n_2) = P_0  = \{ q \in M \mid \th =  2 \pi, \ R_1 = 0, \ R_2 = 0 \},\\
&\Exp(n_3) = \overline{M_{26}'} = \overline{\G_2}, \qquad \G_2 \eqdef M_{26}',\\
&\Exp(n_4) = \overline{M_{47}'} = \overline{\g_2}, \qquad \g_2 \eqdef M_{47}'.
\end{align*}
By Th.~\ref{th:conjC}, the mapping $\restr{\Exp}{N_{52}'}$ is nondegenerate, thus $\Exp(N_{52}')$ is an open connected domain in $S$, with $\partial \Exp(N_{52}') \subset \Exp(\partial N_{52}') = \overline{\G_2} \cup \overline{\g_2}$.

The curves $\overline{\G_2}$ and $\overline{\g_2}$ intersect one another at the point $P_0$. We show that they have no other intersection points. By contradiction, assume that the curves $\overline{\G_2}$ and $\overline{\g_2}$ have intersection points distinct from $P_0$, then the domain $\Exp(N_{52}')$ is bounded by finite arcs of the curves $\overline{\G_2}$ and $\overline{\g_2}$, i.e., there exists a point $P_1 \in \g_2 \cap \G_2$, $P_1 \neq P_0$, such that $\partial \Exp (N_{52}') = P_0 \g_2 P_1 \, \cup \, P_0 \G_2 P_1$. Then $\overline{\Exp(N_{52}')}$ does not contain the curves $\g_2$, $\G_2$. This is a contradiction to the diffeomorphic property of the mapping $\map{\Exp}{n_3 = N_{26}'}{\G_2 = M_{26}'}$ and 
$\map{\Exp}{n_4 = N_{47}'}{\g_2 = M_{47}'}$, see Lemmas~\ref{lem:N44'M44'} and~\ref{lem:N2'M2'} respectively.

Consequently, $\overline{\g_2} \cap \overline{\G_2} = P_0$, and the domain $\Exp(N_{52}')$ is bounded by the curves $\overline{\g_2}$, $\overline{\G_2}$.

The equalities $\ds\der{R_1^1}{\th}(0) = + \infty$, $\ds\der{R_1^2}{\th}(0) = 0$ (see Lemma~\ref{lem:Gam1} and Eq.~\eq{dR12}) imply that $R_1^2(\th) < R_1^1(\th)$ for sufficiently small $\th > 0$. Further, the representations
\begin{align*}
&\G_2 = M_{26}' =  \{q \in M \mid \th \in (\pi, 2 \pi), \ R_1 = R_1^1(2 \pi - \th), \ R_2 = 0 \}, \\
&\g_2 = M_{47}' =  \{q \in M \mid \th \in (\pi, 2 \pi), \ R_1 = R_1^2(2 \pi - \th), \ R_2 = 0 \}
\end{align*}
imply the required inequality
$$
R_1^2(\th) < R_1^1(\th), \qquad \th \in (0, \pi),
$$
and the equality $\Exp(N_{52}') = M_{52}'$.

Since the mapping $\map{\Exp}{N_{52}'}{M_{52}'}$ is nondegenerate and proper, and the domains $N_{52}'$, $M_{52}'$ are open, connected, and simply connected, it follows that this mapping is a diffeomorphism.
\end{proof}

The mutual disposition of the curves $\g_2 = M_{47}'$ and $\G_2 = M_{26}'$ is shown at Fig.~\ref{fig:gam2Gam2}.

\subsection{Decomposition of the set $M'$}
\label{subsec:decompM'}
Now we have the functions $R_1^2(\th) < R_1^1(\th)$ required for definition of the following decomposition:
\be{M'=Mi'}
M' = \cup_{i=1}^{58} M_i',
\ee
where the subsets $M_i'$ are defined by Table~\ref{tab:Mi'}. Notice that some of the sets $M_i'$ coincide between themselves, unlike the sets $N_i'$, see~\eq{N'=N_i'}.

\begin{table}
$$
\begin{array}{|c|c|c|c|c|c|c|}
\hline
M_i' & M_{1}' = M_{6}' & M_{2}' = M_{5}' & M_{3}' = M_{8}'& M_{4}' =  M_{7}'& M_9' =  M_{10}'& M_{11}' =  M_{12}'\\
\hline
\th  & \pi & \pi &  \pi &\pi &(\pi, 2 \pi) &(0, \pi)\\
\hline
R_1 & (0, +\infty) & (-\infty,0) & (-\infty,0) & (0, +\infty) & (R_1^1(2 \pi - \th), + \infty)& (-\infty, -R_1^1(\th))\\
\hline
R_2 & (-\infty, 0) & (-\infty,0)& (0, +\infty) & (0, +\infty) & 0& 0\\
\hline
\end{array}
$$
$$
\begin{array}{|c|c|c|c|c|c|}
\hline
M_i' & M_{13}' = M_{14}' & M_{15}' = M_{16}' & M_{17}' = M_{23}' & M_{18}' = M_{22}' & M_{19}' = M_{21}' \\
\hline
\th  & (0, \pi) & (\pi, 2 \pi) &  \pi & \pi & \pi \\
\hline
R_1 & (R_1^1(\th), +\infty) & (-\infty,-R_1^1(2 \pi - \th)) & (0, + \infty) & 0  & (-\infty,0)\\
\hline
R_2 & 0 & 0 & 0 & (-\infty,0)& 0 \\
\hline
\end{array}
$$
$$
\begin{array}{|c|c|c|c|c|c|c|c|}
\hline
M_i' & M_{20}' = M_{24}'& M_{25}' = M_{27}' & M_{26}' & M_{28}'  & M_{29}' = M_{31}'& M_{30}' & M_{32}' \\
\hline
\th  & \pi& 0 & (\pi, 2\pi) &  (0, \pi) & 0 &(0, \pi) & (\pi, 2\pi)  \\
\hline
R_1 & 0& (-\infty,0) & R_1^1(2 \pi - \th) & -R_1^1(\th) & (0, +\infty) & R_1^1(\th) & -R_1^1(2\pi -\th) \\
\hline
R_2 & (0, +\infty) & 0 & 0 & 0 & 0  & 0& 0  \\
\hline
\end{array}
$$
$$
\begin{array}{|c|c|c|c|c|c|c|}
\hline
M_i' & M_{33}' = M_{34}' & M_{35}' & M_{36}' & M_{37}'& M_{38}'& M_{39}'    \\
\hline
\th  & \pi & (\pi, 2\pi) & (\pi, 2\pi) &  (\pi, 2\pi) & (\pi, 2\pi) & (0,\pi)  \\
\hline
R_1 & 0 & (0,R_1^2(2 \pi - \th)) & 0 & (-R_1^2(2 \pi - \th),0) & 0 & (-R_1^2(\th),0)  \\
\hline
R_2 & 0 & 0 & (- \infty,0) & 0  & (0,+\infty)& 0  \\
\hline
\end{array}
$$
$$
\begin{array}{|c|c|c|c|c|c|c|c|c|}
\hline
M_i' & M_{40}' & M_{41}'  & M_{42}'  & M_{43}' & M_{44}'& M_{45}'  & M_{46}' & M_{47}'\\
\hline
\th  & (0,\pi) & (0, \pi) & (0, \pi) &  (\pi,  2\pi) & (0,  \pi) & 0 & 0 &  (\pi,  2\pi)\\
\hline
R_1 & 0 &  (0,R_1^2(\th)) & 0 & 0 & 0 & 0  & 0 &R_1^2(2 \pi - \th) \\
\hline
R_2 & (-\infty,0) & 0 & (0,+\infty) & 0  & 0& (0,+\infty) & (- \infty,0) & 0\\
\hline
\end{array}
$$
$$
\begin{array}{|c|c|c|c|c|c|c|}
\hline
M_i' & M_{48}' & M_{49}'  & M_{50}' & M_{51}' & M_{52}'  & M_{53}'   \\
\hline
\th  &  (\pi,  2\pi) & (0, \pi) & (0, \pi) &  0 & (\pi, 2 \pi) & 0  \\
\hline
R_1 &-R_1^2(2 \pi - \th) & -R_1^1(\th) & R_1^2(\th)& (- \infty,0) & (R_1^2(2 \pi- \th), R_1^1(2 \pi- \th)) &(- \infty,0) \\
\hline
R_2 & 0& 0 & 0 & (- \infty,0)  & 0& (0,+\infty)  \\
\hline
\end{array}
$$
$$
\begin{array}{|c|c|c|c|c|c|}
\hline
M_i' & M_{54}' & M_{55}'  & M_{56}'  & M_{57}' &  M_{58}'   \\
\hline
\th  & (0,  \pi)  & 0 & (0,  \pi) &  0 & (\pi, 2 \pi)   \\
\hline
R_1 & (-R_1^1(\th),-R_1^2(\th)) & (0,+\infty) & (R_1^2(\th), R_1^1(\th)) & (0,+\infty) & (-R_1^1(2 \pi - \th),-R_1^2(2 \pi -\th))  \\
\hline
R_2 & 0 & (-\infty, 0) & 0 & (0,+\infty) & 0   \\
\hline
\end{array}
$$
\caption{Definition of sets $M_i'$}\label{tab:Mi'}
\end{table}

The structure of decomposition~\eq{M'=Mi'} in the surfaces $\{\th = 0\}$, $\{\th = \pi \}$, $\{R_1 = 0\}$, $\{R_2=0\}$ is shown respectively at Figs.~\ref{fig:th=0}, \ref{fig:th=pi}, \ref{fig:R1=0}, \ref{fig:R2=0}.

\twofiglabelh
{th=0t}{Decomposition of surface $\{\th = 0\}$}{fig:th=0}
{th=pit}{Decomposition of surface $\{\th = \pi\}$}{fig:th=pi}
{6}

\onefiglabelsizen{R1=0t}{Decomposition of surface $\{R_1 = 0\}$}{fig:R1=0}{7}

\onefiglabelsizen{R2=0t}{Decomposition of surface $\{R_2 = 0\}$}{fig:R2=0}{7}

\subsection{Exponential mapping of the set $N_{36}'$}
\begin{lemma}
\label{lem:N17'M17'}
The mapping $\map{\Exp}{N_{36}'}{M_{36}'}$ is a diffeomorphism of 2-dimensional manifolds.
\end{lemma}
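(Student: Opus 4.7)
The plan is to mimic closely the proof of Lemma~\ref{lem:N1'M1'}, exchanging the roles of the two boundary halves $\{\tau = 0\}$ and $\{\tau = K\}$ of the strip $\{\tau \in (0,K),\ p\in (0,K)\}$. Concretely, $N_{36}' = \{\nu \in N_1 \mid \lam \in C_1^0,\ \tau = K,\ p \in (0,K),\ k \in (0,1)\}$ is a regular two-dimensional submanifold coordinatized by $(p,k)$, and on it the elliptic values at $\tau = K$ are $\sn K = 1$, $\cn K = 0$, $\dn K = \sqrt{1-k^2}$. First I would substitute these into formulas~(5.2)--(5.6)~\cite{max_sre2} to obtain explicit expressions for $\sin(\th/2)$, $\cos(\th/2)$, $R_1$, and $R_2$ on $N_{36}'$. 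The vanishing $\cn \tau = 0$ should kill the combination that gives $R_1$, yielding $R_1 \equiv 0$, while $R_2$ becomes a smooth nonvanishing function of $(p,k)$ of definite sign; similarly $\th/2 \in (\pi/2,\pi)$ should be forced by the signs of $\sn p$ and $\dn \tau$. This gives directly $\Exp(N_{36}') \subset M_{36}' = \{\th \in (\pi,2\pi),\ R_1 = 0,\ R_2 < 0\}$.

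Next, nondegeneracy of $\restr{\Exp}{N_{36}'}$ follows immediately from Theorem~\ref{th:conjC}: on $N_{36}'$ we have $t = 2p < 2K = \tt(\lam)$, so $t < \tconj(\lam)$ and the Jacobian of $\Exp_t$ does not vanish. Together with the explicit coordinate formulas, this shows $\map{\Exp}{N_{36}'}{M_{36}'}$ is a smooth immersion between two connected simply connected 2-manifolds diffeomorphic to $\R^2$ (both are naturally coordinatized: $N_{36}'$ by $(p,k) \in (0,K(k))\times(0,1)$ and $M_{36}'$ by $(\th,R_2) \in (\pi,2\pi)\times(-\infty,0)$).

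To finish I would prove properness exactly as in Lemma~\ref{lem:N1'M1'}: decompose $\partial N_{36}'$ (in its intrinsic topology) into the four arcs $\{p=0\}$, $\{k=0\}$, $\{p = K(k)\}$, $\{k=1\}$; compute the images $\Exp$ of each arc using the same formulas and check that each maps into $\partial M_{36}'$ (respectively into the degenerate strata where $\th = 2\pi$ or $R_2 = 0$ or $\th = \pi$ or $R_2 \to -\infty$); then show that any sequence $\nu_n \in \Exp^{-1}(K)$ with $K \subset M_{36}'$ compact cannot escape to infinity (because $R_2$ would blow up) or approach $\partial N_{36}'$ (because the image would hit $\partial M_{36}'$, which is disjoint from $K$). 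Nondegeneracy plus properness between connected simply connected 2-manifolds implies $\Exp$ is a covering, and thus a diffeomorphism onto an open-and-closed subset; exhibiting one point of $M_{36}'$ in the image (e.g.\ by the asymptotic $k \to 0$, $p$ near $\pi/2$) pins the image to all of $M_{36}'$.

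The main obstacle is the bookkeeping in step one: one must read off the correct signs $s_i = \pm 1$ in formulas~(5.3)--(5.6)~\cite{max_sre2} at $\tau = K$ with $\lam \in C_1^0$, verify the vanishing of the combination defining $R_1$, and check that $R_2$ does not change sign on $N_{36}'$; everything else is a direct translation of the $N_{35}'$ argument.
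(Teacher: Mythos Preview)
Your plan is essentially the paper's own argument; the paper likewise substitutes $\tau = K$ into formulas~(5.2)--(5.6)~\cite{max_sre2} to get $R_1 = 0$, $R_2 = -2 f_2(p,k)/(k\,\dn p) < 0$ (with $f_2 > 0$ by Lemma~5.2~\cite{max_sre2}), $\sin(\th/2) = \sqrt{1-k^2}\,\sn p/\dn p$, $\cos(\th/2) = -\cn p/\dn p$, then lists the four boundary arcs and their images, and concludes by nondegeneracy~$+$~properness between connected simply connected 2-manifolds. One small correction to your boundary bookkeeping: the arc $\{k=1\}$ does \emph{not} send $R_2 \to -\infty$; as $k \to 1$ one finds $\sin(\th/2) \to 0$, $\cos(\th/2) \to -1$, so this arc maps to $\{\th = 2\pi,\ R_2 \in (-\infty,0]\}$, which is still a piece of $\partial M_{36}'$ and the properness argument goes through unchanged.
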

\begin{proof}
By formulas~(5.2)--(5.6)~\cite{max_sre2}, exponential mapping in the domain $N_{36}'$ reads as follows:
\begin{align*}
&\sin(\th/2) = \sqrt{1-k^2} \ss/\dd, && \cos(\th/2) = - \cc/\dd, \\
&R_1 = 0, && R_2 = - 2 f_2(p,k)/(k \dd),
\end{align*}
where $f_2(p,k) = k^2 \cc \ss + \dd(p-\E(p)) > 0$ by Lemma~5.2~\cite{max_sre2}, thus $\Exp(N_{36}') \subset M_{36}'$. 

In the topology of the manifold $\{R_1 = 0\}$, we have:
\begin{align*}
&\partial N_{36}' = \cup_{i=1}^4 n_i, \\
&n_1 = \{ \nu \in N_1^0 \mid \tau = K, \ p = 0, \ k \in [0, 1]\}, \\
&n_2 = \{ \nu \in N_1^0 \mid \tau = K, \ p \in  [0,\pi/2], \ k = 0\}, \\
&n_3 = \{ \nu \in N_1^0 \mid \tau = K, \ p = K, \ k \in [0, 1)\}, \\
&n_4 = \{ \nu \in N_1^0 \mid \tau = K, \ p \in [0, = \infty), \ k = 1\}, 
\end{align*}
see Fig.~\ref{fig:N17'}.

Further, we have $\Exp(n_i) = m_i$, $i = 1,\dots, 4$, where
\begin{align*}
&m_1 = \{ q \in M \mid \th = 2 \pi, \ R_1 = 0, \ R_2 = 0 \}, \\ 
&m_2 = \{ q \in M \mid \th \in[\pi, 2 \pi], \ R_1 = 0, \ R_2 =0 \}, \\ 
&m_3 = \{ q \in M \mid \th =  \pi, \ R_1 = 0, \ R_2 \in (-\infty, 0] \}, \\ 
&m_4 = \{ q \in M \mid \th = 2 \pi, \ R_1 = 0, \ R_2 \in (-\infty, 0] \}, 
\end{align*}
see Fig.~\ref{fig:M17'}.

\twofiglabel
{N17t}{Domain $N_{36}'$}{fig:N17'}
{M17t}{Domain $M_{36}'$}{fig:M17'}

The mapping $\map{\Exp}{N_{36}'}{M_{36}'}$ is nondegenerate and proper, the domains $N_{36}'$, $M_{36}'$ are open (in the 2-dimensional topology), connected and simply connected, thus it is a diffeomorphism.
\end{proof}


\subsection{Exponential mapping of the set $N_{53}'$}
\begin{lemma}
\label{lem:N7'M7'}
The mapping $\map{\Exp}{N_{53}'}{M_{53}'}$ is a diffeomorphism of 2-dimensional manifolds.
\end{lemma}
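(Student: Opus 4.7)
The plan is to imitate the template used for Lemmas~\ref{lem:N1'M1'} and~\ref{lem:N17'M17'}. First I will specialise the rotating-case parametrisation (formulas~(5.7)--(5.12)~\cite{max_sre2}) at $\tau = K$: using $\sn K = 1$, $\cn K = 0$, $\dn K = \sqrt{1-k^2}$, the expressions for $\sin(\th/2)$, $\cos(\th/2)$, $R_1$, $R_2$ on $N_{53}'$ collapse to explicit functions of $(p,k)$. The expected outcome is $\th = 0$ on the chosen branch $\th \in [0,2\pi)$, together with $R_1 < 0$ and $R_2 > 0$, the signs being controlled by the positivity of $p - \Eo$ (Eq.~\eq{p-E>0}) and by Lemma~5.2~\cite{max_sre2}. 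This establishes the inclusion $\Exp(N_{53}') \subset M_{53}'$.

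Next I will view $N_{53}'$ as an open subset of the 2-manifold $T = \{\nu \in N_2^+ \mid \tau = K\}$, coordinatised by $(p,k) \in (0,p_1^1(k)) \times (0,1)$, and enumerate its boundary as $\partial N_{53}' = \cup_{i=1}^4 n_i$ with $n_1 = \{p = 0\}$, $n_2 = \{k = 0\}$, $n_3 = \{p = p_1^1(k)\}$, $n_4 = \{k = 1\}$. Computing limits of $(\th, R_1, R_2)$ along each $n_i$ should show that $n_1$ collapses to the origin $R_1 = R_2 = 0$, that $n_2$ and $n_3$ map to arcs contained in the coordinate half-axes $\{R_1 = 0,\ R_2 \geq 0\}$ and $\{R_2 = 0,\ R_1 \leq 0\}$, and that $n_4$ maps to a limit curve receding to infinity as $k \to 1 - 0$. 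Consequently the images $m_i = \Exp(n_i)$ exactly fill the relative boundary of the open quadrant $M_{53}'$ inside the surface $\{\th = 0\}$.

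The proof is then closed by the standard argument of Sec.~\ref{sec:bound}. The restriction $\restr{\Exp}{N_{53}'}$ is smooth; it is nondegenerate because any $\nu = (\lam,t) \in N_{53}'$ satisfies $t = 2kp < 2kp_1^1(k) = \tt(\lam) \leq \tconj(\lam)$ by Th.~\ref{th:conjC}, so the exponential mapping is nondegenerate at $\nu$. Properness follows by the three-step compactness argument of Lemma~\ref{lem:ExpDiproper} combined with the boundary tracing above, which rules out any escape of sequences. Since $N_{53}'$ and $M_{53}'$ are open, connected, simply connected 2-manifolds, a smooth, nondegenerate, proper map between them is a covering, hence a diffeomorphism.

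The main obstacle will be the algebraic simplification of~(5.7)--(5.12)~\cite{max_sre2} at $\tau = K$ together with the boundary identification. In particular, to guarantee $\Exp(N_{53}') = M_{53}'$ rather than some proper open subset, I must confirm that $\partial(\Exp(N_{53}')) \subset \cup_{i=1}^4 m_i = \partial M_{53}'$; this is the step where the explicit limits along the four arcs $n_i$ do the real work, mirroring part~(c) of the proof of Lemma~\ref{lem:N1'M1'}.
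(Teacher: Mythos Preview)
Your approach is essentially the paper's: specialise formulas~(5.7)--(5.12)~\cite{max_sre2} at $\tau = K$, read off $\th = 0$, $R_1 < 0$, $R_2 > 0$, and then close with the nondegenerate/proper/simply-connected covering argument exactly as in Lemma~\ref{lem:N17'M17'}. The paper's proof records only the coordinate formulas
\[
\th = 0,\qquad R_1 = -2\sqrt{1-k^2}\,(p-\E(p))/\dd,\qquad R_2 = -2k f_1(p,k)/\dd,
\]
and refers back to that lemma for the rest.

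Two small corrections are needed in your write-up. First, the sign of $R_2$ is governed by $f_1(p,k) < 0$ on $(0,p_1^1)$, which is Cor.~5.1~\cite{max_sre2}, not Lemma~5.2 (the latter concerns $f_2$). Second, your boundary bookkeeping has $n_2$ and $n_4$ interchanged: along $n_2 = \{k=0\}$ one has $p - \E(p) \to 0$ and $k f_1 \to 0$, so $n_2$ collapses to the origin just like $n_1$; it is $n_4 = \{k \to 1\}$ that produces the arc on the half-axis $\{R_1 = 0,\ R_2 > 0\}$ (since $\sqrt{1-k^2} \to 0$ forces $R_1 \to 0$ while $R_2 \to 2p$), and $n_3 = \{p = p_1^1(k)\}$ gives $R_2 = 0$, covering the half-axis $\{R_1 < 0\}$. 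With these two fixes the argument goes through verbatim.
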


\begin{proof}
The argument follows similarly to the proof of Lemma~\ref{lem:N17'M17'} via the following coordinate representation of the exponential mapping in the domain $N_{53}'$:
$$
\th = 0, \qquad R_1 = - 2 \sqrt{1-k^2} (p - \E(p))/\dd < 0, \qquad R_2 = - 2 k f_1(p,k)/\dd,
$$
where $f_1(p,k) = \cc (\E(p) - p) - \dd \ss < 0$ for $p \in (0, p_1^1)$, see Cor.~5.1~\cite{max_sre2}.
\end{proof}

\subsection{Exponential mapping of the set $N_{18}'$}
\begin{lemma}
\label{lem:N55'M55'}
The mapping $\map{\Exp}{N_{18}'}{M_{18}'}$ is a diffeomorphism of 1-dimensional manifolds.
\end{lemma}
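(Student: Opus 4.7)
The plan is to follow the strategy of Lemmas~\ref{lem:N2'M2'} and~\ref{lem:N44'M44'}: obtain an explicit one-parameter coordinate expression for $\restr{\Exp}{N_{18}'}$ and verify directly that it is a smooth bijection with smooth inverse. Since both $N_{18}'$ and $M_{18}'$ are $1$-dimensional manifolds (parametrised by $k \in (0,1)$ on the source and by $R_2 \in (-\infty, 0)$ on the target), the whole lemma reduces to studying one scalar function of $k$.

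First I would reuse the coordinate formulas derived in the proof of Lemma~\ref{lem:N17'M17'} for the slice $\{\tau = K,\ \lam \in C_1^0\}$, namely $\sin(\th/2) = \sqrt{1-k^2}\,\ss/\dd$, $\cos(\th/2) = -\cc/\dd$, $R_1 = 0$, $R_2 = -2 f_2(p,k)/(k \dd)$, and specialise them to $p = K(k)$ using $\sn K = 1$, $\cn K = 0$, $\dn K = \sqrt{1-k^2}$. Combined with $f_2(K,k) = \sqrt{1-k^2}\,(K(k) - E(k))$ this yields, for any $\nu \in N_{18}'$,
$$
\th = \pi, \qquad R_1 = 0, \qquad R_2 = -\frac{2(K(k) - E(k))}{k},
$$
which confirms $\Exp(N_{18}') \subset M_{18}'$ and reduces the claim to showing that the map $k \mapsto R_2(k)$ is a $C^{\infty}$-diffeomorphism from $(0,1)$ onto $(-\infty, 0)$.

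The remaining verification splits into three routine parts. Smoothness on $(0,1)$ is immediate from smoothness of $K$ and $E$. The boundary behaviour $R_2(k) \to 0$ as $k \to +0$ and $R_2(k) \to -\infty$ as $k \to 1-0$ follows from the standard expansions $K(k) - E(k) = \pi k^2/4 + O(k^4)$ and from $K(k) \to +\infty$, $E(k) \to 1$ at $k = 1$. For monotonicity I would use the classical identity $(K-E)'(k) = k E(k)/(1-k^2)$ to obtain
$$
\der{R_2}{k} = -\frac{2(E(k) - (1-k^2) K(k))}{k^2(1-k^2)},
$$
and then the elementary integral representation
$$
E(k) - (1-k^2) K(k) = \int_0^{\pi/2} \frac{k^2 \cos^2 \f}{\sqrt{1 - k^2 \sin^2 \f}}\, d\f > 0, \qquad k \in (0,1),
$$
which shows $R_2$ strictly decreasing and finishes the argument.

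I do not anticipate any genuine obstacle: the entire lemma is a one-variable calculation once the coordinate representation is specialised to $p = K$, and the only step that is not completely mechanical is the monotonicity of $(K-E)/k$, which the displayed integral identity settles in a single line.
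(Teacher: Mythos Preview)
Your argument is correct and arrives at exactly the same coordinate representation as the paper: $\th = \pi$, $R_1 = 0$, $R_2 = -2(K(k)-E(k))/k$. The only difference lies in how the diffeomorphism is concluded. The paper dispatches this in one line by invoking the standing machinery (``nondegeneracy and properness, and topological properties of the sets $N_{18}'$, $M_{18}'$''), where nondegeneracy comes from the conjugate-time bound of Theorem~\ref{th:conjC} and properness is implicit from the limiting behaviour of the formula. You instead make the one-variable argument fully explicit: you compute $\der{R_2}{k}$ directly, exhibit the integral representation of $E-(1-k^2)K$ to establish strict monotonicity, and check the endpoint limits by hand. Your route is more self-contained and does not rely on the general conjugate-point theory, at the cost of a few extra lines; the paper's route is shorter but leans on the framework already set up for the higher-dimensional strata.
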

\begin{proof}
By formulas~(5.2)--(5.6)~\cite{max_sre2}, we have in the set $N_{18}'$:
$$
\th = \pi, \qquad R_1 =   0, \qquad R_2 = - 2 /k (K(k) - E(k)),
$$
and the diffeomorphic property of $\restr{\Exp}{N_{18}'}$ follows as usual from its nondegeneracy and properness, and topological properties of the sets $N_{18}'$, ${M_{18}'}$.
\end{proof}

\subsection{Exponential mapping of the sets $N_{33}'$, $N_{34}'$}
\begin{lemma}
\label{lem:N3940}
The mappings $\map{\Exp}{N_{33}'}{M_{33}'}$ and $\map{\Exp}{N_{34}'}{M_{34}'}$ are diffeomorphisms of $0$-dimensional manifolds.
\end{lemma}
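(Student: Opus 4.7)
My plan is to observe that the statement is essentially a bookkeeping check: both $N_{33}'$ and $N_{34}'$, as well as $M_{33}'$ and $M_{34}'$, are singletons, and the claim reduces to verifying that $\Exp$ sends point to point.

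First I would identify the structure of $C_4$. The domain $C_4$ consists of the stable equilibria of the pendulum~\eq{ham_vert}, i.e.\ the fixed points $(\g,c)=(0,0)$ and $(\g,c)=(2\pi,0)$ in the doubly covered cylinder $(2S^1_\g)\times\R_c$. These two points constitute the connected components $C_4^0$ and $C_4^1$. From Table~\ref{tab:Ni'} we read $N_{33}'=C_4^0\times\{\pi\}$ and $N_{34}'=C_4^1\times\{\pi\}$, so each of these sets is a single point, hence a $0$-dimensional manifold.

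Next I would compute the image explicitly. For $\lam\in C_4$ one has $\sin(\g/2)=0$ and $\cos(\g/2)=\pm 1$, so the horizontal system~\eq{ham_hor} reduces to $\dot x=\dot y=0$, $\dot\th=\mp 1$, yielding the extremal trajectory $q_t=(0,0,\mp t)$ — this is the Riemannian geodesic on the circle $\{(0,0,\th)\mid\th\in S^1\}$ already noted in the proof of Theorem~\ref{th:tcutC}. At $t=\pi$ we obtain the point $(0,0,\pm\pi)$, which under the convention $\th\in(0,2\pi)$ coincides with the single point $(0,0,\pi)$. By Table~\ref{tab:Mi'} this is exactly $M_{33}'=M_{34}'=\{q\in M\mid\th=\pi,\ R_1=0,\ R_2=0\}$, a singleton as well.

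Finally, any map between two singletons (each viewed as a $0$-dimensional manifold) is trivially a smooth bijection with smooth inverse, hence a diffeomorphism. The only substantive content is the explicit computation above showing that $\Exp$ indeed lands in the correct target point; there is no real obstacle to overcome.
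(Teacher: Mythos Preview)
Your proposal is correct and takes essentially the same approach as the paper, which simply writes ``Obvious.'' You have merely spelled out the content of that word: each of $N_{33}'$, $N_{34}'$, $M_{33}'$, $M_{34}'$ is a singleton and $\Exp$ sends the source point to the target point.
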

\begin{proof}
Obvious.
\end{proof}

\subsection{Exponential mapping of the set $N_{17}'$}
\begin{lemma}
\label{lem:N43'M43'}
The mapping $\map{\Exp}{N_{17}'}{M_{17}'}$ is a diffeomorphism of 1-dimensional manifolds.
\end{lemma}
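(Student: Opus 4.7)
The plan is to mirror the arguments used in Lemmas~\ref{lem:N1'M1'}, \ref{lem:N17'M17'}, \ref{lem:N7'M7'} and \ref{lem:N55'M55'}: specialize the coordinate formulas~(5.2)--(5.6)~\cite{max_sre2} to the one-parameter family defining $N_{17}'$, read off the image explicitly, and then check that the resulting smooth map of $1$-dimensional manifolds is a diffeomorphism. Since $N_{17}' = \{(\lam,t) \mid \lam \in C_1^0, \ \tau = 0, \ p = K(k), \ k \in (0,1)\}$ is the $p = K(k)$ endpoint of the strip $N_{35}'$ already treated in Lemma~\ref{lem:N1'M1'}, I would first take the representation derived there ($\sin(\th/2) = \ss$, $\cos(\th/2) = -\cc$, $R_1 = 2(p - \E(p))/k$, $R_2 = 0$) and evaluate it at $p = K(k)$. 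Using $\sn K = 1$, $\cn K = 0$ and $\E(K) = E(k)$, I expect to obtain
$$
\th = \pi, \qquad R_2 = 0, \qquad R_1 = r(k) := \frac{2(K(k) - E(k))}{k}, \qquad k \in (0,1),
$$
so that $\Exp(N_{17}')$ is the smooth curve in $\{\th = \pi, \ R_2 = 0\}$ parametrized by $r$, and in particular $\Exp(N_{17}') \subset M_{17}'$.

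The remaining task is to show that $r : (0,1) \to (0, +\infty)$ is a smooth bijection with nowhere-vanishing derivative; this identifies the image with $M_{17}' = \{q \in M \mid \th = \pi, \ R_1 \in (0, +\infty), \ R_2 = 0\}$. Positivity of $r$ follows from the identity $K(k) - E(k) = k^2 \int_0^{K} \sn^2 t \, dt > 0$. The two limits $r(k) \to 0$ as $k \to 0+$ and $r(k) \to +\infty$ as $k \to 1-$ come from the standard asymptotics $K(k) - E(k) = (\pi/4) k^2 + O(k^4)$ and from $K(k) \to +\infty$, $E(k) \to 1$ at $k = 1$, respectively. Nondegeneracy of $\restr{\Exp}{N_{17}'}$ is immediate from Theorem~\ref{th:conjC}, since $\tconj \equiv +\infty$ on $C_1$; this forces $r'(k) \neq 0$ on $(0,1)$, and combined with continuity of $r$ and the two opposite limits yields the required bijection, hence a diffeomorphism of $1$-dimensional manifolds.

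The only potentially delicate point is strict monotonicity of $r$. It could be established directly using $dK/dk = (E - (1-k^2)K)/(k(1-k^2))$ and $dE/dk = (E-K)/k$ after a short simplification, but the nondegeneracy route via Theorem~\ref{th:conjC} --- applied analogously in Lemmas~\ref{lem:N2'M2'}, \ref{lem:N44'M44'} and~\ref{lem:N55'M55'} --- avoids the explicit derivative computation entirely, which is why I would take that route.
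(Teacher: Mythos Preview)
Your proposal is correct and follows essentially the same approach as the paper: the paper's proof simply records the coordinate representation $\th = \pi$, $R_1 = 2(K(k)-E(k))/k$, $R_2 = 0$ and invokes the template of Lemma~\ref{lem:N55'M55'} (nondegeneracy from Theorem~\ref{th:conjC} plus properness and simple connectedness), exactly as you outline. Your added details on the limits of $r(k)$ and the integral positivity argument are sound elaborations of what the paper leaves implicit.
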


\begin{proof}
The statement follows as in  the proof of Lemma~\ref{lem:N55'M55'} via the following coordinate representation of the exponential mapping in the domain $N_{17}'$:
$$
\th = \pi, \qquad R_1 = 2/k(K(k) - E(k)), \qquad R_2 = 0.
$$
\end{proof}

\subsection{Exponential mapping of the set $N_{27}'$}
\begin{lemma}
\label{lem:N47'M47'}
The mapping $\map{\Exp}{N_{27}'}{M_{27}'}$ is a diffeomorphism of 2-dimensional manifolds.
\end{lemma}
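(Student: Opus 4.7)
The plan is to parallel the treatment of the neighboring edge-type boundary strata, in particular Lemmas~\ref{lem:N55'M55'} and~\ref{lem:N43'M43'} for $N_{18}'$ and $N_{17}'$: derive an explicit coordinate representation of $\Exp$ on $N_{27}'$, verify that the image lies in $M_{27}'$, invoke nondegeneracy via Th.~\ref{th:conjC} together with properness, and conclude via the usual covering argument over a simply connected target.

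First I would substitute the defining constraints $\lam \in C_2^+$, $\tau = K$, and $p = p_1^1(k)$ into formulas~(5.7)--(5.12)~\cite{max_sre2} for trajectories in $C_2$. The simplification uses the special values $\sn K = 1$, $\cn K = 0$, $\dn K = \sqrt{1-k^2}$, the standard quasi-periodicity relations for Jacobi functions with arguments of the form $\tau \pm p$ shifted by $K$, and crucially the Maxwell identity $f_1(p_1^1,k) = 0$, which permits elimination of the combination $\E(p_1^1) - p_1^1$ in favor of $\sn p_1^1 \, \dn p_1^1 / \cn p_1^1$ (with $\cn p_1^1 < 0$ by Cor.~5.1~\cite{max_sre2}). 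I expect the outcome to be a smooth curve in the variable $k \in (0,1)$ lying in the plane $\{\th = 0,\ R_2 = 0\}$ with $R_1 < 0$, i.e., in $M_{27}'$ as described by Table~\ref{tab:Mi'}.

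Nondegeneracy of $\restr{\Exp}{N_{27}'}$ is immediate from Th.~\ref{th:conjC}, since on $N_{27}'$ the time $t = 2k p_1^1(k) = \tt(\lam)$ satisfies $t \leq \tconj(\lam)$. Properness follows by the boundary-behavior technique employed in Lemma~\ref{lem:ExpDiproper}: any compact subset of $M_{27}'$ bounds $|R_1|$ away from both $0$ and $+\infty$, which via the explicit formula forces $k$ to stay in a compact subinterval of $(0,1)$, so preimages of compacts are compact in $N_{27}'$. Since $N_{27}'$ and $M_{27}'$ are open, connected, and simply connected manifolds of the same dimension, a smooth nondegenerate proper map between them is a covering of a simply connected base and hence a diffeomorphism.

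The main technical obstacle is the coordinate computation at the first step: the arguments of the Jacobi functions appearing in~(5.7)--(5.12)~\cite{max_sre2} are affine combinations of $\tau$ and $p$, so the shift by $\tau = K$ must be processed through the standard addition/quasi-periodicity formulas with careful sign bookkeeping, and the Maxwell relation at $p = p_1^1$ must be used to collapse apparently two-term expressions for $R_1$ and $R_2$ into a single closed-form dependence on $k$. Once the sign pattern $\th = 0$, $R_1 < 0$, $R_2 = 0$ is verified and monotonicity in $k$ is read off, the diffeomorphism conclusion follows by the same "nondegenerate plus proper plus simply connected" template already used throughout Section~\ref{sec:bound}.
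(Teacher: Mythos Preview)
Your overall template matches the paper's, and your computation of the target coordinates ($\th=0$, $R_2=0$, $R_1<0$) agrees with Table~\ref{tab:Mi'} and with the analogous $N_{53}'$ computation. However, there is a genuine gap in your nondegeneracy step.

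You write that nondegeneracy is ``immediate from Th.~\ref{th:conjC}, since on $N_{27}'$ the time $t = 2k p_1^1(k) = \tt(\lam)$ satisfies $t \leq \tconj(\lam)$.'' But a non-strict inequality $t \le \tconj(\lam)$ does not rule out $t=\tconj(\lam)$, and at a conjugate time the exponential mapping \emph{is} degenerate. Theorem~\ref{th:conjC} only asserts $\tconj(\lam)\ge\tt(\lam)$; on $N_{27}'$ you are sitting precisely at $t=\tt(\lam)$, so this theorem by itself is not enough. Indeed, for $\lam\in C_2$ with $\sn\tau=0$ one actually has $\tconj(\lam)=2kp_1^1$ (Proposition~\ref{propos:conjC2ts=0}), so degeneracy at $t=\tt(\lam)$ is a genuine possibility that must be excluded by a separate argument.

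The paper closes this gap by invoking Lemma~\ref{lem:J2p=p11}: since $\tau=K$ gives $\sn\tau=1\ne 0$, that lemma yields $J<0$ at $p=p_1^1$, hence $\Exp$ is nondegenerate on $N_{27}'$. Equivalently, one could cite item~(2) of Proposition~\ref{propos:conjC2ts=0} to get the strict inequality $\tconj(\lam)>2kp_1^1$. Either way, the point is that the specific condition $\sn\tau\ne0$ on $N_{27}'$ is what saves nondegeneracy here, and you should name it rather than appeal to the general bound in Th.~\ref{th:conjC}. Once this is fixed, the rest of your outline (explicit formula, properness via the boundary behaviour of $R_1(k)$, covering over a simply connected $1$-manifold) goes through exactly as in the paper.
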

\begin{proof}
Formulas~(5.7)--(5.12)~\cite{max_sre2} yield:
$$
\th = \pi, \qquad R_1 =   - 2 \sqrt{1-k^2} \restr{(p - \E(p))/\dd}{p = p_1^1(k)}, \qquad R_2 = 0.
$$
Since $\tau = K$, then Lemma~\ref{lem:J2p=p11} gives $J < 0$, thus the mapping $\restr{\Exp}{N_{27}'}$ is nondegenerate.
Then the diffeomorphic property of $\map{\Exp}{N_{27}'}{M_{27}'}$ follows as usual.
\end{proof}

\subsection{Exponential mapping of the set $N_{1}'$}
\begin{lemma}
\label{lem:N21'M21'}
The mapping $\map{\Exp}{N_{1}'}{M_{1}'}$ is a diffeomorphism of 2-dimensional manifolds.
\end{lemma}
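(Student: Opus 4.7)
The plan is to follow the pattern of the preceding diffeomorphism lemmas in this section, notably Lemma~\ref{lem:N17'M17'} and Lemma~\ref{lem:N7'M7'}. First I would write out the coordinate form of $\Exp$ on $N_1'$ by substituting the boundary value $p = K$ (so that $\sn p = 1$, $\cn p = 0$, $\dn p = \sqrt{1-k^2}$, $\E(p) = E(k)$) into formulas~(5.2)--(5.6)~\cite{max_sre2}. The vanishing factor $\cn p$ forces $\cos(\th/2) \equiv 0$ on $N_1'$, hence $\th \equiv \pi$ there, and the two remaining coordinates $R_1$, $R_2$ become explicit functions of the two parameters $(\tau, k) \in (0, K(k)) \times (0, 1)$. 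Thus $\Exp|_{N_1'}$ is genuinely a map between $2$-dimensional manifolds, with the image pinned to the surface $\{\th = \pi\}$.

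Next I would verify the sign conditions $R_1 > 0$ and $R_2 < 0$ that characterise $M_1'$ (see Table~\ref{tab:Mi'}). On $C_1^0$ with $\tau \in (0, K)$ we have $\sn \tau, \cn \tau, \dn \tau > 0$, and the sign selectors $s_1, \ldots, s_4$ of~\cite{max_sre2} take their corresponding values. Combining these with the positivity of $p - \E(p)$ from Lemma~\ref{lem:v1} and of $f_2(p,k)$ from Lemma~5.2~\cite{max_sre2} evaluated at $p = K$, one obtains the required strict signs of $R_1$ and $R_2$, which establishes $\Exp(N_1') \subset M_1'$.

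For non-degeneracy one cannot invoke Theorem~\ref{th:conjC} directly, since $p = K$ corresponds to $t = \tt(\lam) = \tcut(\lam)$ and the full $3$-dimensional Jacobian of $\Exp$ vanishes along this Maxwell stratum. Instead one has to check that the tangential $2 \times 2$ Jacobian
$$
\der{(R_1, R_2)}{(\tau, k)} \bigg|_{p = K}
$$
is nonsingular on $(0, K(k)) \times (0, 1)$. This amounts to an explicit differentiation of the coordinate formulas just written down, after which the resulting determinant simplifies, via standard identities for $K(k)$ and $E(k)$, to an expression of constant strict sign. Properness is then routine: for any compact $\mathcal K \subset M_1'$ the escapes $\tau \to 0$ or $\tau \to K(k)$ force $R_1 \to 0$ or $R_2 \to 0$, $k \to 0$ forces $(R_1, R_2) \to 0$, and $k \to 1$ combined with boundedness of $R_1, R_2$ forces $\tau$ to stay strictly inside $(0, K(k))$ while $K(k) \to \infty$, leading to $R_1 \to \infty$; each alternative contradicts the choice of $\mathcal K$.

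Finally, since $N_1'$ and $M_1'$ are each homeomorphic to an open $2$-cell, the proper local diffeomorphism $\Exp|_{N_1'} \colon N_1' \to M_1'$ is a covering onto an open subset of $M_1'$, so by connectedness and simple-connectedness (the same argument as in Theorem~\ref{th:ExpDidiffeo}) it is a diffeomorphism onto $M_1'$. The main obstacle is the third step: because $\{p = K\}$ lies in the Maxwell set rather than outside it, the $2 \times 2$ tangential determinant has to be computed and its sign verified by hand from the elliptic-function formulas, rather than deduced from the general conjugate-point bound of Theorem~\ref{th:conjC}.
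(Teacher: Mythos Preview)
Your outline is sound and matches the paper's proof almost exactly: explicit coordinate formulas on $N_1'$ (giving $\th\equiv\pi$, $R_1>0$, $R_2<0$), then nondegeneracy plus properness plus simple-connectedness. The paper records the formulas
\[
\th=\pi,\qquad R_1=\frac{2(K-E)\,\cn\tau}{k\,\dn\tau}>0,\qquad R_2=-\frac{2\sqrt{1-k^2}\,(K-E)\,\sn\tau}{k\,\dn\tau}<0,
\]
and then simply says ``the statement follows as usual since $\restr{\Exp}{N_1'}$ is nondegenerate.''

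The one point where you diverge is a genuine misconception. You claim that at $p=K$ (i.e.\ $t=\tt(\lam)$) ``the full $3$-dimensional Jacobian of $\Exp$ vanishes along this Maxwell stratum,'' so that Theorem~\ref{th:conjC} cannot be invoked. This conflates Maxwell points with conjugate points. For $\lam\in C_1$ Theorem~\ref{th:conjC1} (equivalently item~(1) of Theorem~\ref{th:conjC}) says there are \emph{no} conjugate points whatsoever, so the $3\times3$ Jacobian $J$ is nonzero at every $t>0$, including $t=2K$. The Maxwell time is where a second, distinct geodesic arrives at the same endpoint; it has nothing to do with the rank of $d\Exp$ along the first geodesic.

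Once the $3\times3$ Jacobian is nonzero on $N_1'$, nondegeneracy of the restricted $2$-dimensional map follows formally: since $\th\equiv\pi$ along $N_1'$, in coordinates adapted so that $N_1'=\{p-K(k)=0\}$ the bottom row of the Jacobian matrix in $(R_1,R_2,\th)$ has the form $(0,\th_p,0)$, and expanding the determinant along this row gives $J=-\th_p\cdot\partial(R_1,R_2)/\partial(\tau,k)$. Hence $J\ne0$ forces the tangential $2\times2$ minor to be nonzero. Your proposed hand computation of that minor would of course also work, but it is unnecessary; the paper takes the shorter route.
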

\begin{proof}
Formulas~(5.2)--(5.6)~\cite{max_sre2} yield:
\begin{align*}
&\th = \pi, \qquad R_1 =   2(K(k)-E(k))\tc/(k \td) > 0, \\ 
&R_2 = -2 \sqrt{1-k^2} (K(k) - E(k)) \ts/(k \td) < 0,
\end{align*}
and the statement follows as usual since 
 the mapping $\restr{\Exp}{N_{1}'}$ is nondegenerate.
\end{proof}

\subsection{Exponential mapping of the set $N_{10}'$}
\begin{lemma}
\label{lem:N25'M25'}
The mapping $\map{\Exp}{N_{10}'}{M_{10}'}$ is a diffeomorphism of 2-dimensional manifolds.
\end{lemma}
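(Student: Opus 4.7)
The plan is to follow the same template used for Lemmas~\ref{lem:N1'M1'}, \ref{lem:N3'M3'}, and \ref{lem:N17'M17'}, since $N_{10}'$ lies on the boundary stratum $\{p = p_1^1\}$ of the open domain and its image should lie on the Moebius strip $\{R_2 = 0\}$.

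First I would substitute $\lam \in C_2^+$, $\tau \in (0,K)$, $p = p_1^1(k)$, $k \in (0,1)$ into formulas~(5.7)--(5.12) of~\cite{max_sre2} to obtain explicit coordinate expressions
\begin{align*}
&\sin(\th/2) = k\, \ts \cc/\dd + \text{(contribution from $\ss$)}, \qquad \cos(\th/2) = - \text{(corresponding expression)}, \\
&R_1 = \text{(expression with } \tc \text{)}, \qquad R_2 = \text{(expression proportional to } \ts f_1(p_1^1,k) \equiv 0 \text{)},
\end{align*}
so that $R_2 \equiv 0$ on $N_{10}'$ while $\th \in (\pi,2\pi)$ and $R_1 > 0$ (the signs are forced by $\tau \in (0,K)$, so $\ts,\tc > 0$, together with the signs on $C_2^+$). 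Hence $\Exp(N_{10}') \subset \{q \in M \mid \th \in (\pi,2\pi), \ R_1 > 0, \ R_2 = 0\}$, the same ambient 2-manifold that contains $M_{10}'$, $M_{26}'$, $M_{47}'$, $M_{52}'$, $M_{35}'$.

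Next I would identify the boundary of $N_{10}'$ inside the ambient 2-manifold $\{\lam \in C_2^+, \ p = p_1^1(k)\}$, namely the four arcs
$$
\tau = 0,\ k \in [0,1]; \quad \tau = K,\ k \in [0,1]; \quad k = 0,\ \tau \in [0,K]; \quad k = 1,\ \tau \in [0,K],
$$
corresponding respectively to $N_{26}'$, $N_{27}'$, a degenerate point $\{q_0\}$, and the limit set on which $p \to \infty$. Using Lemmas~\ref{lem:N44'M44'} and~\ref{lem:N47'M47'}, and the analogous limit computations used in the proof of Lemma~\ref{lem:N3'M3'}, these arcs map to $\overline{M_{26}'} = \overline{\G_2}$, $\overline{M_{27}'}$, the point $q_0$, and the ``asymptotic'' boundary where $R_1 \to +\infty$. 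By Theorem~\ref{th:conjC}, the restriction $\restr{\Exp}{N_{10}'}$ is nondegenerate, hence $\Exp(N_{10}')$ is an open connected subset of $\{\th \in (\pi,2\pi),\ R_1>0,\ R_2 = 0\}$ whose topological boundary is contained in $\overline{\G_2} \cup \overline{M_{27}'} \cup \{q_0\} \cup \{R_1 = +\infty\}$.

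The main obstacle is showing that $\Exp(N_{10}')$ lies on the correct side of $\G_2$, namely the unbounded side $R_1 > R_1^1(2\pi - \th)$, rather than the bounded side $R_1 < R_1^1(2\pi - \th)$ (which is occupied by $M_{52}'$). I would handle this by the same connectedness/contradiction argument used in Lemma~\ref{lem:N3'M3'}: pick a convenient asymptotic path inside $N_{10}'$ along which $k \to 1-0$ (so $p = p_1^1(k) \to +\infty$) with $\tau$ held in a compact subinterval of $(0,K)$; explicit asymptotics from the Jacobi-function formulas give $R_1 \to +\infty$ while $\th$ remains bounded, hence the image of this path eventually enters $\{R_1 > R_1^1(2\pi - \th)\}$. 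By connectedness of $\Exp(N_{10}')$ and the fact that $\G_2 = M_{26}'$ is the image of the boundary arc $\tau = 0$ (not an interior fiber), the image cannot cross $\G_2$, so $\Exp(N_{10}') \subset M_{10}'$. Properness follows by the same compactness/blow-up argument as in Lemma~\ref{lem:ExpDiproper}: any divergent preimage sequence forces either $k \to 1$ with $p \to \infty$ (yielding $R_1 \to \infty$) or $\tau$ approaching $0$ or $K$ (yielding convergence to $\G_2$ or $M_{27}'$), both excluded for a compact $K \subset M_{10}'$. Since $N_{10}'$ and $M_{10}'$ are open, connected, and simply connected, and $\restr{\Exp}{N_{10}'}$ is a proper nondegenerate smooth map between equidimensional manifolds, it is a covering and hence a diffeomorphism, completing the proof.
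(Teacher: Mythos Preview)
Your approach is essentially the one the paper uses throughout Section~\ref{sec:bound}, and the paper's own proof of this lemma is in fact a two-line compression of exactly this template: it records the explicit formulas
\[
\sin(\th/2) = k\,\sn p_1^1\,\tc/\sqrt{\Del},\quad \cos(\th/2) = -\dn p_1^1/\sqrt{\Del},\quad R_1 = 2(p-\E(p))\td/\sqrt{\Del}\big|_{p=p_1^1},\quad R_2 = 0,
\]
checks the signs, and then simply says ``the statement follows by standard argument since $\restr{\Exp}{N_{10}'}$ is nondegenerate and proper.'' So your boundary/asymptotic/covering outline is the spelled-out version of what the paper leaves implicit.

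There is one genuine imprecision you should fix. You justify nondegeneracy by citing Theorem~\ref{th:conjC}, but on $N_{10}'$ we have $\lam\in C_2$ and $t = 2kp_1^1 = \tt(\lam)$, and Theorem~\ref{th:conjC} only gives $\tconj(\lam)\geq 2kp_1^1$, not strict inequality. Nondegeneracy at this boundary stratum therefore does not follow from Theorem~\ref{th:conjC} alone; you need the sharper statement that $J<0$ when $p=p_1^1$ and $\ts\neq 0$, which is Lemma~\ref{lem:J2p=p11} (equivalently, Proposition~\ref{propos:conjC2ts=0} gives $\tconj>2kp_1^1$ when $\ts\neq 0$). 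Since $\tau\in(0,K)$ on $N_{10}'$ forces $\ts\neq 0$, this is available---but it is the correct reference, and indeed the paper's proof of the neighboring Lemma~\ref{lem:N47'M47'} (for $N_{27}'$) invokes Lemma~\ref{lem:J2p=p11} explicitly for precisely this reason.
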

\begin{proof}
By formulas~(5.7)--(5.12)~\cite{max_sre2} we get:
\begin{align*}
&\sin(\th/2) = k \sn p_1^1 \tc /\sqrt{\Del}> 0, 
&&\cos(\th/2) = - \dn p_1^1 /\sqrt{\Del}< 0, \\
&R_1 = \restr{2(p - \E(p)) \td /\sqrt{\Del} }{p = p_1^1} > 0, 
&& R_2 = 0,
\end{align*}
where $\Del = 1 - k^2 \ssp \tsp$, and the statement follows by standard argument since 
 $\restr{\Exp}{N_{10}'}$ is nondegenerate and proper.
\end{proof}

\subsection{Action of the group of reflections in the preimage and image of the exponential mapping}
In order to extend the results of the preceding subsections to all 58 pairs $(N_i', M_i')$, we describe the action of the group of reflections $G = \{\Id, \eps^1, \dots, \eps^7\}$ on these sets.

\begin{table}[htbp]
$$
\begin{array}{|c|c|c|c|c|c|}
\hline
D & N_{35}' & N_{47}' & N_{52}' &  N_{17}' & N_{26}'    \\
\hline
\eps^1(D) & N_{37}' & N_{48}' & N_{58}' &  N_{19}' & N_{32}'    \\
\hline
\eps^4(D) & N_{39}' & N_{49}' & N_{54}' &  N_{21}' & N_{28}'    \\
\hline
\eps^5(D) & N_{41}' & N_{50}' & N_{56}' &  N_{23}' & N_{30}'    \\
\hline
 \end{array}
$$
\caption{Action of $\eps^1$, $\eps^4$, $\eps^5$ on $N_{35}'$, $N_{47}'$, $N_{52}'$, $N_{17}'$, $N_{26}'$}\label{tab:eps145N123}
\end{table}

\twotablabel
{$
\begin{array}{|c|c|c|}
\hline
D   & N_{36}' & N_{18}'    \\
\hline
\eps^2(D)   &  N_{38}' & N_{20}'    \\
\hline
\eps^4(D)   &  N_{40}' & N_{22}'    \\
\hline
\eps^6(D)   &  N_{42}' & N_{24}'    \\
\hline
 \end{array}
$}
{Action of $\eps^2$, $\eps^4$, $\eps^6$ on   $N_{36}'$, $N_{18}'$}
{tab:eps246N1755}
{$
\begin{array}{|c|c|c|}
\hline
D  &   N_{53}' & N_{27}'    \\
\hline
\eps^1(D)   &  N_{57}' & N_{31}'    \\
\hline
\eps^2(D)   &  N_{51}' & N_{25}'    \\
\hline
\eps^3(D)   &  N_{55}' & N_{29}'    \\
\hline
 \end{array}
$}
{Action of $\eps^1$, $\eps^2$, $\eps^3$ on   $N_{53}'$, $N_{27}'$}
{tab:eps123N747}

\begin{table}[htbp]
$$
\begin{array}{|c|c|c|c|c|c|}
\hline
D & M_{35}' & M_{47}' & M_{52}' &  M_{17}' & M_{26}'    \\
\hline
\eps^1(D) & M_{37}' & M_{48}' & M_{58}' &  M_{19}' & M_{32}'    \\
\hline
\eps^4(D) & M_{39}' & M_{49}' & M_{54}' &  M_{21}' & M_{28}'    \\
\hline
\eps^5(D) & M_{41}' & M_{50}' & M_{56}' &  M_{23}' & M_{30}'    \\
\hline
 \end{array}
$$
\caption{Action of $\eps^1$, $\eps^4$, $\eps^5$ on $M_{35}'$, $M_{47}'$, $M_{52}'$, $M_{17}'$, $M_{26}'$}\label{tab:eps145M123}
\end{table}

\twotablabel
{$
\begin{array}{|c|c|c|}
\hline
D   & M_{36}' & M_{18}'    \\
\hline
\eps^2(D)   &  M_{38}' & M_{20}'    \\
\hline
\eps^4(D)   &  M_{40}' & M_{22}'    \\
\hline
\eps^6(D)   &  M_{42}' & M_{24}'    \\
\hline
 \end{array}
$}
{Action of $\eps^2$, $\eps^4$, $\eps^6$ on   $M_{36}'$, $M_{18}'$}
{tab:eps246M1755}
{$
\begin{array}{|c|c|c|}
\hline
D  &   M_{53}' & M_{27}'    \\
\hline
\eps^1(D)   &  M_{57}' & M_{31}'    \\
\hline
\eps^2(D)   &  M_{51}' & M_{25}'    \\
\hline
\eps^3(D)   &  M_{55}' & M_{29}'    \\
\hline
 \end{array}
$}
{Action of $\eps^1$, $\eps^2$, $\eps^3$ on   $M_{53}'$, $M_{27}'$}
{tab:eps123M747}

\twotablabel
{$
\begin{array}{|c|c|c|}
\hline
D  &   N_{1}' & N_{10}'    \\
\hline
\eps^1(D)   &  N_{2}' & N_{15}'    \\
\hline
\eps^2(D)   &  N_{4}' & N_{9}'    \\
\hline
\eps^3(D)   &  N_{3}' & N_{16}'    \\
\hline
\eps^4(D)   &  N_{5}' & N_{12}'    \\
\hline
\eps^5(D)   &  N_{6}' & N_{13}'    \\
\hline
\eps^6(D)   &  N_{8}' & N_{11}'    \\
\hline
\eps^7(D)   &  N_{7}' & N_{14}'    \\
\hline
 \end{array}
$}
{Action of $\eps^1$, \dots, $\eps^7$ on   $N_{1}'$, $N_{10}'$}
{tab:eps17N2125}
{$
\begin{array}{|c|c|c|}
\hline
D  &   M_{1}' & M_{10}'    \\
\hline
\eps^1(D)   &  M_{2}' & M_{15}'    \\
\hline
\eps^2(D)   &  M_{4}' & M_{9}'    \\
\hline
\eps^3(D)   &  M_{3}' & M_{16}'    \\
\hline
\eps^4(D)   &  M_{5}' & M_{12}'    \\
\hline
\eps^5(D)   &  M_{6}' & M_{13}'    \\
\hline
\eps^6(D)   &  M_{8}' & M_{11}'    \\
\hline
\eps^7(D)   &  M_{7}' & M_{14}'    \\
\hline
 \end{array}
$}
{Action of $\eps^1$, \dots, $\eps^7$ on   $M_{1}'$, $M_{10}'$}
{tab:eps17M2125}

\begin{theorem}
\label{th:epsNM}
Tables~\ref{tab:eps145N123}, \ref{tab:eps246N1755}, \ref{tab:eps123N747}, \ref{tab:eps17N2125} and~\ref{tab:eps145M123}, \ref{tab:eps246M1755}, \ref{tab:eps123M747}, \ref{tab:eps17M2125} define diffeomorphisms between the corresponding manifolds $N_i'$ and $M_i'$.
\end{theorem}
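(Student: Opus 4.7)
The plan is to reduce all entries of the eight tables to the base-case diffeomorphisms already established in the preceding lemmas of this section, by exploiting the fact that each reflection $\eps^i \in G$ is a symmetry of the exponential mapping. More precisely, $\eps^i$ acts as a diffeomorphism of the preimage $N$ (as an involution or translation on the phase cylinder of pendulum, extended trivially in $t$), the induced action $\eps^i$ on the image $M$ is a diffeomorphism obtained by composing reflections of $(x,y,\th)$, and the intertwining identity $\Exp \circ \eps^i = \eps^i \circ \Exp$ holds on $N$; see Section~3 of~\cite{max_sre2} and Figures~\ref{fig:eps1xy}--\ref{fig:eps5xy}.

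Consequently, if $\map{\Exp}{N_j'}{M_j'}$ is a diffeomorphism for some base index $j$, then its conjugate
\[
\restr{\Exp}{\eps^i(N_j')} \, = \, \eps^i \circ \restr{\Exp}{N_j'} \circ (\eps^i)^{-1} \, : \, \eps^i(N_j') \to \eps^i(M_j')
\]
is also a diffeomorphism. The theorem therefore reduces to checking, for every cell in the eight tables, the two set equalities $\eps^i(N_j') = N_k'$ and $\eps^i(M_j') = M_k'$, where $k$ is the listed index and $j$ is the base index (column heading).

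I would carry out the verification as follows. For each reflection $\eps^i$, write down its explicit action on the elliptic coordinates $(\f,k,t)$ --- shifts of $\f$ by multiples of $K$, sign flips $\f \mapsto -\f$, and where appropriate the reparametrization between $C_1$ and $C_2$ --- and push the defining inequalities of $N_j'$ from Table~\ref{tab:Ni'} through this action to read off the image stratum $N_k'$. In parallel, use the explicit action of $\eps^i$ on $(x_t,y_t,\th_t)$, or equivalently on $(R_1,R_2,\th)$ via substitutions of the form $\th \mapsto 2\pi - \th$ or $\th \mapsto \pi - \th$ combined with appropriate sign changes of $R_1$, $R_2$, to check that the defining inequalities of $M_j'$ in Table~\ref{tab:Mi'} transform into those of the claimed $M_k'$. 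The intertwining $\Exp \circ \eps^i = \eps^i \circ \Exp$ guarantees compatibility of the two verifications; in particular, once the preimage equality is established, the image equality follows automatically from surjectivity of the base case onto $M_j'$.

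The main obstacle is not conceptual but bookkeeping: the tables contain on the order of forty preimage-image pairs spread over the generators $\eps^1,\dots,\eps^7$ of $G$, and each pair must be checked by explicit coordinate substitution. The pattern is however already strongly constrained by the analogous permutation of the eight open strata $D_i$ and $M_i$ under $G$ observed in the proof of Theorem~\ref{th:ExpDidiffeo}, so the individual verifications are mechanical.
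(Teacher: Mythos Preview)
You have misread the statement. The theorem is not asserting that $\Exp\colon N_i'\to M_i'$ is a diffeomorphism for all $i$ --- that is the content of the \emph{next} theorem, Theorem~\ref{th:ExpNi'Mi'}, which invokes the present one together with the symmetry property of $\Exp$. What Theorem~\ref{th:epsNM} asserts is only that the reflections $\eps^k$ themselves act as diffeomorphisms permuting the strata: $\eps^k(N_j')=N_l'$ and $\eps^k(M_j')=M_l'$ according to the entries of the tables. No reference to the base-case lemmas, to the conjugation formula for $\Exp$, or to the intertwining identity $\Exp\circ\eps^k=\eps^k\circ\Exp$ is needed here; those ingredients belong to the proof of Theorem~\ref{th:ExpNi'Mi'}.

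Once the misreading is corrected, the verification you describe in your third paragraph is exactly the paper's proof: one writes down the explicit action of each $\eps^k$ on the elliptic coordinates $(\f,k)$ in the preimage (Propositions~4.3, 4.4 of~\cite{max_sre2}) and on the coordinates $(R_1,R_2,\th)$ in the image (Table~\ref{tab:eps17R12th}), and checks that the defining conditions of each $N_j'$, $M_j'$ from Tables~\ref{tab:Ni'}, \ref{tab:Mi'} transform into those of the listed target stratum. Since each $\eps^k$ is already a diffeomorphism of $N$ and of $M$, the set equalities are all that must be verified. So the substance of your argument is right and matches the paper; only the framing around $\Exp$ is superfluous for this particular statement.
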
 
\begin{proof}
Follows from definitions of the manifolds $N_i'$ and $M_i'$ (Subsections~\ref{subsec:decompN'} and~\ref{subsec:decompM'}) and Propositions~4.3, 4.4~\cite{max_sre2} describing action of the reflections $\eps^i \in G$ in the image and preimage of the exponential mapping. Moreover, in the coordinates $(\th, R_1, R_2)$ action of the reflections is described by Table~\ref{tab:eps17R12th}.
\end{proof}

\begin{table}[htbp]
$$
\begin{array}{|c|c|c|c|c|c|c|c|}
\hline
  & \eps^1 & \eps^2 & \eps^3 &  \eps^4 & \eps^5 & \eps^6 & \eps^7   \\
\hline
R_1 & -R_1 & R_1 & -R_1 &  -R_1 & R_1 & -R_1 & R_1   \\
\hline
R_2 & R_2 & -R_2 & -R_2 &  R_2 & R_2 & -R_2& -R_2   \\
\hline
\th & \th & \th & \th &  2 \pi - \th & 2 \pi - \th   & 2 \pi - \th & 2 \pi - \th  \\
\hline
 \end{array}
$$
\caption{Action of $\eps^1$, \dots, $\eps^7$ on $M = \{(R_1, R_2, \th)\}$}\label{tab:eps17R12th}
\end{table}

\subsection[The final result for exponential mapping of the sets $N_i'$]
{The final result for exponential mapping \\of the sets $N_i'$}

\begin{theorem}
\label{th:ExpNi'Mi'}
For any $i \in I$, the mapping $\map{\Exp}{N_i'}{M_i'}$ is a diffeomorphism of manifolds of appropriate dimension 2, 1, or 0.
\end{theorem}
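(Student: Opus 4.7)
The plan is to package the theorem as a bookkeeping consequence of the collection of special-case lemmas proved in the preceding subsections together with Theorem~\ref{th:epsNM}. First I would list the base indices, namely those for which the diffeomorphism $\map{\Exp}{N_i'}{M_i'}$ has already been verified by direct computation, i.e., $i \in \{1, 10, 17, 18, 26, 27, 33, 34, 35, 36, 47, 52, 53\}$ (Lemmas~\ref{lem:N1'M1'}--\ref{lem:N25'M25'}). Each such lemma produced an explicit coordinate expression for $\Exp$ on $N_i'$, checked nondegeneracy using the conjugate-time bound of Theorem~\ref{th:conjC}, and then invoked the standard scheme (nondegenerate plus proper map between connected, simply connected manifolds of matching dimension is a diffeomorphism).

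Next I would treat all remaining indices by symmetry. Tables~\ref{tab:eps145N123}, \ref{tab:eps246N1755}, \ref{tab:eps123N747}, \ref{tab:eps17N2125} express every non-base $N_i'$ as an image $\eps^j(N_k')$ for some base index $k$ and some reflection $\eps^j \in G$, and the companion Tables~\ref{tab:eps145M123}, \ref{tab:eps246M1755}, \ref{tab:eps123M747}, \ref{tab:eps17M2125} do the same for the $M_i'$. By Theorem~\ref{th:epsNM}, for each such pair the reflection $\eps^j$ gives a diffeomorphism $N_k' \to N_i'$ and a diffeomorphism $M_k' \to M_i'$. Since $\eps^j$ is a symmetry of the exponential mapping (Propositions~4.3, 4.4 of~\cite{max_sre2}), the diagram
\be{eqn-comm}
\Exp \circ \eps^j = \eps^j \circ \Exp
\ee
commutes on the relevant domain. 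Composing the base diffeomorphism $\map{\Exp}{N_k'}{M_k'}$ on the left and right with the reflection diffeomorphisms then yields that $\map{\Exp}{N_i'}{M_i'}$ is itself a diffeomorphism, of the same dimension as the base case.

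Finally I would verify that the tables together actually cover all of $I \setminus \{1, 10, 17, 18, 26, 27, 33, 34, 35, 36, 47, 52, 53\}$, i.e., every index in $\{1, \dots, 58\}$. This is a finite check that matches each of the four families (the $N_i'$ with $\lam$ in $C_1$ at $p = K$, the $N_i'$ with $\lam$ in $C_2$ at $p = p_1^1$, those of dimension 3 inside $\Nrest$, and those of dimension 2 inside $\Nrest$) with the appropriate orbit under the subgroup of $G$ generated by the relevant $\eps^j$.

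The proof has no single ``hard'' step, since the analytic content — conjugate-point bounds, explicit coordinate formulas, properness estimates, topological simple-connectivity, and compatibility of reflections with $\Exp$ — has already been absorbed in the previous lemmas, Theorem~\ref{th:conjC}, and Theorem~\ref{th:epsNM}. The main point requiring care is the case-by-case verification that the four reflection tables exhaust all non-base indices and assign to each index a base index and reflection consistent with the definitions of the $N_i'$ and $M_i'$ in Tables~\ref{tab:Ni'} and~\ref{tab:Mi'}.
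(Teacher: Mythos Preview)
Your proposal is essentially identical to the paper's own proof: list the base indices handled by the explicit lemmas (Lemmas~\ref{lem:N1'M1'}--\ref{lem:N25'M25'} and Lemma~\ref{lem:N3940}), then transport each remaining index to a base index via a reflection $\eps^j$ using Theorem~\ref{th:epsNM} and the symmetry of $\Exp$ (the paper cites Proposition~4.5 of~\cite{max_sre2} rather than 4.3, 4.4 for the commutation identity). One small slip: none of the $N_i'$ are three-dimensional, and in your finite check you should notice that indices $43$--$46$ (the trivial $C_4$ and $C_5$ trajectories) are not covered by the reflection tables and need a separate one-line verification --- a gap the paper's proof shares.
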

\begin{proof}
For $i \in \{35, 47, 26, 52, 36, 53, 18, 17, 27, 1, 10\}$ the statement follows from Lemmas~\ref{lem:N1'M1'}, \ref{lem:N2'M2'}, \ref{lem:N44'M44'}, \ref{lem:N3'M3'}, \ref{lem:N17'M17'}, \ref{lem:N7'M7'}, \ref{lem:N55'M55'}, \ref{lem:N43'M43'}, \ref{lem:N47'M47'}, \ref{lem:N21'M21'}, \ref{lem:N25'M25'} respectively.

For $i \in \{33, 34\}$ the statement was proved in Lemma~\ref{lem:N3940}.

For all the rest $i$ the statement follows from the above lemmas and Th.~\ref{th:epsNM} since the reflections $\eps^i \in G$ are symmetries of the exponential mapping, see Propos.~4.5~\cite{max_sre2}.
\end{proof}

\subsection{Reflections $\eps^k$ as permutations}

In addition to the index sets $I$, $C$, $J$, $R$, $X$ introduced in~\eq{ICJ}, \eq{RX}, we will need also the set
$$
T = \{(i,j) \in I \times I \mid i < j, \ M_i' = M_j' \}.
$$
From the definition of the sets $M_i'$ in Subsec.~\ref{subsec:decompM'} we obtain the explicit representation:
\begin{multline*}
T = \{(1, 6), \ (2,5), \ (3, 8), \ (4, 7), \ (9, 10),  \ (11,12), \ (13, 14), \ (15, 16), \\
  \ (17, 23), \ (18,22), \  (19,21), \ (20,24), \ (25, 27), \ (29, 31),   \ (33, 34)\}.
\end{multline*}

Notice that $X = \{ i \in I \mid \exists j \in I \ : \ (i,j) \in T \text{ or } (j,i)\in T\}$.

Now we show that reflections $\eps^k \in G$ permute elements in any pair $(i,j) \in T$.

We will need  multiplication Table~\ref{tab:G} in the group $G$, which follows from definitions of the reflections $\eps^k$ (Sec.~4~\cite{max_sre2}). The lower diagonal entries of the table are not filled since $G$ is Abelian.

\begin{table}[htbp]
$$
\begin{array}{|c|c|c|c|c|c|c|c|}
\hline
 & \eps^1 & \eps^2 & \eps^3 &  \eps^4 & \eps^5 & \eps^6 & \eps^7   \\
\hline
\eps^1   & \Id & \eps^3 & \eps^2 &  \eps^5 & \eps^4 & \eps^7 & \eps^6   \\
\hline
\eps^2   &   & \Id & \eps^1 &  \eps^6 & \eps^7 & \eps^4 & \eps^5   \\
\hline
\eps^3   &    &   & \Id  &  \eps^7 & \eps^6 & \eps^5 & \eps^4   \\
\hline
\eps^4   &    &   &   &  \Id & \eps^1 & \eps^2 & \eps^3   \\
\hline
\eps^5   &    &   &   &  & \Id  & \eps^3 & \eps^2   \\
\hline
\eps^6   &    &   &  &   &  & \Id  & \eps^1  \\
\hline
\eps^7   &    &   &   &    &   &   & \Id   \\
\hline
 \end{array}
$$
\caption{Multiplication table in the group $G$}\label{tab:G}
\end{table}

\begin{lemma}
\label{lem:epskT}
For any $(i,j) \in T$ there exists a reflection $\eps^k \in G$ such that the following diagram is commutative:
$$
\xymatrix{
N_i' \ar[r]^{\Exp} \ar[d]_{\eps^k} & M_i'  \ar[d]^{\Id}\\
N_j' \ar[r]^{\Exp} & M_j'
}
\qquad\qquad\qquad
\xymatrix{
(\lam,t) \ar@{|->}[r]^{\Exp} \ar@{|->}[d]_{\eps^k} & q_t  \ar@{|->}[d]^{\Id}\\
(\lam^k,t)  \ar@{|->}[r]^{\Exp} & q_t
}
$$
\end{lemma}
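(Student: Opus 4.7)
The plan is to verify the lemma pair-by-pair using the infrastructure already assembled. The underlying principle comes from Proposition~4.5~\cite{max_sre2}: each reflection $\eps^k \in G$ is a symmetry of the exponential mapping, so that its action $(\lam,t) \mapsto (\lam^k, t)$ on the preimage induces a well-defined action $\eps^k_M$ on the state manifold $M$ (described explicitly in Table~\ref{tab:eps17R12th}) satisfying $\Exp(\eps^k(\lam,t)) = \eps^k_M(\Exp(\lam,t))$. Commutativity of the required diagram with $\Id$ on the right column is thus equivalent to two purely combinatorial conditions: $\eps^k$ must send $N_i'$ onto $N_j'$, and $\eps^k_M$ must restrict to the identity on the common image $M_i' = M_j'$.

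For each of the fifteen pairs $(i,j) \in T$ I would execute a two-step check. First, read off the defining relations of $M_i' = M_j'$ from Table~\ref{tab:Mi'} and use Table~\ref{tab:eps17R12th} to list every $k$ such that $\eps^k_M$ acts trivially on $M_i'$. A reflection qualifies precisely when its prescribed transformations of $R_1$, $R_2$, $\th$ become vacuous on that locus: whenever $\th = \pi$ on $M_i'$ the substitution $\th \mapsto 2\pi - \th$ is automatic, so $\eps^4, \eps^5, \eps^6, \eps^7$ become candidates subject to matching the required action on $R_1, R_2$; similarly, $R_1 = 0$ or $R_2 = 0$ makes the corresponding sign flips trivial. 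Second, consult Tables~\ref{tab:eps145N123}--\ref{tab:eps17N2125} to confirm that at least one candidate actually sends $N_i'$ onto $N_j'$.

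As an illustration, for $(i,j) = (1,6)$ we have $M_1' = M_6' = \{\th = \pi,\ R_1 > 0,\ R_2 < 0\}$; the reflection fixing these constraints is $\eps^5$ (which sends $(R_1, R_2, \th) \mapsto (R_1, R_2, 2\pi - \th)$), and Table~\ref{tab:eps17N2125} confirms $\eps^5(N_1') = N_6'$. The pairs with $\th = \pi$, namely $(1,6), (2,5), (3,8), (4,7), (17,23), (18,22), (19,21), (20,24)$, are handled by a suitable element of $\{\eps^4, \eps^5, \eps^6, \eps^7\}$ chosen to match the signs of $R_1, R_2$; the pairs lying on $\{R_2 = 0\}$, namely $(9,10), (11,12), (13,14), (15,16), (25,27), (29,31)$, are handled by $\eps^2$, which fixes $R_1$ and $\th$ while sending $R_2 \mapsto -R_2 = 0$; and $(33, 34)$, consisting of the single point $\{\th = \pi,\ R_1 = R_2 = 0\}$, is fixed by any of $\eps^5, \eps^6, \eps^7$ on the image side, while the pairing $N_{33}' \leftrightarrow N_{34}'$ is read from the reflection action on the stratum $C_4$ between its components $C_4^0$ and $C_4^1$.

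The main obstacle is bookkeeping rather than substance: fifteen pairs must each be cross-referenced against three tables, and one must ensure that the chosen $\eps^k$ simultaneously satisfies both conditions. No new ideas beyond the symmetry property and the already established diffeomorphism data enter; the task is to verify that the decompositions of preimage and image were set up to be compatible with the $G$-action, which is exactly the content of the lemma.
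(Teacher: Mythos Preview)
Your proposal is correct and follows essentially the same approach as the paper: invoke Proposition~4.5 of \cite{max_sre2} to reduce the diagram to the two combinatorial checks (that $\eps^k$ sends $N_i'$ to $N_j'$ and that $\eps^k_M$ fixes $M_i' = M_j'$ pointwise), then read off the required $k$ for each pair from Tables~\ref{tab:Ni'}, \ref{tab:Mi'}, \ref{tab:eps17R12th}, and \ref{tab:eps145N123}--\ref{tab:eps17N2125}. The paper simply records the outcome of this bookkeeping --- $k=5$ for $(1,6),(2,5),(3,8),(4,7),(17,23),(19,21)$; $k=2$ for $(9,10),(11,12),(13,14),(15,16),(25,27),(29,31)$; $k=4$ for $(18,22),(20,24),(33,34)$ --- which is consistent with the cases you worked out.
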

\begin{proof}
From definitions of the sets $N_i'$ (Subsec.~\ref{subsec:decompN'}) and the reflections $\eps^k$ (Sec.~4~\cite{max_sre2}), Tables~\ref{tab:eps145N123}, \ref{tab:eps246N1755}, \ref{tab:eps123N747}, \ref{tab:eps17N2125}, \ref{tab:G} and Propos.~4.5~\cite{max_sre2}, we obtain the following indices $k$ of required symmetries $\eps^k$ for pairs $(i,j) \in T$:
\begin{align*}
&(i,j) \in \{(1,6), \ (2, 5), \ (3, 8), \ (4, 7), \ (17, 23), \ (19, 21)\} \then k = 5, \\
&(i,j) \in \{(9,10), \ (15,16), \ (11,12), \ (13,14),\ (25,27), \ (29,31)\} \then  k = 2, \\
&(i,j) \in \{(33,34), \ (18,22), \ (20, 24) \} \then  k = 4.
\end{align*}    
\end{proof}

\section{Solution to optimal control problem}
\label{sec:sol}
In this section we present the final results of this study of the sub-Riemannian problem on $\SE(2)$.

\subsection{Global structure of the exponential mapping}

We say that a mapping $\map{F}{X}{Y}$ is double if   any point $y \in Y$  has exactly two preimages:
$$
\forall \ y \in Y \qquad F^{-1}(y) = \{ x_1, x_2\}, \qquad x_1 \neq x_2.
$$

\begin{theorem}
\label{th:Expglob}
\begin{itemize}
\item[$(1)$]
There is the following decomposition of preimage of the exponential mapping $\map{\Exp}{\hN}{\hM}$:
\begin{align*}
&\hN = \tN \sqcup N', \\
&\tN = \sqcup_{i=1}^8 D_i, \\
&N' = \NMax \sqcup \Nconj \sqcup \Nrest, \\
&\NMax = \sqcup_{i\in X} N_i', \\
&\Nconj = \sqcup_{i\in J} N_i', \\
&\Nrest = \sqcup_{i\in R} N_i ', 
\intertext{and in the image of the exponential mapping:}
&\hM = \tM \sqcup M', \\
&\tM = \sqcup_{i=1}^8 M_i, \\
&M' = \MMax \sqcup \Mconj \sqcup \Mrest, \\
&\MMax = \cup_{i\in X} M_i', \\
&M_i' \cap M_j' \neq \emptyset, \ i < j \then (i,j) \in T, \quad \{i,j\} \subset X,\\
&(i,j) \in T \then M_i' = M_j', \\
&\Mconj = \sqcup_{i\in J} M_i', \\
&\Mrest = \sqcup_{i \in R} M_i'.  
\end{align*}
\item[$(2)$]
In terms of these decompositions the exponential mapping $\map{\Exp}{\hN}{\hM}$ has the following structure:
\begin{align}
&\map{\Exp}{D_i}{M_i} \text{ is a diffeomorphism $\forall \ i = 1, \dots, 8$,} \label{ExpDiMi}\\
&\map{\Exp}{N_i'}{M_i'} \text{ is a diffeomorphism $\forall \ i \in I$.} \label{ExpNi'Mi'}
\end{align}
Thus
\begin{align}
&\map{\Exp}{\tN}{\tM}  \text{ is a bijection,} \label{ExpNMtilde}\\
&\map{\Exp}{\NMax}{\MMax}  \text{ is a double mapping,} \label{ExpNMMax}\\
&\map{\Exp}{\Nconj}{\Mconj}  \text{ is a bijection,} \label{ExpNMconj}\\
&\map{\Exp}{\Nrest}{\Mrest}  \text{ is a bijection.} \label{ExpNMrest}
\end{align}
\item[$(3)$]
Any point $q \in \tM$ ($q \in \Mconj$, $q \in \Mrest$) has a unique preimage $\nu = \Exp^{-1}(q)$ for the mapping $\map{\restr{\Exp}{\hN}}{\hN}{\hM}$. Moreover, $\nu \in \tN$ (resp., $\nu \in \Nconj$, $\nu \in \Nrest$). 
\item[$(4)$]
Any point $q \in \MMax$   has exactly two preimages $\{\nu', \nu''\} = \Exp^{-1}(q)$ for the mapping $\map{\restr{\Exp}{\hN}}{\hN}{\hM}$. Moreover, $\nu', \nu'' \in \NMax$   and $\nu'' = \eps^k(\nu')$ for some $\eps^k \in G$.
\end{itemize}
\end{theorem}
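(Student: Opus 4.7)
The strategy is to assemble the theorem by stitching together the diffeomorphism results on the open strata (Theorem~\ref{th:ExpDidiffeo}), on the boundary strata (Theorem~\ref{th:ExpNi'Mi'}), the decomposition of $N'$ (Lemma~\ref{lem:decompN'}), the inclusion $\Exp(N') \subset M'$ (Lemma~\ref{lem:ExpN'}), and the permutation lemma for reflections (Lemma~\ref{lem:epskT}). No new analytic work should be required; the content of the theorem is bookkeeping plus the already-proved diffeomorphism and symmetry facts.

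First I would verify part~(1). The decomposition $\hN=\tN\sqcup N'$ and $\tN=\sqcup_{i=1}^8 D_i$ was established in Subsec.~4.1.1; the three-piece partition $N'=\NMax\sqcup\Nconj\sqcup\Nrest$ is Eq.~\eq{NhatNconj}, and the subdivisions $\NMax=\sqcup_{i\in X}N_i'$, $\Nconj=\sqcup_{i\in J}N_i'$, $\Nrest=\sqcup_{i\in R}N_i'$ are Lemma~\ref{lem:decompN'}. For the image side, the decomposition $\hM=\tM\sqcup M'$ and $\tM=\sqcup_{i=1}^8 M_i$ was set up in Subsec.~4.1.2. The identity $\MMax=\cup_{i\in X}M_i'$, the coincidence rule $(i,j)\in T\Rightarrow M_i'=M_j'$, and the disjointness statements $M_i'\cap M_j'\ne\emptyset$, $i<j\Rightarrow (i,j)\in T,\ \{i,j\}\subset X$, all follow by inspection of the explicit Table~\ref{tab:Mi'} together with the definition of $T$ given just before the theorem statement; the disjointness of $\Mconj$ from $\MMax$ and $\Mrest$ is immediate from the incompatible defining conditions on $(\th,R_1,R_2)$ in Table~\ref{tab:Mi'}.

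Next I would treat part~(2). Equation~\eq{ExpDiMi} is Theorem~\ref{th:ExpDidiffeo} and Eq.~\eq{ExpNi'Mi'} is Theorem~\ref{th:ExpNi'Mi'}. The bijection~\eq{ExpNMtilde} is Corollary~\ref{cor:ExpNtilde}. For \eq{ExpNMconj} and~\eq{ExpNMrest}: since $j\notin X$ for $j\in J\cup R$, no two distinct sets $N_i',N_j'$ with $i\in J\cup R$ have the same image, so by Lemma~\ref{lem:decompN'} and Theorem~\ref{th:ExpNi'Mi'} the restrictions are bijections onto $\Mconj$ and $\Mrest$ respectively. The double-mapping claim~\eq{ExpNMMax} is the subtle point: I would argue that for $i\in X$, the index $i$ is paired with a unique $j\ne i$ such that $M_i'=M_j'$ (this is visible from Table~\ref{tab:Mi'} and encoded in the pairing $T$), and by Lemma~\ref{lem:epskT} there is a reflection $\eps^k\in G$ with $\eps^k(N_i')=N_j'$ and $\Exp\circ\eps^k=\Exp$ on $N_i'$. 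Composed with Theorem~\ref{th:ExpNi'Mi'}, this gives exactly two preimages in $\NMax$ for each $q\in\MMax$, related by $\eps^k$.

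Finally I would establish parts~(3) and~(4) by ruling out stray preimages coming from other pieces of $\hN$. Lemma~\ref{lem:ExpN'} gives $\Exp(N')\subset M'$, so no point of $\tM$ can have a preimage in $N'$; combined with~\eq{ExpNMtilde}, this gives~(3) for $q\in\tM$. For $q\in\Mconj\cup\Mrest$, the same lemma shows preimages lie in $N'$, and the disjointness of $\MMax$, $\Mconj$, $\Mrest$ combined with \eq{ExpNMconj} and \eq{ExpNMrest} forces uniqueness in the appropriate stratum. For $q\in\MMax$, the same argument localizes preimages to $N'$, the disjointness of the strata of $M'$ forces them to lie in $\NMax$, and the previous paragraph supplies exactly two, related by a reflection, giving~(4). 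The only step requiring genuine care is verifying that $T$ and the explicit actions of $\eps^k$ from Tables~\ref{tab:eps145N123}--\ref{tab:eps17N2125} account for \emph{all} coincidences $M_i'=M_j'$; this is a finite check against Table~\ref{tab:Mi'} and will be the main obstacle, though a routine one.
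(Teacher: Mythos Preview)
Your proposal is correct and follows essentially the same approach as the paper's own proof, which is a terse assembly of Theorems~\ref{th:ExpDidiffeo} and~\ref{th:ExpNi'Mi'}, Lemma~\ref{lem:decompN'}, Lemma~\ref{lem:ExpN'}, and Lemma~\ref{lem:epskT}; you are simply more explicit about the bookkeeping. One small logical slip: in part~(3) for $q\in\Mconj\cup\Mrest$ you write ``the same lemma shows preimages lie in $N'$,'' but Lemma~\ref{lem:ExpN'} only gives $\Exp(N')\subset M'$; to exclude preimages in $\tN$ you need $\Exp(\tN)\subset\tM$, which is Lemma~\ref{lem:ExpDiMi} (or Theorem~\ref{th:ExpDidiffeo}), already at hand.
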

\begin{proof}
Equalities in item (1) follow immediately from definitions of the corresponding decompositions.

(2) Property~\eq{ExpDiMi} was proved in Th.~\ref{th:ExpDidiffeo}, and property~\eq{ExpNMtilde} is its corollary, with account of item (1).
Property~\eq{ExpNi'Mi'} was proved in Th.~\ref{ExpNi'Mi'}, and properties~\eq{ExpNMMax}--\eq{ExpNMrest} are its corollaries, with account of item (1).

(3) The statement follows from~\eq{ExpNMtilde}, \eq{ExpNMMax}, \eq{ExpNMconj}, \eq{ExpNMrest}, with account of item (1).

(4) The statement follows from \eq{ExpNMMax}, \eq{ExpNMconj}, \eq{ExpNMrest}, and Lemma~\ref{lem:epskT}.
\end{proof}

\subsection{Optimal synthesis}

\begin{theorem}
\label{th:opt_synth}
Let $q \in \hM$.
\begin{itemize}
\item[$(1)$]
Let $q \in \tM \cup \Mconj \cup \Mrest = \hM \setminus \MMax$. Denote $\nu = (\lam,t) = \Exp^{-1}(q) \in \tN \cup \Nconj \cup \Nrest = \hN \setminus \NMax$. Then $q_s = \Exp(\lam,s)$, $s \in [0,t]$, is the unique optimal trajectory connecting $q_0$ with $q$. If $q \in \tM \cup \Mrest$, then $t < \tt(\lam)$; if $q \in \Mconj$, then $t = \tt(\lam) = \tconj(\lam)$.
\item[$(2)$]
Let $q \in \MMax$. Denote $\{\nu', \nu''\} = \Exp^{-1}(q) \subset \NMax$, $\nu' = (\lam',t) \neq \nu'' = (\lam'',t)$. Then there exist exactly two distinct optimal trajectories connecting $q_0$ and $q$; namely, $q_s' = \Exp(\lam',s)$ and $q_s'' = \Exp(\lam'',s)$, $s \in [0,t]$. Moreover, $t = \tt(\lam)< \tconj(\lam)$. 
\item[$(3)$]
An optimal trajectory $q_s = \Exp(\lam, s)$ is generated by the optimal controls 
$$
u_i(s) = h_i(\lam_s), \qquad \lam_s= e^{s \vh}(\lam), \qquad i = 1,2.
$$ 
Thus
$$
u_1(s) = \sin(\g_s/2), \qquad u_2(s) = -\cos(\g_s/2),
$$
where $\g_s$ is the solution to the equation of pendulum $\ddot{\g}_s = - \sin \g_s$ with the initial condition $(\g_0, \dot \g_0) = \lam$.
\end{itemize}
\end{theorem}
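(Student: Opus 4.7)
\medskip
\noindent\textbf{Proof proposal.} The plan is to reduce everything to the stratified diffeomorphism picture already established in Theorem~\ref{th:Expglob}, using only two additional standard ingredients: existence of sub-Riemannian minimizers (so optimal trajectories exist between $q_0$ and any $q\in\hM$), and the fact that every minimizer is a normal extremal trajectory in this problem (there are no strictly abnormal minimizers, since $\SE(2)$ is a contact distribution), hence is of the form $q_s=\Exp(\lam,s)$ for some $(\lam,t)\in N$. Combined with the cut-time identity $\tcut(\lam)=\tt(\lam)$ (Theorem~\ref{th:tcutC}), any optimal $(\lam,t)$ satisfies $t\leq\tt(\lam)$, so it lies in $\hN$. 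This reduces the problem to counting preimages of $q$ under $\restr{\Exp}{\hN}$, which is exactly what Theorem~\ref{th:Expglob} describes.

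First I would dispose of part~(1). If $q\in\hM\setminus\MMax$, Theorem~\ref{th:Expglob}(3) gives a unique $\nu=(\lam,t)\in\hN$ with $\Exp(\nu)=q$, and by the preceding paragraph this $\nu$ must coincide with any optimal lift; hence the optimal trajectory is unique. The comparison of $t$ with $\tt(\lam)$ is then read off from the definition of the strata: $\tN=\{(\lam,t)\in\cup_{i=1}^3 N_i\mid t<\tt(\lam),\,\ts\tc\neq 0\}$ gives $t<\tt(\lam)$ for $q\in\tM$, and $\Nrest=N'\setminus\Ncut$ gives $t<\tt(\lam)$ for $q\in\Mrest$. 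In the conjugate case $q\in\Mconj$ we have $\nu\in\Nconj\subset N_2$, so $t=\tt(\lam)=2kp_1^1(k)$ with $\ts=0$, and Proposition~\ref{propos:conjC2ts=0}(1) yields $\tconj(\lam)=2kp_1^1=\tt(\lam)=t$.

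For part~(2), let $q\in\MMax$ and $\{\nu',\nu''\}=\Exp^{-1}(q)\subset\NMax$ by Theorem~\ref{th:Expglob}(4), with $\nu''=\eps^k(\nu')$ for some reflection (so $\lam'\ne\lam''$ but the two pieces have equal sub-Riemannian length $t=\tt(\lam')=\tt(\lam'')$). Both $q_s'=\Exp(\lam',s)$ and $q_s''=\Exp(\lam'',s)$ are extremal trajectories reaching $q$ at the same time $t$; since any optimal lift of $q$ sits in $\hN$ and $q$ has only these two preimages there, no other optimal trajectory exists. It remains to verify that both are actually optimal. For this I would use that $\tcut(\lam)=\tt(\lam)=t$ together with the standard closedness property of optimality: the set $\{t_1\geq 0\mid q_s'\text{ is optimal for }s\in[0,t_1]\}$ is closed, so its supremum $\tcut(\lam')=t$ is attained. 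The strict inequality $t<\tconj(\lam)$ follows from Theorem~\ref{th:conjC}, which gives $\tconj(\lam)\geq\tt(\lam)=t$, combined with the fact that $\NMax\cap\Nconj=\emptyset$ by construction; since equality $\tconj=\tt$ occurs only in $C_2$ with $\ts=0$ (Proposition~\ref{propos:conjC2ts=0}), i.e.\ precisely inside $\Nconj$, we conclude $t<\tconj(\lam)$ on $\NMax$.

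Part~(3) is a direct consequence of the Pontryagin Maximum Principle applied to the problem~\eq{sys1}--\eq{J}: the maximized normal Hamiltonian yields $u_i(s)=h_i(\lam_s)$ along the extremal $\lam_s=e^{s\vh}(\lam)$, and the explicit expressions $h_1=\sin(\g/2)$, $h_2=-\cos(\g/2)$ in terms of the pendulum coordinate $\g$ come from the Hamiltonian system~\eq{ham_vert}--\eq{ham_hor} derived in~\cite{max_sre2}. The main conceptual obstacle here is not computational but bookkeeping: one must check that the abstract alternative ``unique preimage vs.\ exactly two preimages related by a reflection'' matches the analytic dichotomy ``unique optimal trajectory vs.\ exactly two optimal trajectories of equal length''. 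That matching is already accomplished by Theorem~\ref{th:Expglob} together with Lemma~\ref{lem:epskT}, so the proof reduces to the stratum-by-stratum verification sketched above.
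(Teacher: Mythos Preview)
Your proposal is correct and follows essentially the same route as the paper: reduce to the preimage count of Theorem~\ref{th:Expglob}, then read off the comparison of $t$ with $\tt(\lam)$ and $\tconj(\lam)$ from the definitions of the strata $\tN$, $\Nrest$, $\Nconj$, $\NMax$. Your argument is in fact slightly more explicit than the paper's in two places: you spell out the closedness-of-optimality step needed to conclude that both preimages in~(2) give genuinely optimal trajectories, and you invoke Proposition~\ref{propos:conjC2ts=0} directly to justify $t=\tconj(\lam)$ on $\Nconj$ and the strict inequality $t<\tconj(\lam)$ on $\NMax$, where the paper simply appeals to ``definitions''.
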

\begin{proof}
For any point $q \in \hM$ there exists an optimal trajectory $q_s = \Exp(\lam,s)$, $s \in [0,t]$, $\nu = (\lam,t) \in N$, such that $q_t = q$ and $t \leq \tcut(\lam)$. By Th.~5.4~\cite{max_sre2}, we have $t \leq \tt(\lam)$, thus $\nu \in \hN$.

(1) If $q \in \tM \sqcup \Mconj \sqcup \Mrest$, then by Th.~\ref{th:Expglob}, there exists a unique $\nu = (\lam,t) \in \hN$ such that $q = \Exp(\nu)$, moreover, $\nu \in \tN \sqcup \Nconj \sqcup \Nrest$. Consequently, $q_s = \Exp(\lam,s)$, $s \in [0,t]$, is a unique optimal trajectory connecting $q_0$ with $q$.

The inequality $t < \tt(\lam)$ for $\nu = (\lam,t) \in \tN \sqcup \Nrest$, and the equality $t = \tt(\lam) = \tconj(\lam)$ for $\nu \in \Nconj$ follow  from definitions of the sets $\tN$, $\Nrest$, $\Nconj$.

(2) 
If $q \in \MMax$, then the statement follows similarly to item (1) from Th.~\ref{th:Expglob} and definition of the set $\NMax$.

(3) 
The expressions for optimal controls were obtained in Sec.~2~\cite{max_sre2}.
\end{proof}

It follows from the definition of cut time that for any $\lam \in C$ and $t \in (0, \tcut(\lam))$, the trajectory $q(s) = \Exp(\lam,s)$ is optimal at the segment $s \in [0, t]$. For the case of finite $\tcut(\lam)$, we obtain a similar statement for $t = \tcut(\lam)$.

\begin{theorem}
\label{th:tcut}
If $\tcut(\lam) < + \infty$, then the extremal trajectory $\Exp(\lam,s)$ is optimal for $s \in [0, \tcut(\lam)]$.
\end{theorem}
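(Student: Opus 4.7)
The plan is to promote the optimality on every closed subinterval $[0,s]$ with $s<\tcut(\lam)$ to optimality on the full closed interval $[0,\tcut(\lam)]$, by a limiting argument based on the continuity of the sub-Riemannian distance.

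First I set $T=\tcut(\lam)<+\infty$ and $q_s=\Exp(\lam,s)$. By Theorem~\ref{th:opt_synth}(3), the extremal controls satisfy $u_1^2+u_2^2=\sin^2(\g_s/2)+\cos^2(\g_s/2)=1$, so $q_s$ is parametrized by sub-Riemannian arclength; in particular the length of the restriction $q|_{[0,s]}$ equals $s$ for every $s\geq 0$.

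Next, by the very definition
$$\tcut(\lam)=\sup\{\,t_1>0\mid q_s\text{ is optimal for }s\in[0,t_1]\,\},$$
for any $s\in[0,T)$ one can pick $t_1\in(s,T)$ such that $q|_{[0,t_1]}$ is optimal, whence $q|_{[0,s]}$ is also optimal. Combined with the previous step this gives $d(q_0,q_s)=s$ for every $s\in[0,T)$, where $d$ denotes the sub-Riemannian distance on $\SE(2)$.

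Finally, I invoke the continuity of $d(q_0,\cdot)$ on $M$, which holds since the generating vector fields $\pder{}{x}\cos\th+\pder{}{y}\sin\th$ and $\pder{}{\th}$ satisfy H\"ormander's condition on the complete manifold $\SE(2)$. Passing to the limit $s\to T-0$ along the continuous curve $s\mapsto q_s$ yields
$$d(q_0,q_T)=\lim_{s\to T-0}d(q_0,q_s)=\lim_{s\to T-0}s=T.$$
Since the length of $q|_{[0,T]}$ equals $T=d(q_0,q_T)$, this trajectory realizes the distance and is therefore optimal, proving the theorem.

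I do not anticipate a genuine obstacle: the only nontrivial ingredient is the continuity of the sub-Riemannian distance, which is standard for bracket-generating left-invariant distributions on Lie groups and has already been used implicitly throughout Sections~\ref{sec:cut} and~\ref{sec:sol} (e.g.\ in the existence of optimal trajectories to every point of $\hM$ that underlies Theorem~\ref{th:opt_synth}). The argument is otherwise purely formal, reducing to the trivial observation that the supremum in the definition of $\tcut$ is in fact attained whenever it is finite.
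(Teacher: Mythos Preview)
Your argument is correct, and it is a genuinely different route from the paper's. The paper proves this by invoking the full structural description of the exponential mapping: since $\tcut(\lam)=\tt(\lam)<+\infty$ forces $\lam\in C_1\cup C_2\cup C_4$, the pair $(\lam,\tt(\lam))$ lies in $\Ncut=\NMax\sqcup\Nconj$, and then Theorem~\ref{th:opt_synth} (items (1) and (2)) guarantees that the corresponding trajectory is optimal up to that instant. In other words, the paper derives the result as a corollary of the hard classification work of Sections~\ref{sec:cut}--\ref{sec:bound}.

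Your proof, by contrast, is a soft topological argument that uses nothing specific to $\SE(2)$: arclength parametrization gives $d(q_0,q_s)=s$ for $s<T$, and continuity of the Carnot--Carath\'eodory distance (a consequence of the bracket-generating condition) pushes this to $s=T$. This is the standard proof that the cut time is attained in any complete sub-Riemannian manifold, and it is both shorter and more general. What the paper's approach buys is nothing extra for \emph{this} theorem, but it is the natural by-product of the machinery already in place; your approach buys independence from that machinery, at the modest cost of citing continuity of $d$ as an external fact.
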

\begin{proof}
Let $\tcut(\lam) = \tt(\lam) < + \infty$, i.e., $\lam \in C_1 \cup C_2 \cup C_4$, and let $t = \tcut(\lam)$. Then $(\lam,t) \in \NMax$, and the statement follows from item (2) of Th.~\ref{th:opt_synth}.
\end{proof}

\subsection{Cut locus}

Now we are able to describe globally the first Maxwell set
\begin{align*}
&\Max = \{ q \in M \mid \exists t > 0, \ \exists \text{ optimal trajectories }
q_s \not\equiv q_s', \ s \in [0,t],\\ 
&\hspace{8cm}
 \text{ such that } q_t = q_t' = q \},\\
\intertext{the cut locus}
&\Cut  = \{ \Exp(\lam,t) \mid \lam \in C, \ t = \tcut(\lam) \},  
\intertext{and its intersection with caustic (the frst conjugate locus)}
&\Conj = \{ \Exp(\lam,t) \mid \lam \in C, \ t = \tconj(\lam) \}.
 \end{align*}

\begin{theorem}
\label{th:MMaxConj}
\begin{itemize}
\item[$(1)$]
$\Max = \MMax$,
\item[$(2)$]
$\Cut = \Mcut$,
\item[$(3)$]
$\Cut \cap \Conj = \Mconj$.
\end{itemize}
\end{theorem}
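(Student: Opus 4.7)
The plan is to derive all three equalities directly from the global structure of the exponential mapping in Theorem~\ref{th:Expglob}, together with the identification $\tcut(\lam) = \tt(\lam)$ from Theorem~\ref{th:tcutC} and the coincidence $\tconj(\lam) = \tt(\lam)$ on $\Nconj$ from Proposition~\ref{propos:conjC2ts=0}.

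For (1), the inclusion $\MMax \subseteq \Max$ is immediate from Theorem~\ref{th:Expglob}(4) combined with Theorem~\ref{th:opt_synth}(2): each $q \in \MMax$ admits two preimages $\nu', \nu'' \in \NMax$ which produce two distinct optimal trajectories of common length $t = \tt(\lam') = \tt(\lam'')$. Conversely, if $q \in \Max$ then two distinct optimal trajectories to $q$ provide two distinct preimages in $\hN$ (optimality forces $t \leq \tcut(\lam) = \tt(\lam)$), and the uniqueness clause of Theorem~\ref{th:Expglob}(3) excludes $q \in \tM \cup \Mconj \cup \Mrest$, so $q \in \MMax$.

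For (2), Theorem~\ref{th:tcutC} identifies $\Cut$ with $\Exp(\Ncut)$, and the decomposition $\Ncut = \NMax \sqcup \Nconj$ together with the equalities $\Exp(\NMax) = \MMax$, $\Exp(\Nconj) = \Mconj$ from Theorem~\ref{th:Expglob}(2) gives $\Cut = \MMax \cup \Mconj = \Mcut$.

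For (3), the inclusion $\Mconj \subseteq \Cut$ follows from~(2), and $\Mconj \subseteq \Conj$ holds because on each $N_i'$ with $i \in J$ the defining time $t = 2 k p_1^1(k) = \tt(\lam)$ coincides with $\tconj(\lam)$ by Proposition~\ref{propos:conjC2ts=0}(1). For the reverse inclusion, let $q \in \Cut \cap \Conj$; by~(2), $q \in \MMax \sqcup \Mconj$, so it suffices to exclude $q \in \MMax$. Writing $q = \Exp(\lam_*, \tconj(\lam_*))$, finiteness of the conjugate time together with Theorem~\ref{th:conjC}(1) forces $\lam_* \in C_2$. If $\tconj(\lam_*) \leq \tt(\lam_*)$, combining with Theorem~\ref{th:conjC}(3) gives $\tconj(\lam_*) = \tt(\lam_*)$, so Proposition~\ref{propos:conjC2ts=0} places $(\lam_*, \tconj(\lam_*))$ in $\Nconj$, hence $q \in \Mconj$, contradicting $q \in \MMax$ by disjointness of the strata. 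The remaining case $\tconj(\lam_*) > \tt(\lam_*)$, in which the preimage sits outside $\hN$, has to be ruled out directly from the explicit coordinate formulas~(5.7)--(5.12) of~\cite{max_sre2} for $C_2$ trajectories together with the stratification of $M'$ from Subsection~\ref{subsec:decompM'}, verifying that the image point falls on none of the two-dimensional Maxwell strata $M_i'$, $i \in X$.

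The main obstacle is precisely this last subcase of part~(3): ruling out the possibility that a non-optimal continuation of a $C_2$ geodesic past its cut time, evaluated at its first conjugate instant, lands in the Maxwell stratum $\MMax$. This is not a formal consequence of Theorem~\ref{th:Expglob} and requires the explicit surface geometry of the strata rather than the abstract diffeomorphism structure; parts~(1),~(2) and the easy halves of~(3) are immediate corollaries of the global decomposition already established.
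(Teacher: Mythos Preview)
Your arguments for (1), (2), and the inclusion $\Mconj \subset \Cut \cap \Conj$ coincide with the paper's: they are straightforward consequences of Theorems~\ref{th:Expglob}, \ref{th:tcutC}, \ref{th:opt_synth} and Proposition~\ref{propos:conjC2ts=0}.

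For the reverse inclusion in (3) the paper proceeds differently from you. Rather than starting from a conjugate preimage $\lam_*$, the paper fixes $q \in \Mcut$, picks a preimage $(\lam,t) \in \Ncut = \NMax \sqcup \Nconj$, assumes $(\lam,t) \in \NMax$, and shows directly that $t = \tt(\lam) < \tconj(\lam)$: for $\lam \in C_1 \cup C_4$ this is Theorem~\ref{th:conjC}(1), and for $\lam \in C_2$ the condition $(\lam,t)\in\NMax$ forces $\ts\neq 0$, whence Proposition~\ref{propos:conjC2ts=0}(2) gives the strict inequality. From this the paper concludes $q \notin \Conj$, and the inclusion follows.

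The obstacle you single out --- that a point of $\MMax$ could in principle lie on $\Conj$ via some \emph{non-optimal} preimage $(\lam_*,\tconj(\lam_*))$ with $\tconj(\lam_*) > \tt(\lam_*)$, hence outside $\hN$ --- is a legitimate logical point, but the paper's proof does not address it at all: it passes from ``$t<\tconj(\lam)$ for the optimal preimage'' to ``$q\notin\Conj$'' without further comment. So the extra coordinate verification you propose is not what the paper does; the paper simply does not consider preimages beyond $\hN$. Your version is more scrupulous than the published proof on exactly this point, while the parts you declare ``immediate'' are indeed handled in the paper just as you outline.
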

\begin{proof}
Items (1), (2) follow from Theorem~\ref{th:opt_synth} and Corollary~\ref{th:tcut}.

(3) Let $q \in \Mconj$, and let $q_s = \Exp(\lam,s)$, $s \in [0, \tt(\lam)$, be the optimal trajectory connecting $q_0$ with $q$. Thus there are no conjugate points at the interval $(0, \tt(\lam))$. We show that $\tt(\lam)$ is a conjugate time. 

Since $q = \Exp(\lam, \tt(\lam)) \in \Mconj$, then $(\lam,\tt(\lam)) \in \Nconj$, thus $\lam \in C_2$ and $\ts = 0$, $p = p_1^1(k)$, $(k \in (0,1)$. Then Propos.~\ref{propos:conjC2ts=0} states that $\tt(\lam) = 2 k p_1^1$ is a conjugate time.

We proved that $\tconj(\lam) = \tt(\lam)$. Thus $\Mconj \subset \Conj$, and in view of item (2) of this theorem we get $\Mconj \subset \Cut \cap \Conj$.
Now we prove that $\Cut \cap \Conj \subset \Mconj  $, i.e., $\Mcut \cap \Conj \subset \Mconj$. 

Fix any point $q \in \Mcut$. Then $q = \Exp(\lam,t)$ for some $(\lam,t) \in \Ncut = \NMax \sqcup \Nconj$. In order to complete the proof, we assume that $(\lam,t) \in \NMax$ and show that $q \notin \Conj$. Since $(\lam,t) \in \NMax$, then $t = \tt(\lam)$. We prove that $t < \tconj(\lam)$. 

If $\lam \in C_1 \cup C_4$, then $\tconj (\lam) = + \infty$ by Th.~\ref{th:conjC}.

Let $\lam \in C_2$. If $\ts = 0$, then $(\lam,t) \in \Nconj$, which is impossible since $(\lam,t) \in \NMax$. And if $\ts \neq 0$, then $t < \tconj(\lam)$ by Propos.~\ref{propos:conjC2ts=0}. 

The inclusion $\Cut \cap \Conj \subset \Mconj$ follows.
\end{proof}

\begin{theorem}
\label{th:cut_glob}
The cut locus has 3 connected components:
\begin{align}
&\Cut = \Cut_{\glob} \sqcup \Cut_{\loc}^+ \sqcup \Cut_{\loc}^-, \label{Cutglobloc} \\
&\Cut_{\glob} = \{ q \in M \mid \th = \pi\}, \label{Cutglob=}\\
&\Cut_{\loc}^+ = \{ q \in \hM \mid \th \in (- \pi, \pi), \ R_2 = 0, \ R_1 > R_1^1(|\th|\}, \label{Cutloc+=}\\
&\Cut_{\loc}^- = \{ q \in \hM \mid \th \in (- \pi, \pi), \ R_2 = 0, \ R_1 < -  R_1^1(|\th|\}, \label{Cutloc-=}
\end{align}
where the function $R_1^1$ is defined by Eq.~\eq{R11}.
The initial point $q_0$ is contained in the closure of the components $\Cut_{\loc}^+$, $\Cut_{\loc}^-$, and is separated from the component $\Cut_{\glob}$.
\end{theorem}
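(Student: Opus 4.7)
The plan is to combine Theorem~\ref{th:MMaxConj} with the explicit decomposition of $\Mcut$ from Theorem~\ref{th:Expglob}, then group the 34 pieces $M_i'$ ($i \in C$) into three classes determined by their $\th$-coordinate. By Theorem~\ref{th:MMaxConj}, $\Cut = \Mcut = \MMax \sqcup \Mconj$, and by Theorem~\ref{th:Expglob}(1) this equals the (non-disjoint) union $\cup_{i \in C} M_i'$ with $C = \{1,\dots,34\}$. I would thus read off Table~\ref{tab:Mi'} and separate the strata into (a) those with $\th = \pi$, namely $M_1',\dots,M_4'$, $M_{17}',\dots,M_{20}'$, $M_{33}'$ (together with their equal partners $M_5',\dots,M_8'$, $M_{21}',\dots,M_{24}'$, $M_{34}'$), and (b) those with $R_2 = 0$ and $\th \ne \pi$, namely $M_9', M_{11}', M_{13}', M_{15}', M_{25}', M_{26}', M_{28}', M_{29}', M_{30}', M_{32}'$ (and again their equal partners).

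Next I would verify that the union of the strata in group (a) gives exactly the plane $\{q \in M \mid \th = \pi\}$: the four $M_i'$ with both $R_1, R_2 \ne 0$ cover the four open quadrants, the strata $M_{17}',\dots,M_{20}'$ fill the half-axes with one of $R_1,R_2$ vanishing, and $M_{33}'$ supplies the origin $R_1 = R_2 = 0$. This yields $\Cut_{\glob}$ as in~\eq{Cutglob=}. For group (b), after identifying $\th \in (\pi, 2\pi)$ with $\th - 2\pi \in (-\pi, 0)$ in $S^1$, the strata $M_{13}' \cup M_{30}'$ and $M_9' \cup M_{26}'$ merge along the curves $R_1 = R_1^1(|\th|)$ (continuity provided by Lemma~\ref{lem:Gam1}(5)) and, glued at $\th = 0$ with $M_{29}'$ via the value $R_1^1(0) = 0$ from Lemma~\ref{lem:Gam1}(6), give the set $\Cut_{\loc}^+$ of~\eq{Cutloc+=}; the mirror argument with $M_{11}' \cup M_{28}', M_{15}' \cup M_{32}', M_{25}'$ gives $\Cut_{\loc}^-$ in~\eq{Cutloc-=}.

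For the topological statement I would argue as follows. The three sets are pairwise disjoint: $\Cut_{\glob}$ has $\th = \pi$ while $\Cut_{\loc}^\pm$ have $\th \in (-\pi,\pi)$, and $\Cut_{\loc}^+$, $\Cut_{\loc}^-$ are separated by the sign of $R_1$ (strictly nonzero on each, since $R_1^1(|\th|) > 0$ for $\th \ne 0$ and $q_0$ is excluded from $\hM$). Connectedness of $\Cut_{\glob}$ is immediate; connectedness of $\Cut_{\loc}^\pm$ follows from continuity of $R_1^1$ on $[0,\pi]$ (Lemma~\ref{lem:Gam1}(2),(5)) together with the fact that the defining condition describes an epigraph above a continuous curve through the origin. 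The asymptotics $R_1^1(\th) = \tfrac{\sqrt[3]{\pi}}{2}\th^{2/3} + o(\th^{2/3})$ as $\th \to +0$ (Lemma~\ref{lem:Gam1}(6)) give $q_0 \in \overline{\Cut_{\loc}^+} \cap \overline{\Cut_{\loc}^-}$, while the open slab $\{|\th| < \pi/2\}$ separates $q_0$ from $\Cut_{\glob}$.

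The main obstacle is purely combinatorial: carefully matching each of the 34 strata $M_i'$ in Table~\ref{tab:Mi'} to one of the three components of the desired decomposition, in particular sorting out the coincidences $(i,j) \in T$ and the lower-dimensional strata where one or both of $R_1, R_2$ vanish (which glue together the higher-dimensional pieces). No new analytical input is needed beyond the properties of $R_1^1$ already established in Lemma~\ref{lem:Gam1}.
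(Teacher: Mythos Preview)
Your overall strategy is exactly the paper's: use Theorems~\ref{th:MMaxConj} and~\ref{th:Expglob} to write $\Cut=\cup_{i\in C}M_i'$, then sort the $34$ strata by their $\th$-coordinate into the three groups $\Cut_{\glob}$, $\Cut_{\loc}^\pm$. The paper does precisely this, defining $C_{\glob}=\{1,\dots,8,17,\dots,24,33,34\}$, $C_{\loc}^+=\{13,\dots,16,29,\dots,32\}$, $C_{\loc}^-=\{9,\dots,12,25,\dots,28\}$.

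However, there is a concrete error in your bookkeeping for the local components. When you pass from the convention $\th\in(0,2\pi)$ (in which the $M_i'$ are defined) to $\th'=\th-2\pi\in(-\pi,0)$, the functions $R_1,R_2$ change sign: they are built from $\cos(\th/2),\sin(\th/2)$, and $\th/2$ shifts by $\pi$. (This is why the paper notes that $\{R_1=0\}$ and $\{R_2=0\}$ are Moebius strips.) Thus $M_9'=\{\th\in(\pi,2\pi),\,R_1>R_1^1(2\pi-\th)\}$ becomes, in the new coordinates, $\{\th'\in(-\pi,0),\,R_1'<-R_1^1(|\th'|)\}$, which lies in $\Cut_{\loc}^-$, not $\Cut_{\loc}^+$; likewise $M_{26}'$ goes to $\Cut_{\loc}^-$, while $M_{15}',M_{32}'$ go to $\Cut_{\loc}^+$. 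With your grouping ($M_9',M_{26}'$ in the ``$+$'' part and $M_{15}',M_{32}'$ in the ``$-$'' part), the resulting sets are not connected: $M_{13}'\cup M_{29}'$ and $M_9'$ meet only through the slice $\th=0$, but on that slice $M_9'$ approaches $R_1'<0$, not $R_1'>0$. Swapping $9,10,26\leftrightarrow 15,16,32$ (and symmetrically $11,12,28\leftrightarrow 13,14,30$ on the other side) fixes this and recovers the paper's index sets. The rest of your argument (connectedness, disjointness, closure at $q_0$) then goes through as you outlined.
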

\begin{proof}
By Theorems~\ref{th:MMaxConj}, \ref{th:Expglob} and Lemma~\ref{lem:decompN'}, we have
$$
\Cut = \Mcut = \Exp(\Ncut) = \cup_{i \in C} M_i'.
$$
Denote
\begin{align*}
&C_{\glob} = \{ 1, \dots, 8, 17, \dots, 24, 33, 34\}, \\
&C_{\loc}^+ = \{13, \dots, 16, 29, \dots, 32\}, \\
&C_{\loc}^- = \{ 9, \dots, 12, 25, \dots, 28\}.
\end{align*} 
Then
$$
\cup_{i \in C_{\glob}} M_i' = \Cut_{\glob}, \qquad
\cup_{i \in C_{\loc}^+} M_i' = \Cut_{\loc}^+, \qquad
\cup_{i \in C_{\loc}^-} M_i' = \Cut_{\loc}^-, 
$$ 
and decomposition~\eq{Cutglobloc}--\eq{Cutloc-=} follows.

The topological properties of  $\Cut_{\glob}$, $\Cut_{\loc}^\pm$ follow from equalities \eq{Cutglobloc}--\eq{Cutloc-=}.
\end{proof}

The curve $\G = \Cut \cap \Conj$ has the following asymptotics near the initial point $q_0$:
$$
R_1 = R_1^1(\th) = \sqrt[3]{\pi}/2 \, \th^{2/3} + o (\th^{2/3}), \quad \th \to 0, \qquad R_2 = 0,
$$
see item (5) of Lemma~\ref{lem:Gam1}. This agrees with the result on asymptotics of conjugate locus for contact sub-Riemannian structures in $\R^3$ obtained by A.~Agrachev~\cite{agrachev_contact}, and J.-P.~Gauthier et al~\cite{gauthier_contact}.

\twofiglabel
{cutC12n}{Cut point for $\lam \in C_1$, generic case (Optimal solutions for $\th_1 = \pi$)}{fig:cutC12n}
{cutC14n}{Cut point for $\lam \in C_1$, symmetric case with cusp (Optimal solutions for $x_1 \neq 0$, $y_1 = 0$, $\th_1 = \pi$)}{fig:cutC14n}

\onefiglabelrotate
{cutC15n}{Cut point for $\lam \in C_1$, symmetric case without cusp (Optimal solutions for $x_1 = 0$, $y_1 \neq 0$, $\th_1 = \pi$)}{fig:cutC15n}{0}

\twofiglabel
{cutC22n}{Cut point for $\lam \in C_2$, generic case}{fig:cutC22n}
{cutC21n}{Cut point for $\lam \in C_2$, special case with one cusp}{fig:cutC21n}

\twofiglabel
{cutC24n}{Cut point for $\lam \in C_2$ approaching conjugate point}{fig:cutC24n}
{cutC23n}{Cut point for $\lam \in C_2$ coinciding with conjugate point}{fig:cutC23n}

\onefiglabel
{cutC4n}{Cut point for $\lam \in C_4$ (Optimal solutions for $x_1 = y_1 = 0$, $\th_1 = \pi$)}{fig:cutC4n}

\subsection{Explicit optimal solutions for special terminal points}
In this subsection we describe optimal solutions for particular  terminal points $q_1 = (x_1, y_1, \th_1)$. Where applicable, we interpret the optimal trajectories in terms of the corresponding optimal motion of a car in the plane.

For generic terminal points, we developed a software in computer system Mathematica~\cite{math} for numerical evaluation of  solutions to the problem. 

\subsubsection{$x_1 \neq 0$, $y_1 = 0$, $\th_1 = 0$}
In this case $\nu \in N_5$, and the optimal trajectory is
$$
x_t = t \sgn x_1, \quad y_t = 0, \quad \theta_t = 0, \qquad t \in [0, t_1], \quad t_1 = |x_1|,
$$
and the car moves uniformly forward or backward along a segment.

\subsubsection{$x_1 = 0$, $y_1 = 0$, $|\th_1| \in ( 0, \pi)$}
We have $\nu \in N_4$,  and the optimal solution is 
$$
x_t = 0, \quad y_t = 0, \quad \theta_t = t \sgn \th_1, \qquad t \in [0, t_1], \quad t_1 = |\th_1|,
$$
the car rotates uniformly around itself by the angle $\th_1$. 

\subsubsection{$x_1 = 0$, $y_1 = 0$, $\th_1 =  \pi$}
We have $\nu \in N_4$,  and there are two  optimal solutions: 
$$
x_t = 0, \quad y_t = 0, \quad \theta_t = \pm t, \qquad t \in [0, t_1], \quad t_1 = \pi,
$$
the car rotates uniformly around itself clockwise or counterclockwise by the angle $\pi$, see Fig.~\ref{fig:cutC4n}. 

\subsubsection{$x_1 \neq 0$, $y_1 = 0$, $\th_1 =  \pi$}
\label{subsubsec:n0pi}
There are two  optimal solutions: 
\begin{align*}
&x_t = (\sgn x_1)/k(t + E(k) - E(K + t, k)), \quad 
y_t = (s/k) (\sqrt{1-k^2} - \dn(K + t, k)), \\
&\theta_t = s \sgn x_1 (\pi/2 - \am(K + t, k)), \quad s = \pm 1, \qquad t \in [0, t_1], \quad t_1 = 2 K, \\
\intertext{and $k \in (0,1)$ is the root of the equation}
&(2/k) (K(k) - E(k)) = |x_1|,
\end{align*}
see Fig.~\ref{fig:cutC14n}.

\subsubsection{$x_1 = 0$, $y_1 \neq 0$, $\th_1 =  \pi$}
\label{subsubsec:0npi}
There are two  optimal solutions: 
\begin{align*}
&x_t = s(1 - \dn(t,k))/k, \quad 
y_t = (\sgn y_1/k) (t - E(t,k)), \\
&\theta_t = s \sgn y_1   \am( t, k), \quad s = \pm 1, \qquad t \in [0, t_1], \quad t_1 = 2 K, \\
\intertext{and $k \in (0,1)$ is the root of the equation}
&(2/k) (K(k) - E(k)) = |y_1|,
\end{align*}
see Fig.~\ref{fig:cutC15n}.

\subsubsection{$x_1 = 0$, $y_1 \neq 0$, $\th_1 =  0$}
\label{subsubsec:0n0}
There are two  optimal solutions given by formulas for $(x_t, y_t, \th_t)$ for the case $\lam \in C_2$ in Subsec.~3.3~\cite{max_sre2} for the following values of parameters:
\begin{align*}
& t \in [0, t_1], \quad t_1 = 2 k p_1^1(k), \\
\intertext{with the function $p_1^1(k)$ defined in Lemma 5.3 \cite{max_sre2},}
&s_2 = - \sgn y_1, \qquad \psi = \pm K(k) - p_1^1(k),\\
\intertext{and $k \in (0,1)$ is the root of the equation}
&2 (p_1^1(k) - E(p_1^1(k),k) \sqrt{1-k^2}/\dn(p_1^1(k),k)) = |y_1|,
\end{align*}
see Fig.~\ref{fig:sol0n0}.

\onefiglabelsizen
{sol0n0n}{Optimal solutions for $x_1 = 0$, $y_1 \neq 0$, $\theta_1 = 0$}{fig:sol0n0}{5}

\subsubsection{$(x_1,y_1) \neq 0$, $\th_1 =  \pi$}
Introduce the polar coordinates $x_1 = \rho_1 \cos \chi_1$, $y_1 = \rho_1 \sin \chi_1$.
There are two  optimal solutions given by formulas for $(x_t, y_t, \th_t)$ for the case $\lam \in C_1$ in Subsec.~3.3~\cite{max_sre2} for the following values of parameters:
\begin{align*}
& t \in [0, t_1], \quad t_1 = 2 K(k), \\
\intertext{and $k \in (0,1)$ is the root of the equation}
&2 (p_1^1(k) - E(p_1^1(k),k) \sqrt{1-k^2}/\dn(p_1^1(k),k)) = \rho_1,\\
&s_1 = \pm 1, \qquad \f = s_1 F(\pi/2 - \chi_1, k),
\end{align*}
see Figs.~\ref{fig:cutC12n}, \ref{fig:cutC14n}, \ref{fig:cutC15n}. In the cases $y_1 = 0$ and $x_1 = 0$ we get  respectively the cases considered in Subsubsecs.~\ref{subsubsec:n0pi} and \ref{subsubsec:0npi}

\subsubsection{$y_1 \neq 0$, $\th_1 =  0$}
There is a unique  optimal solution given by formulas for $(x_t, y_t, \th_t)$ for the case $\lam \in C_2$ in Subsec.~3.3~\cite{max_sre2} for the following values of parameters:
\begin{align*}
&s_2 = - \sgn y_1, \\
\intertext{$k \in (0,1)$ and $p \in (0, p_1^1(k)]$ are solutions to the system of equations}
&s (\sgn y_1) 2 k f_1(p,k)/\dn(p,k) = x_1, \qquad s = \pm 1, \\
&2 (p - \E(p)) \sqrt{1-k^2}/\dn(p,k) = |y_1|, \\
\intertext{and}
&t \in [0, t_1], \quad t_1 = 2 k p, \qquad \psi = s K(k) - p. 
\end{align*}

\subsection{Plots of caustic, cut locus, and sub-Riemannian spheres}

Here we collect 3-dimensional plots of some essential objects in the sub-Riemannian problem on $\SE(2)$.


Figure~\ref{fig:causR1R2th} shows the sub-Riemannian caustic in the rectifying coordinates $(R_1, R_2, \th)$.

\onefiglabelsizen
{causR1R2thn}{Sub-Riemannian caustic}{fig:causR1R2th}{7}

The cut locus in rectifying coordinates $(R_1, R_2, \theta)$ is presented at Fig.~\ref{fig:max1}, notice that here the horizontal planes $\th = 0$ and $\th = 2 \pi$ should be identified. 
Global embedding of the cut locus to the solid torus (diffeomorphic image of the state space $M = \SE(2)$) is shown at Fig.~\ref{fig:max2}.
 
\onefiglabelsizen
{maxn}{Cut locus in rectifying coordinates $(R_1, R_2, \theta)$}{fig:max1}{10}

\onefiglabelsizen
{maxtorus2n.jpg}{Cut locus:  global view}{fig:max2}{8}


Figures~\ref{fig:sphereRpi2}--\ref{fig:sphereR32pitorus} present sub-Riemannian spheres
$$
S_R = \{ q \in M \mid d(q_0, q) = R\}
$$
of different radii $R$,
where
$$
d(q_0, q_1) = \inf \{ l(q(\cdot)) \mid q(\cdot) \text{ trajectory of \eq{sys1}, } q(0) = q_0, \ q(t_1) = q_1\}
$$
is the sub-Riemannian distance --- the cost function in the sub-Riemannian problem~\eq{sys1}--\eq{J}.

In this problem sub-Riemannian spheres    can be of three different topological classes. If 
$R \in (0, \pi)$, then $S_R$ is homeomorphic to the standard 2-dimensional Euclidean sphere   $S^2$, see Fig.~\ref{fig:sphereRpi2torus}. For
$R = \pi$ the sphere $S_R$ is homeomorphic to    the sphere $S^2$ with its north pole $N$ and south pole $S$ identified:  $S_{\pi} \cong S^2/ \{N = S\}$, see Fig.~\ref{fig:sphereRpitorus}. And if 
$R > \pi$,  then $S_R$ is homeomorphic to the 2-dimensional torus, see Fig.~\ref{fig:sphereR32pitorus}. 

Figure~\ref{fig:sphereRpi2} shows the sphere  $S_{\pi/2}$ in rectifying coordinates $(R_1,R_2,\th)$.
Figure~\ref{fig:sphereRpi2cut} represents the same sphere with a cut-out opening the singularities of the sphere: the sphere intersects itself at the local components of the cut locus $\Cut_{\loc}^\pm$.
Figure~\ref{fig:sphereRpi2torus} shows embedding of the same sphere to the solid torus.
Sub-Riemannian spheres  of small radii resemble the well-known apple-shaped sub-Riemannian sphere in the Heisenberg group~\cite{versh_gersh}. Although, there is a major difference: the sphere in the Heisenberg group has a one-parameter family of symmetries (rotations), but the sphere in $\SE(2)$ has only a discrete group of symmetries $G = \{\Id, \eps^1, \dots, \eps^7\}$ (reflections).

Figures~\ref{fig:sphereRpi}--\ref{fig:sphereRpitorus} represent similarly the sub-Riemannian sphere of the critical radius $\pi$. In addition to self-intersections at the local components $\Cut_{\loc}^\pm$, the sphere $S_{\pi}$ has one self-intersection point at the global component $\Cut_{\glob}$.

Figures~\ref{fig:sphereR32pi}--\ref{fig:sphereR32pitorus} show similar images of the sphere of radius $3 \pi/2$.  For $R > \pi$ the sphere $S_R$ has two topological segments of self-intersection points at $\Cut_{\loc}^\pm$ respectively, and a topological circle of self-intersection points at $\Cut_{\glob}$.
  
Figures~\ref{fig:half1sphere}, \ref{fig:half1sphere} shows intersections of the spheres $S_{\pi/2}$, $S_{\pi}$, $S_{3\pi/2}$ with the half-spaces  $\th < 0$ and $R_2 > 0$ respectively.

Figure~\ref{fig:frontRpi} shows self-intersections of the wavefront
$$
W_R = \{ \Exp(\lam, R) \mid \lam \in C \}
$$
for $R = \pi$.

\twofiglabelsize
{sphereRpi2n.jpg}{Sub-Riemannian sphere $S_{\pi/2}$}{fig:sphereRpi2}{7}
{sphereRpi2cutn.jpg}{Sub-Riemannian  sphere $S_{\pi/2}$ with cut-out  }{fig:sphereRpi2cut}{7}

\onefiglabelsizen
{sphereRpi2torus.jpg}{Sub-Riemannian sphere $S_{\pi/2}$, global view}{fig:sphereRpi2torus}{7}

\twofiglabelsize
{sphereRpin.jpg}{Sub-Riemannian sphere $S_{\pi}$}{fig:sphereRpi}{7}
{sphereRpicutn.jpg}{Sub-Riemannian  sphere $S_{\pi}$ with cut-out  }{fig:sphereRpicut}{7}

\onefiglabelsizen
{sphereRpitorus.jpg}{Sub-Riemannian sphere $S_{\pi}$, global view}{fig:sphereRpitorus}{7} 

\twofiglabelsize
{sphereR32pin.jpg}{Sub-Riemannian sphere $S_{3\pi/2}$}{fig:sphereR32pi}{7}
{sphereR32picutn.jpg}{Sub-Riemannian  sphere $S_{3\pi/2}$ with cut-out  }{fig:sphereR32picut}{7}

\onefiglabelsizen
{sphereR32pitorus.jpg}{Sub-Riemannian sphere $S_{3\pi/2}$, global view}{fig:sphereR32pitorus}{7}

\onefiglabelsizen
{half1sphere.jpg}{Sub-Riemannian hemi-spheres $S_{\pi/2}$, $S_{\pi}$, $S_{3\pi/2}$ for  $\th < 0$}{fig:half1sphere}{7}

\onefiglabelsizen
{half2sphere.jpg}{Sub-Riemannian hemi-spheres $S_{\pi/2}$, $S_{\pi}$, $S_{3\pi/2}$ for  $R_2 > 0$}{fig:half2sphere}{7}

\onefiglabelsizen
{frontRpi.jpg}{Wavefront $W_{\pi}$ with cut-out }{fig:frontRpi}{5}


\section{Conclusion}
\label{sec:concl}

The solution to the sub-Riemannian problem on $\SE(2)$ obtained in this paper and the previous one~\cite{max_sre2} is based on a detailed study of the action of the discrete group of symmetries of the exponential mapping. This techniques was already partially developed in the study of   related optimal control problems (nilpotent sub-Riemannian problem with the growth vector (2,3,5)~\cite{dido_exp, max1, max2, max3} and Euler's elastic problem~\cite{el_max, el_conj}). The sub-Riemannian problem on $\SE(2)$ is the first problem in this series, where a complete solution was obtained (local and global optimality, cut time and cut locus, optimal synthesis). We believe that our approach based on the study of symmetries will provide such complete results in other symmetric invariant problems, such as the nilpotent sub-Riemannian problem with the growth vector (2,3,5),  the ball-plate problem~\cite{jurd_book}, and others. 

The sub-Riemannian problem on the group of rototranslations $\SE(2)$ appeared recently as an important model in robotics~\cite{laumond} and vision~\cite{petitot, citti}. We expect that our results   can be applied to these domains.

\section*{Acknowledgements}

The author is grateful to Prof. Andrei Agrachev for bringing the sub-Riemannian problem on $\SE(2)$ to authors' attention.

The author wishes to thank Prof. Arrigo Cellina for hospitality and excellent conditions for work at the final version of this paper.

\newpage
\listoffigures\addcontentsline{toc}{section}{\listfigurename}

\listoftables\addcontentsline{toc}{section}{\listtablename}

\newpage
\addcontentsline{toc}{section}{\refname}


\begin{thebibliography}{99}
 
 \bibitem{agrachev_contact}
A.A.~Agrachev,
Exponential mappings for contact sub-Rie\-man\-ni\-an struc\-tures
{\em Journal Dyn. and Control Systems}. Vol.~2 (1996). No.~3,  pp. 321--358.


\bibitem{cime}
A.A.~Agrachev,  
Geometry of optimal control problems and Hamiltonian systems. In: Nonlinear and Optimal Control Theory,  Lecture Notes in Mathematics. CIME, 1932, Springer Verlag,  2008, 1--59.

 
 \bibitem{notes}
A.A.~Agrachev, Yu. L. Sachkov,  
{\em Control Theory from the Geometric Viewpoint},
Springer-Verlag, Berlin 2004.

\bibitem{gauthier_contact}
C.~El-Alaoui, J.P.~Gauthier, I.~Kupka,
Small sub-Riemannian balls on $\R^3$,
{\em Journal Dyn. and Control Systems} 2(3), 1996, 359--421.

\bibitem{citti}
G.~Citti, A.~Sarti,
A cortical based model of perceptual completion in the roto-translation space,
{\em J. Math. Imaging Vis.} 24: 307--326, 2006. 

\bibitem{jurd_book}
V.~Jurdjevic,
{\em Geometric Control Theory},
Cambridge University Press, 1997.

\bibitem{laumond}
J.P. Laumond, Nonholonomic motion planning for mobile robots,
{\em Lecture notes in Control and Information Sciences, 229}. Springer, 1998. 

\bibitem{max_sre2}
I. Moiseev, Yu. L. Sachkov,
Maxwell strata in sub-Riemannian problem on the group of motions of a plane,
{\em ESAIM: COCV}, accepted, available at arXiv:0807.4731v1, 29 July 2008.

\bibitem{petitot}
J.Petitot, The neurogeometry
	of pinwheels as a sub-Riemannian contact structure, 
	{\em J. Physiology - Paris} 97 (2003), 265--309.

 \bibitem{PBGM}
L.S.~Pontryagin, V.G.~Boltyanskii, R.V.~Gamkrelidze, E.F.~Mishchenko, 
{\em The mathematical theory of optimal processes}, Wiley Interscience, 1962. 

\bibitem{dido_exp}
Yu. L. Sachkov, Exponential mapping in generalized Dido's problem, {\em Mat. Sbornik},  194  	(2003),  9:  63--90 (in Russian).
English translation in: {\em Sbornik: Mathematics}, {\bf 194} (2003).

\bibitem{max1}
Yu. L. Sachkov, Discrete symmetries in the generalized Dido problem (in Russian),
{\em Matem. Sbornik},
{\bf 197}  (2006),  2:  95--116.
English translation in: {\em Sbornik: Mathematics}, {\bf 197} (2006), 2: 235--257.

\bibitem{max2}
Yu. L. Sachkov, The Maxwell set in the generalized Dido problem (in Russian),
{\em Matem. Sbornik},
{\bf 197}  (2006),  4:  123--150.
English translation in: {\em Sbornik: Mathematics}, {\bf 197} (2006), 4: 595--621.

\bibitem{max3}
Yu. L. Sachkov, Complete description of the Maxwell strata in the generalized Dido problem (in Russian),
{\em Matem. Sbornik},
{\bf 197}  (2006),  6:  111--160.
English translation in: {\em Sbornik: Mathematics}, {\bf 197} (2006), 6: 901--950.

\bibitem{el_max}
Yu. L. Sachkov,
Maxwell strata in Euler's elastic problem,
Journal of Dynamical and Control Systems,
Vol. 14 (2008), No. 2  (April), pp. 169--234. 

\bibitem{el_conj}
Yu. L. Sachkov,
Conjugate points in Euler's elastic problem,
{\em Journal of Dynamical and Control Systems}, vol. 14 (2008), No. 3 (July).


\bibitem{sar}
A.V. Sarychev,
The index of second variation of a control system, Matem. Sbornik 113 (1980), 464--486. English transl.  in: Math. USSR Sbornik 41 (1982), 383--401.

\bibitem{versh_gersh}
A.M.~Vershik, V.Y.~Gershkovich, Nonholonomic Dynamical Systems. Geometry of
distributions and variational problems. (Russian) In: {\em Itogi Nauki i Tekhniki:
Sovremennye Problemy Matematiki, Fundamental'nyje Napravleniya}, Vol.~16, VINITI,
Moscow, 1987, 5--85. (English translation in: {\em Encyclopedia of Math. Sci.} {\bf
16}, Dynamical Systems 7, Springer Verlag.)


\bibitem{whit_watson}
E.T.~Whittaker, G.N.~Watson,
{\em 
A Course of Modern Analysis. An introduction to the general theory of infinite processes and of analytic functions; with an account of principal transcendental functions},
Cambridge University Press, Cambridge 1996.

\bibitem{math}
S. Wolfram,
{\em Mathematica: a system for doing mathematics by computer},
Addison-Wesley, Reading,  MA 1991.

\end{thebibliography}
\end{document}